\newtheorem{theorem}{Theorem}[section]
\newtheorem{thm}[theorem]{Theorem}
\newtheorem{prop}[theorem]{Proposition}
\newtheorem{lem}[theorem]{Lemma}
\newtheorem{cor}[theorem]{Corollary}
\makeatletter \@addtoreset{equation}{section}
\newcommand{\qbinom}[2]{\genfrac{[}{]}{0pt}{}{#1}{#2}}
\DeclareMathOperator*{\CT}{CT}
\begin{document}

\title[]{A $q$-Morris constant term identity for the Lie algebra $A_n$ and its symmetric function generalizations}

\author{Yue Zhou}

\address{School of Mathematics and Statistics, HNP-LAMA, Central South University,
Changsha 410083, P.R. China}

\email{zhouyue@csu.edu.cn}

\subjclass[2010]{05A30, 33D70, 05E05}

\date{March 12, 2023}

\begin{abstract}
\noindent
It is well-known that the Selberg integral is equivalent to the Morris constant term identity.
In 2009 Warnaar obtained the Selberg integral for the Lie algebra $A_n$.
In this paper, from the point view of constant term identities, we obtain a $q$-Morris constant term identity of type $A_n$ and its several symmetric function generalizations. The type $A_n$ $q$-Morris identity looks as if a constant term version of Warnaar's $A_n$ Selberg integral to some extend.

\noindent
\textbf{Keywords:} constant term identity; $q$-Morris identity; symmetric function; Selberg integral;
Warnaar's $A_n$ Selberg integral.
\end{abstract}
\maketitle

\section{Introduction}\label{s-intr}

In 1944, Atle Selberg \cite{Selberg} gave the following remarkable multiple integral:
\begin{multline}\label{e-Selberg}
\int_{0}^1\cdots \int_{0}^1\prod_{i=1}^kz_{i}^{\alpha-1}(1-z_i)^{\beta-1}\prod_{1\leq i<j\leq k}
|z_i-z_j|^{2\gamma}\mathrm{d}z_{1}\cdots \mathrm{d}z_{k} \\
=\prod_{j=0}^{k-1}\frac{\Gamma(\alpha+j\gamma)\Gamma(\beta+j\gamma)
\Gamma\big(1+(j+1)\gamma\big)}{\Gamma\big(\alpha+\beta+(k+j-1)\gamma\big)\Gamma(1+\gamma)},
\end{multline}
where $\alpha, \beta, \gamma$ are complex parameters such that
\[
\mathrm{Re}(\alpha)>0, \quad \mathrm{Re}(\beta)>0,\quad
\mathrm{Re}(\gamma)>-\min\{1/n,\mathrm{Re}(\alpha)/(n-1),\mathrm{Re}(\beta)/(n-1)\}.
\]
When $k=1$, the above Selberg integral reduces to the Euler beta integral.

It is well-known that the Selberg integral is equivalent to the Morris constant term identity \cite{Morris1982}
\begin{equation}\label{Morris}
\CT_{z_0,\dots,z_k} \prod_{i=1}^{k}(1-z_0/z_i)^a(1-z_i/z_0)^b
\prod_{1\leq i\neq j\leq k}(1-z_i/z_j)^c
=\prod_{i=0}^{k-1}\frac{(a+b+ic)!\big((i+1)c\big)!}{(a+ic)!(b+ic)!c!}
\end{equation}
for nonnegative integers $a,b,c$,
where $\CT\limits_{z_i} L(z_i)$ denotes taking the constant term of the Laurent polynomial (series) $L(z_i)$.
Note that we can set $z_0=1$ in \eqref{Morris} since we take the constant term of a homogeneous Laurent polynomial.
In his Ph.D. thesis \cite{Morris1982}, Morris also conjectured the following
$q$-analogue constant term identity
\begin{equation}\label{q-Morris}
\CT_{z_0,\dots,z_k} \prod_{i=1}^{k}(z_0/z_i)_a(qz_i/z_0)_b
\prod_{1\leq i<j\leq k}(z_i/z_j)_c(qz_j/z_i)_c
=\prod_{i=0}^{k-1}\frac{(q)_{a+b+ic}(q)_{(i+1)c}}{(q)_{a+ic}(q)_{b+ic}(q)_{c}},
\end{equation}
where $(y)_c=(y;q)_c:=(1-y)(1-yq)\cdots (1-yq^{c-1})$ is the $q$-factorial for a positive integer $c$ and $(y)_0:=1$.
In 1988,  Habsieger \cite{Habsieger} and Kadell \cite{Kad} independently proved Askey's conjectured
$q$-analogue of the Selberg integral \cite{Askey}.
Expressing their $q$-analogue integral as a constant term identity
they thus proved Morris' $q$-constant term conjecture \eqref{q-Morris}.
Now the constant term identity \eqref{q-Morris} is called the Habsieger--Kadell $q$-Morris identity.

For nonnegative integers $k_1,k_2,\dots,k_n$, we write
\[
z^{(s)}=(z_1^{(s)},\dots,z_{k_s}^{(s)})
\]
and $z=(z^{(1)},\dots,z^{(n)})$. We also make use of the Vandermonde-type products
for alphabets $x=(x_1,\dots,x_l)$ and $y=(y_1,\dots,y_m)$, which are given by
\[
\Delta(x)=\prod_{1\leq i<j\leq l}(x_i-x_j),\qquad \Delta(x,y)=\prod_{i=1}^l\prod_{j=1}^m(x_i-y_j).
\]
In 2009, based on \cite{TV}, Warnaar \cite{War09} obtained a Selberg integral for the Lie algebra $A_n$, see Theorem~\ref{thm-Warnaar} below. Then the original Selberg integral \eqref{e-Selberg} corresponds to the $A_1$ case, and Tarasov and Varchenko's result \cite{TV} is the $A_2$ case.
\begin{thm}\label{thm-Warnaar}
For a positive integer $n$ let $k_1\geq k_2\geq \dots\geq k_n\geq 0$
be integers and $k_0=k_{n+1}=0$. Let $\alpha,\beta_1,\dots,\beta_n,\gamma\in \mathbb{C}$
be such that
\[
\mathrm{Re}(\alpha)>0,\quad \mathrm{Re}(\beta_1)>0,\dots,\mathrm{Re}(\beta_n)>0,
\quad -\min\Big\{\frac{\mathrm{Re}(\alpha)}{k_1-1},\frac{1}{k_1}\Big\}<\mathrm{Re}(\gamma)<\frac{1}{k_1}
\]
and
\[
-\frac{\mathrm{Re}(\beta_s)}{k_s-k_{s+1}-1}<\mathrm{Re}(\gamma)<\frac{\beta_r+\dots+\beta_{s}}{s-r}
\]
for $1\leq r\leq s\leq n$.
Then
\begin{align}\label{e-Warnaar}
&\int_{C_{\gamma}^{k_n,\dots,k_1}[0,1]}\prod_{s=1}^n\Big(|\Delta(z^{(s)})|^{2\gamma}
\prod_{i=1}^{k_s}(z_i^{(s)})^{\alpha_s-1}(1-z_i^{(s)})^{\beta_s-1}\Big)
\prod_{s=1}^{n-1}|\Delta(z^{(s+1)},z^{(s)})|^{-\gamma}\mathrm{d}z\\
&\qquad = \prod_{1\leq r\leq s\leq n}\prod_{i=1}^{k_s-k_{s+1}}
\frac{\Gamma\big(\beta_r+\dots+\beta_s+(i+r-s-1)\gamma\big)}
{\Gamma\big(\alpha_r+\beta_r+\dots+\beta_s+(i+r-s+k_r-k_{r-1}-2)\gamma\big)}\nonumber \\
&\qquad \quad \times
\prod_{s=1}^n\prod_{i=1}^{k_s}\frac{\Gamma\big(\alpha_s+(i-k_{s-1}-1)\gamma\big)\Gamma(i\gamma)}
{\Gamma(\gamma)},\nonumber
\end{align}
where $\alpha_1=\alpha$, $\alpha_2=\dots=\alpha_{n}=1$ and $\mathrm{d}z=\mathrm{d}z^{(1)}\dots \mathrm{d}z^{(n)}$.
\end{thm}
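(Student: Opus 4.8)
The plan is to establish \eqref{e-Warnaar} by induction on $n$, with the classical Selberg integral \eqref{e-Selberg} serving as the base case $n=1$ and the Tarasov--Varchenko evaluation \cite{TV} as the case $n=2$. I would first prove the identity on an open subset of the stated parameter region in which every integral occurring in the argument converges absolutely, and then pass to the full region by analytic continuation; this is legitimate because the right-hand side is a ratio of Gamma functions, analytic in $(\alpha,\beta_1,\dots,\beta_n,\gamma)\in\CC^{n+2}$ away from its poles, while the left-hand side is a locally uniformly convergent integral and hence analytic there as well.

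For the inductive step I would integrate out the innermost alphabet $z^{(n)}=(z_1^{(n)},\dots,z_{k_n}^{(n)})$, which has the fewest variables and, because $\alpha_n=1$, carries no factor $(z_i^{(n)})^{\alpha_n-1}$. With $z^{(1)},\dots,z^{(n-1)}$ held fixed, the entire $z^{(n)}$-dependence of the integrand is concentrated in
\begin{equation}
J=\int |\Delta(z^{(n)})|^{2\gamma}\prod_{i=1}^{k_n}(1-z_i^{(n)})^{\beta_n-1}\prod_{i=1}^{k_n}\prod_{j=1}^{k_{n-1}}|z_i^{(n)}-z_j^{(n-1)}|^{-\gamma}\,\mathrm{d}z^{(n)},
\end{equation}
where the domain of $J$ is the slice of $C_{\gamma}^{k_n,\dots,k_1}[0,1]$ cut out by the fixed values of $z^{(n-1)}$. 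By the construction of the nested configuration space this slice is precisely the region in which the $z^{(n)}$ interlace the points $z^{(n-1)}$ and the endpoints $0,1$, so $J$ is a Selberg-type integral carrying the $k_{n-1}$ ``source'' points $z^{(n-1)}_j$, each weighted by the exponent $-\gamma$ that matches the self-interaction power $2\gamma$ of the $z^{(n)}$.

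The crux is to evaluate $J$ in closed form. I expect its value to factor as a product of Gamma functions --- supplying exactly the $s=n$ block on the right-hand side of \eqref{e-Warnaar}, built from the partial sums $\beta_r+\dots+\beta_n$ --- times a weight in the $z^{(n-1)}$ that can be reabsorbed into the level-$(n-1)$ part of the integrand. Concretely, the coupling $\prod_{i,j}|z_i^{(n)}-z_j^{(n-1)}|^{-\gamma}$ should be traded, after evaluation, for a modification of the $\beta_{n-1}$-weight on $z^{(n-1)}$ (merging $\beta_n$ into the partial sums and correcting the product ranges). The natural tools for producing such an evaluation are the Dixon--Anderson integral and the Selberg integral themselves, chained in the manner of Anderson's derivation of \eqref{e-Selberg}: one integrates the interlacing variables and records the resulting pairwise products. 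Once $J$ is evaluated, the leftover integral over $z^{(1)},\dots,z^{(n-1)}$ is again of the shape \eqref{e-Warnaar} with $n$ replaced by $n-1$ and the $\beta$-data contracted, and its value is known by the induction hypothesis; combining the two and simplifying through the functional equation $\Gamma(x+1)=x\Gamma(x)$ should telescope to the stated right-hand side.

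I expect the evaluation of $J$ to be the main obstacle. For general $\gamma$ it is a genuine Selberg integral with interlacing constraints and negative source exponents, not a direct special case of either \eqref{e-Selberg} or the Dixon--Anderson integral, so establishing its closed form will likely require a secondary induction (for instance on $k_{n-1}-k_n$, integrating the extra variables one at a time) or an appeal to a known hypergeometric evaluation, together with careful control of the interlacing domain. The remaining difficulties are bookkeeping but delicate: one must verify that slicing and then integrating $C_{\gamma}^{k_n,\dots,k_1}[0,1]$ returns exactly $C_{\gamma}^{k_{n-1},\dots,k_1}[0,1]$, that the contraction $\beta_{n-1}\mapsto\beta_{n-1}+\beta_n$ preserves the parameter inequalities in the statement, and that the product ranges of the form $\prod_{i=1}^{k_s-k_{s+1}}$ match after the reduction. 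These index reconciliations, rather than any single integral, are what make the theorem genuinely hard.
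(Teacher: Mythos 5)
Your proposal cannot be checked against the paper, because the paper contains no proof of Theorem~\ref{thm-Warnaar}: it is quoted verbatim as background from Warnaar \cite{War09} (the author even declines to define the integration domain, referring to \cite[(4.9)]{War09} for it). So the only meaningful comparison is with Warnaar's original argument, whose outermost skeleton --- a recursion in the rank $n$, with \eqref{e-Selberg} as the $A_1$ case and Tarasov--Varchenko \cite{TV} as the $A_2$ case --- your plan does echo.

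Within your plan itself there is a genuine gap, and it sits exactly where the content of the theorem lies. Everything hinges on your expectation that the inner integral $J$, a Selberg-type integral with $k_{n-1}$ source points $z_j^{(n-1)}$ coupled through $|z_i^{(n)}-z_j^{(n-1)}|^{-\gamma}$, evaluates for \emph{fixed generic} $z^{(n-1)}$ as a product of Gamma factors times a weight reabsorbable into level $n-1$. No such evaluation exists: for generic source configurations such an integral is a genuinely transcendental function of the $z_j^{(n-1)}$ (it has Jack-polynomial hypergeometric expansions, not product formulas), and if it did factor one could deduce the $A_n$ integral immediately from Selberg's, which would make inexplicable both the sixty-five-year gap and the forced specialization $\alpha_2=\dots=\alpha_n=1$ in the statement. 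Moreover, even in regimes where Dixon--Anderson-type formulas do apply, their output contains pairwise factors among the source points, so the contraction would perturb the exponent of $|\Delta(z^{(n-1)})|^{2\gamma}$; the residual integral is then \emph{not} of the shape \eqref{e-Warnaar} at rank $n-1$, and the induction does not close as described. A further unfounded step is your treatment of the domain: $C_{\gamma}^{k_n,\dots,k_1}[0,1]$ is a $\gamma$-dependent complex (deformed) domain, not a real region, so its slices at fixed $z^{(n-1)}$ are not the real interlacing sets you assume. You do flag the evaluation of $J$ as ``the main obstacle,'' to be handled by a secondary induction or a known evaluation, but what is deferred there is not a technical lemma --- it is the theorem itself.
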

We omit the explicit expression for the integration domain $C_{\gamma}^{k_n,\dots,k_1}[0,1]$ due to its complexity. The reader who is interested can refer \cite[(4.9)]{War09}. Recently, Albion, Rains and Warnaar \cite{ARW} generalized the above integral to an AFLT type by adding a product of two Jack polynomials.

For nonnegative integers $a,b,k_1,k_2$ and a positive integer $c$, let
\begin{equation}\label{defi-L}
L_{k_1,k_2}(a,b,c;z^{(1)},z^{(2)})=
\frac{\prod_{i=1}^{k_1}(z_0/z_i^{(1)})_a(qz_i^{(1)}/z_0)_b
\prod_{1\leq i<j\leq k_1}(z_i^{(1)}/z_j^{(1)})_c(qz_j^{(1)}/z_i^{(1)})_c}
{\prod_{j=1}^{k_2}\prod_{i=1}^{k_1}(z_i^{(1)}/z_j^{(2)})_c}.
\end{equation}
Denote
\begin{equation}\label{defi-L1}
L_{k_1,k_2}(a,b,c):=\CT_{z^{(1)}}L_{k_1,k_2}(a,b,c;z^{(1)},z^{(2)}),
\end{equation}
where we suppress the variables $z^{(2)}$ in the left-hand side.
When taking constant terms, we always explain $(1-dz_i^{(s)}/z_j^{(s+1)})^{-1}$ as
\[
\sum_{i=0}^{\infty}(dz_i^{(s)}/z_j^{(s+1)})^i
\]
for $d\in \mathbb{C}(q)$ and $s=1,\dots,n$ throughout this paper.
That is, we assume all terms of the form $|dz_i^{(s)}/z_j^{(s+1)}|<1$.
In \eqref{defi-L1}, we need the explanation of $(1-dz_i^{(1)}/z_j^{(2)})^{-1}$ to obtain
the constant term.
By convention, we define the product of zero term as 1. Then
\begin{equation*}
L_{k_1,0}(a,b,c)=\CT_{z^{(1)}}\prod_{i=1}^{k_1}(z_0/z_i^{(1)})_a(qz_i^{(1)}/z_0)_b
\prod_{1\leq i<j\leq k_1}(z_i^{(1)}/z_j^{(1)})_c(qz_j^{(1)}/z_i^{(1)})_c
\end{equation*}
equals
\begin{equation}\label{defi-M}
M_{k_1}(a,b,c)=\prod_{i=0}^{k_1-1}\frac{(q)_{a+b+ic}(q)_{(i+1)c}}{(q)_{a+ic}(q)_{b+ic}(q)_{c}}
\end{equation}
by the $q$-Morris identity \eqref{q-Morris}.

For $n$ a positive integer,
let $k_1,\dots,k_{n+1}$, $a_1,\dots,a_n$ and $b_1,\dots,b_n$ be nonnegative integers such that $k_1\geq k_2\geq \cdots \geq k_{n+1}$, $a_1=a,a_2=\cdots=a_n=0$,
and denote $\sigma_s=b_1+\cdots+b_s$ for $s=1,\dots,n$ and $\sigma_0=0$.
In this paper, we find that
\begin{thm}\label{thm-0}
If $a+\sigma_s+s\geq sc$ for $s=1,\dots,n$ and $k_{n+1}=0$, then
\begin{multline}\label{e-MorrisA}
\CT_{z}
\prod_{s=1}^{n}\frac{\prod_{i=1}^{k_s}(1/z_i^{(s)})_{a_s}(qz_i^{(s)})_{b_s}
\prod_{1\leq i<j\leq k_s}(z_i^{(s)}/z_j^{(s)})_c(qz_j^{(s)}/z_i^{(s)})_c}
{\prod_{i=1}^{k_s}(z_i^{(s)})^{b_s+1-c}\prod_{j=1}^{k_{s+1}}\prod_{i=1}^{k_s}(q^{b_s+1-c}z_i^{(s)}/z_j^{(s+1)})_c}\\
\quad=(-1)^{\sum_{s=1}^nk_s(b_s+1-c)}q^{\sum_{s=1}^nk_s\binom{b_s+2-c}{2}}
 \prod_{s=1}^nM_{k_s}\big(a+\sigma_s+s(1-c),c-1,c\big).
\end{multline}
\end{thm}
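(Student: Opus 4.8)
The plan is to evaluate the constant term one alphabet at a time, from $s=1$ upward, running an induction on $n$. What makes this feasible is that $z^{(1)}$ occurs \emph{only} in the $s=1$ factor of the product in \eqref{e-MorrisA}, so one may legitimately compute $\CT_{z^{(1)}}$ first and treat $z^{(2)},\dots,z^{(n)}$ as external variables. Up to the monomial $\prod_i(z_i^{(1)})^{c-1-b_1}$ and the rescaling $z_j^{(2)}\mapsto q^{c-1-b_1}z_j^{(2)}$, the $s=1$ factor is exactly the integrand $L_{k_1,k_2}(a,b_1,c;z^{(1)},z^{(2)})$ of \eqref{defi-L}, so the whole argument rests on a single evaluation of a monomial-twisted $q$-Morris constant term carrying an extra denominator that couples to external variables.

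I would isolate this as the key lemma: writing $\Phi$ for the $s=1$ factor,
\[
\CT_{z^{(1)}}\Phi=(-1)^{k_1(b_1+1-c)}q^{k_1\binom{b_1+2-c}{2}}\,M_{k_1}\big(a+b_1+1-c,\,c-1,\,c\big)\prod_{j=1}^{k_2}(1/z_j^{(2)})_{a+b_1+1-c}.
\]
Two features deserve emphasis. First, the twist $\prod_i(z_i^{(1)})^{c-1-b_1}$ turns the plain constant term into the extraction of the coefficient of $\prod_i(z_i^{(1)})^{b_1+1-c}$; already in rank one ($k_1=1$, $k_2=0$) this is the coefficient of $z^{\,b_1+1-c}$ in $(1/z)_a(qz)_{b_1}$, which the $q$-binomial theorem and one $q$-Chu--Vandermonde summation evaluate to $(-1)^{b_1+1-c}q^{\binom{b_1+2-c}{2}}\qbinom{a+b_1}{c-1}=(-1)^{b_1+1-c}q^{\binom{b_1+2-c}{2}}M_1(a+b_1+1-c,c-1,c)$. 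This explains both the collapse of the middle parameter from $b_1$ to $c-1$ and the exact sign and power of $q$. Second, the shift $q^{b_1+1-c}$ deliberately placed in the coupling denominator of \eqref{e-MorrisA} is precisely what forces the emergent external product to be \emph{anchored at $z_0=1$}, i.e.\ to be $\prod_j(1/z_j^{(2)})_{a+b_1+1-c}$ with no residual $q$-power; this is the mechanism that lets the induction close.

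Granting the key lemma, the inductive step is pure bookkeeping. Since $a_2=\dots=a_n=0$, the emergent factor $\prod_j(1/z_j^{(2)})_{a+b_1+1-c}$ combines with the $s=2$ numerator $\prod_i(qz_i^{(2)})_{b_2}$ to produce exactly $\prod_i(1/z_i^{(2)})_{a'}(qz_i^{(2)})_{b_2}$ with $a'=a+b_1+1-c$. Thus $\CT_{z^{(1)}}$ peels off the $s=1$ factor of the right-hand side of \eqref{e-MorrisA} and leaves precisely the $(n-1)$-level instance of \eqref{e-MorrisA} in the alphabets $z^{(2)},\dots,z^{(n)}$, with $n\mapsto n-1$, $k_t\mapsto k_{t+1}$, $b_t\mapsto b_{t+1}$ and $a\mapsto a'$. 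A short computation confirms that the parameters agree: for $s\ge2$ the reduced argument $a'+(\sigma_s-b_1)+(s-1)(1-c)$ equals the original $a+\sigma_s+s(1-c)$, the per-level sign and $q$-power factors telescope to the claimed prefactor, and the hypothesis $a+\sigma_s+s\ge sc$ for $s\ge2$ is exactly the hypothesis of the reduced problem, while the $s=1$ case guarantees $a+b_1+1-c\ge0$, so that $M_{k_1}(a+b_1+1-c,c-1,c)$ is a legitimate $q$-Morris evaluation. The base case $n=1$ (where $k_2=0$) is the key lemma with no coupling.

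The main obstacle is the key lemma itself, specifically the coupling $\prod_{j,i}(q^{b_1+1-c}z_i^{(1)}/z_j^{(2)})_c^{-1}$. Unlike the pure $q$-Morris setting of \eqref{q-Morris}, here the constant term in $z^{(1)}$ must retain a nontrivial dependence on $z^{(2)}$, and one must show that the cross terms generated by expanding these inverse $q$-factorials (by the series convention fixed after \eqref{defi-L1}) collapse to the single anchored product displayed above. I would attack this either by induction on $k_2$, adjoining one external variable $z_{k_2}^{(2)}$ at a time and reducing the extra contribution to the $k_2-1$ case, or by expanding the full coupling as a multiple series, evaluating each term by the $q$-Morris identity \eqref{q-Morris} twisted by the symmetric factor $\prod_i(z_i^{(1)})^{c-1-b_1}$, and resumming. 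This monomial-insertion version of $q$-Morris is the genuine technical heart, and it is also the natural entry point to the symmetric-function generalizations promised in the title and to the AFLT-type refinement of \cite{ARW}.
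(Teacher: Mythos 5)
Your global strategy coincides with the paper's own route: Theorem~\ref{thm-0} is precisely the $m_s=0$, $k_{n+1}=0$ case of Theorem~\ref{thm-main-1}, and the paper obtains it exactly as you propose, by peeling off one alphabet at a time using Lemma~\ref{lem-main-2} specialized to $z_0=1$ --- which is, word for word, your ``key lemma'' (the general-$z_0$ version is equivalent to the $z_0=1$ version by homogeneity of the integrand). Your bookkeeping for the iteration (the shift $a\mapsto a+b_1+1-c$, the telescoping of the sign and the power of $q$, and the match of $a+\sigma_s+s\geq sc$ with the hypothesis of the reduced problem) is correct, and your rank-one verification is consistent. The problem is that what you have actually established is only the easy part: the key lemma is not an auxiliary step but the entire technical content of the paper, occupying Sections~\ref{s-split} through~\ref{sec-equivalent}, and your proposal leaves it unproven, offering only two sketched lines of attack.

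Neither sketch closes the gap. The induction on $k_2$ is circular: expanding the last coupling factor by $\prod_{i=1}^{k_1}\prod_{m=0}^{c-1}(1-q^{m}x z_i^{(1)}/w)^{-1}=\sum_{r\geq 0}g_r(xz^{(1)};q,q^c)w^{-r}$ converts the key lemma at $(k_1,k_2)$ into the $g_r$-inserted identity at $(k_1,k_2-1)$; but that inserted identity (Proposition~\ref{prop-1}, in the form \eqref{e-h-2}, and its twisted version \eqref{e-lastsec-1}) is itself derived in the paper from the key lemma at $(k_1,k_2)$ by exactly the reverse manipulation, namely realizing each $g_r$ insertion by adjoining one more external variable. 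So this route merely restates the problem. As for the expand-and-resum route, the individual monomial-twisted $q$-Morris constant terms produced by expanding the coupling admit no product evaluations, and the paper never evaluates them; what it proves instead, via the partial-fraction splitting formula (Proposition~\ref{prop-split}) and an induction on $k_1$, is that the relevant combinations \emph{vanish} (Lemma~\ref{lem-vanish}, Corollaries~\ref{cor-1} and~\ref{cor-2}). The missing idea is what those vanishing results are used for: they show that $L_{k_1,k_2}(a,b,c)$ is a polynomial in $z_0$ of degree at most $k_2a$ (Lemmas~\ref{lem-L1} and~\ref{lem-L2}) which vanishes at the $k_2(a+b+1-c)$ points $z_0=q^{m}z_l^{(2)}$, $m=1-a,\dots,b+1-c$ (Lemma~\ref{lem-L3}), and whose $z_0$-constant term is $M_{k_1}(a,b,c)$ by the plain $q$-Morris identity \eqref{q-Morris}; this forces the form \eqref{e-main}, and at $b=c-1$ the degree count leaves no undetermined polynomial factor, yielding Lemma~\ref{lem-main-1}, from which Lemma~\ref{lem-main-2} follows by the substitution $a\mapsto a+b+1-c$ and a rescaling of variables. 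Without this characterization-by-degree-and-roots argument (or a genuine substitute for it), your proof does not get off the ground.
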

If readers compare the similarity of \eqref{e-MorrisA} and \eqref{e-Warnaar}
with the equivalence between the Morris identity \eqref{Morris} and the Selberg integral \eqref{e-Selberg},
 they may find that the constant term identity \eqref{e-MorrisA} looks as if a constant term version of Warnaar'a $A_n$ Selberg integral \eqref{e-Warnaar}.
But we do not know how to connect them so far.
Note that in this paper, we obtain more general results than Theorem~\ref{thm-0}
by adding symmetric functions, see Theorem~\ref{thm-main-1} and Theorem~\ref{thm-main-2} below.

The structure of this paper is as follows. In the next section, we introduce some basic notation and
results of symmetric functions and plethystic notation. In Section~\ref{s-split}, we introduce the main tool of this paper --- a splitting formula. Using the formula, we find a family of vanishing constant terms.
In Section~\ref{sec-expression},  we determine the expression of $L_{k_1,k_2}(a,b,c)$.
In Section~\ref{sec-bc1}, we study the $b=c-1$ case of $L_{k_1,k_2}(a,b,c)$.
In the last section, we obtain several results equivalent to those in Section~\ref{sec-bc1}.

\section{Symmetric functions and plethystic notation}

In this section, we introduce some basic results of symmetric functions and plethystic notation.

A partition is a sequence $\lambda={(\lambda_1,\lambda_2,\dots)}$ of nonnegative integers such that
${\lambda_1\geq \lambda_2\geq \cdots}$ and
only finitely many $\lambda_i$ are positive.
The length of a partition $\lambda$, denoted
$\ell(\lambda)$ is defined to be the number of nonzero $\lambda_i$.
We adopt the convention of not displaying the tail of zeros of a partition.
We say that $|\lambda|=\lambda_1+\lambda_2+\cdots$ is the size of the partition $\lambda$.
The most commonly used partial order for partitions should be the dominance order, denoted $\leq$.
If $\lambda$ and $\mu$ are partitions of the same size, then $\lambda\leq \mu$ if
$\lambda_1+\cdots+\lambda_i\leq \mu_1+\cdots+\mu_i$ for all $i\geq 1$.

For a field $\mathbb{F}$, denote $\Lambda_{\mathbb{F}}$ by the ring of symmetric functions with coefficients in $\mathbb{F}$.
In the following, we introduce four bases of $\Lambda$ (with $\mathbb{F}=\mathbb{Q}$ and $\mathbb{F}=\mathbb{Q}(q,t)$).

Let $X=\{x_1,x_2,\dots\}$ be an alphabet (a set of countably many variables).
For $r$ a positive integer, let $p_r$ be the power sum
symmetric function in $X$, defined by
\[
p_r=\sum_{i\geq 1}x_i^r.
\]
In addition, we set $p_0=1$.
For a partition $\lambda=(\lambda_1,\lambda_2,\dots)$, let
\[
p_{\lambda}=p_{\lambda_1}p_{\lambda_2}\cdots.
\]
The $p_r$ are algebraically independent over $\mathbb{Q}$,
and the $p_{\lambda}$ form a basis of $\Lambda_{\mathbb{Q}}$ \cite{Mac95}. That is,
\[
\Lambda_{\mathbb{Q}}=\mathbb{Q}[p_1,p_2,\dots].
\]

The elementary symmetric function is defined by
\[
e_r=\sum_{1\leq i_1<\dots<i_r}x_{i_1}\cdots x_{i_r}, \quad e_0=1,
\quad \mbox{and} \quad
e_{\lambda}=e_{\lambda_1}e_{\lambda_2}\cdots.
\]
The $e_{\lambda}$ form a basis of $\Lambda_{\mathbb{Q}}$ \cite{Mac95}.

The complete symmetric function $h_r(X)$
can be defined in terms of its generating function as
\begin{equation}\label{e-gfcomplete}
\sum_{r\geq 0}  h_r(X)y^r=\prod_{i\geq 0}\frac{1}{1-yx_i}.
\end{equation}
Like $p_{\lambda}$ and $e_{\lambda}$, define
\[
h_{\lambda}=h_{\lambda_1}h_{\lambda_2}\cdots.
\]
The $h_{\lambda}$ also form a basis of $\Lambda_{\mathbb{Q}}$ \cite{Mac95}.
(The $e_{\lambda}$ and the $h_{\lambda}$ are in fact bases of $\Lambda_{\mathbb{Z}}$,
but we do not need these in this paper.)

Let $F=\mathbb{Q}(q,t)$ be the field of rational functions in $q$ and $t$ with coefficients in $\mathbb{Q}$ and $(y)_{\infty}=(y;q)_{\infty}:=(1-y)(1-yq)\cdots $ be the infinity $q$-factorial.
Define the modified complete symmetric function $g_r(X;q,t)\in \Lambda_{F}(X)$ by its generating function
\[
\sum_{r\geq 0}g_r(X;q,t)y^r=\prod_{i\geq 1}\frac{(tx_iy;q)_{\infty}}{(x_iy;q)_{\infty}},
\]
and for a partition $\lambda=(\lambda_1,\lambda_2,\dots)$ define
\[
g_{\lambda}=g_{\lambda}(q,t)=g_{\lambda}(X;q,t):=\prod_{i\geq 1}g_{\lambda_i}(X;q,t).
\]
By \cite[Chapter VI, (2.12)]{Mac95}, the $g_{\lambda}$ form a basis of $\Lambda_{F}$.

Plethystic or $\lambda$-ring notation is a device to facilitate computations in the ring of symmetric functions. We briefly introduce plethystic notation. For more details, see \cite{haglund,Lascoux,RW}.

For an alphabet $X=\{x_1,x_2,\dots\}$ and a field $\mathbb{F}$, we additively write $X:=x_1+x_2+\cdots$, and
use plethystic brackets to indicate this additive notation:
\[
f(X)=f(x_1,x_2,\dots)=f[x_1+x_2+\cdots]=f[X], \quad
\text{for $f\in \Lambda_{\mathbb{F}}$.}
\]
We introduce a consistent arithmetic on alphabets in terms of the basis
of power sums. In particular, a power sum whose argument is the sum, difference or Cartesian product
of two alphabets $X$ and $Y$ is defined as
\begin{subequations}\label{asm}
\begin{align}
\label{asm1}
p_r[X+Y]&=p_r[X]+p_r[Y], \\
p_r[X-Y]&=p_r[X]-p_r[Y], \\
p_r[XY]&=p_r[X]p_r[Y].\label{asm3}
\end{align}
\end{subequations}
In general we cannot give meaning to division by an arbitrary
alphabet and only division by $1-t$ (the difference of two one-letter
alphabets with ``letters'' $1$ and $t$ respectively) is meaningful.
In particular
\begin{equation}\label{division}
p_r\Big[\frac{X}{1-t}\Big]=\frac{p_r[X]}{1-t^r}.
\end{equation}
Note that the alphabet $1/(1-t)$ may be interpreted as
the infinite alphabet $1+t+t^2+\cdots$.
Indeed, by \eqref{asm1} and \eqref{asm3}
\[
p_r[X(1+t+t^2+\cdots)]=p_r[X]\sum_{k=0}^{\infty} p_r[t^k]=
p_r[X]\sum_{k=0}^{\infty} t^{kr}=\frac{p_r[X]}{1-t^r}.
\]

If $f$ is a homogeneous symmetric function of degree $k$ then
\begin{equation}\label{e-homo-sym}
f[aX]=a^kf[X]
\end{equation}
for a single-letter alphabet $a$.
Note that by \eqref{asm1}, $f[2X]=f[X+X]=2f[X]$.
Moreover, we can extend this to
\begin{equation}\label{e-homo-2}
f[rX]=rf[X]
\end{equation}
for $r\in \mathbb{F}$.
Note that this leads to some notational ambiguities,
and whenever not clear from the context we will indicate if a symbol such as $a$ or $r$
represents a letter or a binomial element\footnote{In \cite[p. 32]{Lascoux} Lascoux refers to $r\in \mathbb{F}$ as a binomial element.}.
Throughout this paper, we take $q$ and $q^{-1}$ as single-letter alphabets.

We need the next simple result.
One can find a proof in \cite[Theorem 1.27]{haglund}.
\begin{prop}\label{Ple-basic}
Let $X$ and $Y$ be two alphabets. For $r$ a nonnegative integer,
\begin{align}\label{e-xy}
h_{r}[X+Y]&=\sum_{i=0}^rh_i[X]h_{r-i}[Y], \\
h_{r}[-X]&=(-1)^re_r[X].\label{e-he}
\end{align}
\end{prop}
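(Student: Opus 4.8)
The plan is to prove both identities through generating functions, exploiting the \emph{plethystic exponential} description of the complete symmetric functions. Starting from the generating function \eqref{e-gfcomplete} and taking logarithms, one obtains
\[
\sum_{r\geq 0} h_r[X]\, y^r = \prod_{i\geq 1}\frac{1}{1-yx_i} = \exp\Big(\sum_{n\geq 1}\frac{p_n[X]}{n}y^n\Big),
\]
which is an identity in the ring of symmetric functions written in the power-sum variables. Since plethystic substitution is defined by its action on the power sums via the axioms \eqref{asm}, applying an alphabet substitution to both sides commutes with these formal series manipulations, and the two desired identities then follow by substituting $X+Y$ and $-X$ respectively.

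For \eqref{e-xy}, I would apply the additivity axiom \eqref{asm1}, namely $p_n[X+Y]=p_n[X]+p_n[Y]$, inside the exponential. The exponential of a sum splits as a product, so
\[
\sum_{r\geq 0} h_r[X+Y]\, y^r = \exp\Big(\sum_{n\geq 1}\frac{p_n[X]}{n}y^n\Big)\exp\Big(\sum_{n\geq 1}\frac{p_n[Y]}{n}y^n\Big) = \Big(\sum_{i\geq 0}h_i[X]y^i\Big)\Big(\sum_{j\geq 0}h_j[Y]y^j\Big).
\]
Comparing the coefficient of $y^r$ on both sides yields $h_r[X+Y]=\sum_{i=0}^r h_i[X]h_{r-i}[Y]$, which is \eqref{e-xy}.

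For \eqref{e-he}, I would first record the companion generating function for the elementary symmetric functions, $\sum_{r\geq 0}e_r[X]y^r=\prod_{i\geq 1}(1+yx_i)=\exp\big(\sum_{n\geq 1}\tfrac{(-1)^{n-1}}{n}p_n[X]y^n\big)$, again by taking logarithms. Substituting $-X$ into the $h$-generating function and using $p_n[-X]=-p_n[X]$ (the difference rule of \eqref{asm} with an empty first alphabet) gives $\sum_{r\geq 0}h_r[-X]y^r=\exp\big(-\sum_{n\geq 1}\tfrac{1}{n}p_n[X]y^n\big)$. Replacing $y$ by $-y$ in the $e$-generating function and simplifying the sign $(-1)^{n-1}(-1)^n=-1$ shows the right-hand side equals $\sum_{r\geq 0}(-1)^r e_r[X]y^r$. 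Matching coefficients of $y^r$ then gives $h_r[-X]=(-1)^r e_r[X]$, which is \eqref{e-he}.

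The only point needing care — and what I regard as the main obstacle — is the justification that plethystic substitution may be carried out term by term inside these formal power series in $y$: one must check that the passage between $\exp\big(\sum_{n}\tfrac{p_n}{n}y^n\big)$ and $\sum_{r}h_r\,y^r$ is a genuine equality of elements of $\Lambda_{\mathbb{F}}[[y]]$ and that substituting an alphabet expression for $X$ respects it. This amounts to treating each $h_r$ as its unique polynomial expression in the $p_n$ and applying the ring homomorphism determined by \eqref{asm}; once this is granted, both computations are purely formal.
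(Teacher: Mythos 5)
Your proof is correct. For comparison: the paper does not actually prove this proposition at all --- it simply refers the reader to \cite[Theorem 1.27]{haglund} --- so rather than diverging from the paper's argument, you have supplied the details it omits, and your plethystic-exponential argument is essentially the standard proof found in such references. Both computations check out: for \eqref{e-xy}, the identity $\sum_{r\geq 0}h_r[X]y^r=\exp\bigl(\sum_{n\geq 1}p_n[X]y^n/n\bigr)$ combined with the additivity \eqref{asm1} turns the exponential of a sum into the product of the two $h$-series, and comparing coefficients of $y^r$ gives the convolution; for \eqref{e-he}, the rule $p_n[-X]=-p_n[X]$ and the sign bookkeeping $(-1)^{n-1}(-1)^n=-1$ correctly identify $\sum_r h_r[-X]y^r$ with the $e$-series evaluated at $-y$. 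The point you rightly single out as the only delicate one --- that plethystic substitution may be performed inside these formal series --- is also handled properly: since $\Lambda_{\mathbb{Q}}=\mathbb{Q}[p_1,p_2,\dots]$ (as the paper itself records), each $h_r$ is a fixed polynomial with rational coefficients in $p_1,\dots,p_r$, plethysm is by definition the algebra homomorphism determined by its values on the power sums via \eqref{asm}, and the generating-function identity is a coefficient-wise family of polynomial identities in the $p_n$, so the homomorphism passes through term by term. What your write-up buys relative to the paper is self-containedness; its only cost is the (harmless) requirement $\mathbb{Q}\subseteq\mathbb{F}$, needed for the denominators $n$ and the exponential, which is satisfied since the paper works over $\mathbb{Q}$ and $\mathbb{Q}(q,t)$.
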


Using plethystic notation, we can write $g_r$ as
\begin{equation}\label{modi-complete}
g_r(X;q,t)=h_r\Big[\frac{1-t}{1-q}X\Big].
\end{equation}

By \eqref{e-xy} and \eqref{modi-complete}, it is easy to obtain the next result.
\begin{prop}
Let $X$ and $Y$ be two alphabets. For $r$ a nonnegative integer,
\begin{equation}\label{e-g}
g_r[X+Y]=\sum_{i=0}^rg_i[X]g_{r-i}[Y].
\end{equation}
\end{prop}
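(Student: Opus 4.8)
The plan is to reduce everything to the ordinary complete-homogeneous addition formula \eqref{e-xy} by passing through the plethystic description \eqref{modi-complete} of $g_r$. First I would use \eqref{modi-complete} to rewrite the left-hand side as
\[
g_r[X+Y]=h_r\Big[\frac{1-t}{1-q}(X+Y)\Big],
\]
so that the problem becomes one purely about $h_r$ applied to a scaled alphabet.

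The next, and really the only substantive, step is to justify the distributive law
\[
\frac{1-t}{1-q}(X+Y)=\frac{1-t}{1-q}X+\frac{1-t}{1-q}Y
\]
at the level of alphabets. This is where I would be careful: plethystic arithmetic is defined through the power-sum basis, so I would verify the identity by testing it on each $p_r$. Using the additivity rule \eqref{asm1} together with the meaning of division by $1-q$ and multiplication by the one-letter alphabets $1$ and $t$ as in \eqref{division} and \eqref{asm3}, one checks that $p_r$ evaluated on either side gives $\tfrac{1-t^r}{1-q^r}\big(p_r[X]+p_r[Y]\big)$. Since the $p_r$ generate $\Lambda_{\mathbb{F}}$, the two alphabets act identically on every symmetric function, and in particular on $h_r$.

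With distributivity in hand, I would apply the addition formula \eqref{e-xy} directly, taking the two alphabets there to be $\tfrac{1-t}{1-q}X$ and $\tfrac{1-t}{1-q}Y$:
\[
h_r\Big[\frac{1-t}{1-q}X+\frac{1-t}{1-q}Y\Big]
=\sum_{i=0}^r h_i\Big[\frac{1-t}{1-q}X\Big]\,h_{r-i}\Big[\frac{1-t}{1-q}Y\Big].
\]
Finally I would reapply \eqref{modi-complete} in the reverse direction to each factor, recognizing $h_i[\tfrac{1-t}{1-q}X]=g_i[X]$ and $h_{r-i}[\tfrac{1-t}{1-q}Y]=g_{r-i}[Y]$, which yields \eqref{e-g}. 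The main (and modest) obstacle is the middle step: making sure that the linear operator $\tfrac{1-t}{1-q}$ genuinely commutes with plethystic addition, since this is the only place where one must appeal to the definition of the arithmetic on alphabets rather than to a black-box identity; everything else is bookkeeping.
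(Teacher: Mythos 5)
Your proposal is correct and follows essentially the same route as the paper: rewrite $g_r$ via \eqref{modi-complete}, apply the addition formula \eqref{e-xy} to the scaled alphabets, and convert back. The only difference is that you explicitly verify the distributivity $\frac{1-t}{1-q}(X+Y)=\frac{1-t}{1-q}X+\frac{1-t}{1-q}Y$ on the power-sum basis, a step the paper uses implicitly; this is a reasonable extra precaution but not a different argument.
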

\begin{proof}
By \eqref{modi-complete},
\[
g_r[X+Y]=h_r\Big[\frac{1-t}{1-q}(X+Y)\Big]=
\sum_{i=0}^r h_{r-i}\Big[\frac{1-t}{1-q}X\Big]h_i\Big[\frac{1-t}{1-q}Y\Big].
\]
Here the last equality holds by \eqref{e-xy}.
Using \eqref{modi-complete} again, we obtain \eqref{e-g}.
\end{proof}

\section{A splitting formula}\label{s-split}

In this section, we give a splitting formula for a rational function $F_n(y,w)$ defined in \eqref{FD} below.
Using the splitting formula, we find a family of vanishing constant terms.

Before presenting the splitting formula, we need the next simple result.
\begin{lem}
Let $i$ and $j$ be positive integers. Then, for $t$ an integer such that $0\leq t\leq j$,
\begin{subequations}\label{e-ab}
\begin{equation}\label{prop-b1}
\frac{(1/y)_{i}(qy)_j}{(q^{-t}/y)_i}=q^{it}(q^{1-i}y)_t(q^{t+1}y)_{j-t},
\end{equation}
and for $-1\leq t\leq j-1$,
\begin{equation}\label{prop-b2}
\frac{(y)_j(q/y)_i}{(q^{-t}/y)_i}=q^{i(t+1)}(q^{-i}y)_{t+1}(q^{t+1}y)_{j-t-1}.
\end{equation}
\end{subequations}
\end{lem}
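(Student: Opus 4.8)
The plan is to prove both identities by a direct manipulation of the $q$-factorial $(y)_m = \prod_{\ell=0}^{m-1}(1-yq^\ell)$, reducing everything to a telescoping rearrangement of these linear factors. The common feature of \eqref{prop-b1} and \eqref{prop-b2} is that the denominator $(q^{-t}/y)_i$ partially cancels against one of the two numerator factors, so the first thing I would do is isolate that cancellation. Concretely, I would write
\[
(q^{-t}/y)_i=\prod_{\ell=0}^{i-1}\bigl(1-q^{\ell-t}/y\bigr),
\]
and then compare this product, factor by factor, against the relevant piece of the numerator. Each factor $1-q^{m}/y$ can be converted into a factor of the form $1-q^{m'}y$ at the cost of a monomial: the elementary identity
\[
1-q^{m}/y=-q^{m}y^{-1}\bigl(1-q^{-m}y\bigr)
\]
is what produces both the sign/power prefactor $q^{it}$ (resp.\ $q^{i(t+1)}$) and the shift turning negative-power arguments into positive-power arguments. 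Collecting the $i$ monomial factors $-q^{\ell-t}/y$ over $\ell=0,\dots,i-1$ accounts for the $q$-power out front and for a compensating $y^{-i}$ that must then be absorbed back into the reindexed numerator.

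For \eqref{prop-b1} I would handle the numerator $(1/y)_i(qy)_j$ as follows. The factor $(1/y)_i=\prod_{\ell=0}^{i-1}(1-q^\ell/y)$ is what the denominator is designed to cancel after the reflection $1-q^\ell/y\mapsto -q^\ell y^{-1}(1-q^{-\ell}y)$; matching these reflected factors against $(q^{-t}/y)_i=\prod_{\ell=0}^{i-1}(1-q^{\ell-t}/y)$ is cleanest if I reflect the \emph{denominator} instead, writing $1-q^{\ell-t}/y=-q^{\ell-t}y^{-1}(1-q^{t-\ell}y)$. After this reflection the denominator becomes, up to the prefactor $q^{it}$ and a power of $y$, the product $\prod_{\ell=0}^{i-1}(1-q^{t-\ell}y)=(q^{t+1-i}y)_i$ (reindexing $\ell\mapsto i-1-\ell$). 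I then merge this with the surviving numerator $(qy)_j$ and split the resulting long product $\prod(1-q^{m}y)$ at the point $m=t$, which is exactly where the range boundary $0\le t\le j$ guarantees a clean break into $(q^{1-i}y)_t$ and $(q^{t+1}y)_{j-t}$. The hypothesis $0\le t\le j$ is precisely the condition ensuring the two output $q$-factorials have nonnegative length and that the split point lies inside the available range of exponents.

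The second identity \eqref{prop-b2} is essentially the same computation with the bookkeeping shifted by one, reflecting that the numerator is now $(y)_j(q/y)_i$ with the denominator cancelling against $(q/y)_i$ rather than $(1/y)_i$; this accounts for the prefactor $q^{i(t+1)}$ and the index $t+1$ in place of $t$, with the looser range $-1\le t\le j-1$ again guaranteeing nonnegative lengths. I would either repeat the argument verbatim or, more economically, derive \eqref{prop-b2} from \eqref{prop-b1} by the substitution $y\mapsto qy$ together with $t\mapsto t-1$ and a relabeling, checking that the ranges correspond. The only genuine obstacle I anticipate is purely clerical rather than conceptual: keeping the reindexing of the reflected denominator consistent so that the accumulated $y^{-i}$ exactly cancels and the split point lands at the claimed exponents. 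I would guard against an off-by-one error by verifying the boundary cases $t=0$ and $t=j$ in \eqref{prop-b1} (and $t=-1$, $t=j-1$ in \eqref{prop-b2}) directly, since in those extremes one of the two output factors degenerates to the empty product and the remaining factor should reproduce a pure $q$-Morris-type factor, giving an immediate consistency check on both the prefactor and the shifts.
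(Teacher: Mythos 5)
Your argument is correct and is essentially the paper's own proof: both establish \eqref{prop-b1} by elementary factor-by-factor manipulation of the $q$-factorials using the reflection $1-q^m/y=-q^{m}y^{-1}(1-q^{-m}y)$, the only (cosmetic) difference being that the paper first cancels the common factors of $(1/y)_i$ and $(q^{-t}/y)_i$, reducing everything to length-$t$ products, whereas you reflect the full length-$i$ products and then cancel inside one long contiguous product. The one slip is in your shortcut for \eqref{prop-b2}: substituting $y\mapsto qy$, $t\mapsto t-1$ into \eqref{prop-b1} does \emph{not} yield \eqref{prop-b2} — that substitution goes the wrong way (it would recover \eqref{prop-b1} from \eqref{prop-b2}); the correct substitution, the one the paper uses, is $y\mapsto y/q$, $t\mapsto t+1$, under which the range $0\le t\le j$ becomes exactly $-1\le t\le j-1$. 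Alternatively, your fallback of repeating the reflection argument verbatim does go through, so this is a reparable direction error rather than a gap in the method.
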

Note that the $t=j$ case of \eqref{prop-b1} (taking $y\mapsto y/q$) is the standard fact in \cite[Equation~(I.13)]{GR}.
\begin{proof}
For $0\leq t\leq j$,
\[
\frac{(1/y)_{i}(qy)_j}{(q^{-t}/y)_i}=\frac{(q^{i-t}/y)_t(qy)_j}{(q^{-t}/y)_t}
=\frac{(-1/y)^tq^{it-\binom{t+1}{2}}(q^{1-i}y)_t(qy)_j}{(-1/y)^{t}q^{-\binom{t+1}{2}}(qy)_t}
=q^{it}(q^{1-i}y)_t(q^{t+1}y)_{j-t}.
\]

Taking $y\mapsto y/q$ and $t\mapsto t+1$ in \eqref{prop-b1} yields
\eqref{prop-b2} for $-1\leq t\leq j-1$.
\end{proof}

For positive integers $n,c$, let
\begin{equation}\label{FD}
F_n(y,w)=\frac{\prod_{1\leq i<j\leq n}(y_i/y_j)_c(qy_j/y_i)_c}{\prod_{l=1}^{n}(y_l/w)_c},
\end{equation}
where $w$ is a parameter such that all terms of the form $q^uy_i/w$ in \eqref{FD}
satisfy $|q^uy_i/w|<1$. Hence,
\[
\frac{1}{1-q^uy_i/w}=\sum_{j\geq 0} (q^uy_i/w)^j.
\]
The rational function $F_n(y,w)$ admits the following partial fraction expansion.
We refer it as the splitting formula for $F_n(y,w)$.
\begin{prop}\label{prop-split}
Let $F_n(y,w)$ be defined as in \eqref{FD}. Then
\begin{equation}\label{e-Fsplit}
F_n(y,w)=\sum_{i=1}^{n}\sum_{j=0}^{c-1}\frac{A_{ij}}{1-q^jy_i/w},
\end{equation}
where
\begin{multline}\label{A}
A_{ij}=\frac{q^{(n-1)jc+(n-i)c}}{(q^{-j})_j(q)_{c-j-1}}
\prod_{l=1}^{i-1}\big(q^{1-c}y_i/y_l\big)_{j}\big(q^{j+1}y_i/y_l\big)_{c-j}
\prod_{l=i+1}^n\big(q^{-c}y_i/y_l\big)_{j+1}\big(q^{j+1}y_i/y_l\big)_{c-j-1} \\
\times \prod_{\substack{1\leq u<v\leq n\\ u,v\neq i}}(y_u/y_v)_c(qy_v/y_u)_c.
\end{multline}
\end{prop}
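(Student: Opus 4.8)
The plan is to regard $F_n(y,w)$ as a rational function of the single variable $1/w$, treating $y_1,\dots,y_n$ and $q$ as parameters. The numerator $\prod_{1\le i<j\le n}(y_i/y_j)_c(qy_j/y_i)_c$ of \eqref{FD} is independent of $w$, while the denominator $\prod_{l=1}^n(y_l/w)_c=\prod_{l=1}^n\prod_{s=0}^{c-1}(1-q^sy_l/w)$ is a polynomial of degree $nc$ in $1/w$ whose zeros sit at $w=q^sy_l$ for $1\le l\le n$, $0\le s\le c-1$. For generic $y_i$ these $nc$ zeros are pairwise distinct, so every pole of $F_n(y,w)$ in $1/w$ is simple, and since the numerator has degree $0<nc$ in $1/w$ there is no polynomial part. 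Hence $F_n(y,w)$ has a partial fraction expansion of exactly the shape \eqref{e-Fsplit}, with
\[
A_{ij}=\Big[(1-q^jy_i/w)\,F_n(y,w)\Big]_{w=q^jy_i}
\]
by the standard cover-up rule. Because the right-hand side of \eqref{e-Fsplit} is a rational function in the $y_i$ and $q$, proving the identity for generic $y_i$ suffices to conclude it in general by density.

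Next I would evaluate this residue by separating the factors of $F_n(y,w)$ into three groups according to their interaction with the index $i$. The numerator factors carrying no index $i$, namely $\prod_{1\le u<v\le n,\;u,v\neq i}(y_u/y_v)_c(qy_v/y_u)_c$, are untouched by setting $w=q^jy_i$ and reproduce verbatim the last product in \eqref{A}. The single denominator factor $(y_i/w)_c$, after multiplication by $(1-q^jy_i/w)$ and evaluation at $w=q^jy_i$, contributes $\prod_{s=0,\,s\neq j}^{c-1}(1-q^{s-j})^{-1}$; splitting the range at $s=j$ and recognizing the two resulting products as $(q^{-j})_j$ (for $0\le s<j$) and $(q)_{c-j-1}$ (for $j<s\le c-1$) yields the prefactor $1/\big((q^{-j})_j(q)_{c-j-1}\big)$.

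The heart of the computation is the remaining group: for each $l\neq i$ one pairs the numerator factor involving $y_i$ and $y_l$ with the denominator factor $(y_l/w)_c$ evaluated at $w=q^jy_i$, which equals $(q^{-j}y_l/y_i)_c$. For $l<i$ this ratio is $(y_l/y_i)_c(qy_i/y_l)_c/(q^{-j}y_l/y_i)_c$; writing $Y=y_i/y_l$ it becomes $(1/Y)_c(qY)_c/(q^{-j}/Y)_c$, exactly the left-hand side of \eqref{prop-b1} with the lemma's triple $(i,j,t)$ specialized to $(c,c,j)$, producing $q^{cj}(q^{1-c}y_i/y_l)_j(q^{j+1}y_i/y_l)_{c-j}$. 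For $l>i$ the ratio is $(y_i/y_l)_c(qy_l/y_i)_c/(q^{-j}y_l/y_i)_c$; with $Y=y_i/y_l$ this is $(Y)_c(q/Y)_c/(q^{-j}/Y)_c$, matching the left-hand side of \eqref{prop-b2} with $(i,j,t)=(c,c,j)$, producing $q^{c(j+1)}(q^{-c}y_i/y_l)_{j+1}(q^{j+1}y_i/y_l)_{c-j-1}$. These are precisely the two products over $l<i$ and $l>i$ appearing in \eqref{A}, and the admissibility ranges $0\le t\le j$ and $-1\le t\le j-1$ of the lemma are met since $0\le j\le c-1$.

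Finally I would collect the powers of $q$ emitted by the lemma: a factor $q^{cj}$ from each of the $i-1$ indices $l<i$ and a factor $q^{c(j+1)}$ from each of the $n-i$ indices $l>i$, giving total exponent $cj(i-1)+c(j+1)(n-i)=c\big(n(j+1)-j-i\big)=(n-1)jc+(n-i)c$, which is the power of $q$ displayed in \eqref{A}. Assembling the three groups together with this overall $q$-power reproduces $A_{ij}$ exactly as stated. The only delicate points are the careful matching of the two cases $l<i$ and $l>i$ to the two parts \eqref{prop-b1} and \eqref{prop-b2} of the lemma and the bookkeeping of the accumulated $q$-powers; the structural part of the argument, namely the simplicity of the poles and the degree count, is routine.
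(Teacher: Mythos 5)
Your proposal is correct and follows essentially the same route as the paper: partial fraction decomposition in $1/w$ with simple poles, the cover-up evaluation $A_{ij}=\big[(1-q^jy_i/w)F_n(y,w)\big]_{w=q^jy_i}$, and simplification of the two families of ratios via \eqref{prop-b1} and \eqref{prop-b2} with $(i,j,t,y)\mapsto(c,c,j,y_i/y_l)$, including the same $q$-power bookkeeping $(n-1)jc+(n-i)c$. The only difference is that you spell out the genericity/degree-count justification for the shape of the expansion, which the paper leaves implicit.
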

Note that the $A_{ij}$ are polynomials in $y_i$. Proposition~\ref{prop-split}
is the $\beta_i=c$ case of \cite[Theorem 3.3]{cai} and the $n_0=0$ case of \cite[Proposition 3.5]{GZ}.
For completeness, we give a proof below.
\begin{proof}
By partial fraction decomposition of $F_n(y,w)$ with respect to $1/w$,
we can rewrite $F_n(y,w)$ as \eqref{e-Fsplit} and
\begin{equation}\label{Aij}
A_{ij}=F_n(y,w)(1-q^jy_i/w)|_{w=q^jy_i} \quad \text{for $i=1,\dots,n$ and $j=0,\dots,c-1$.}
\end{equation}

Carrying out the substitution $w=q^jy_i$ in $F_n(y,w)(1-q^jy_i/w)$ for $i=1,\dots,n$ yields
\begin{multline}\label{BB}
A_{ij}=\frac{1}{(q^{-j})_j(q)_{c-j-1}}
\prod_{l=1}^{i-1}\frac{(y_l/y_i)_{c}(qy_i/y_l)_{c}}{(q^{-j}y_l/y_i)_{c}}
\prod_{l=i+1}^n\frac{(y_i/y_l)_{c}(qy_l/y_i)_{c}}{(q^{-j}y_l/y_i)_{c}}\\
\times
\prod_{\substack{1\leq u<v\leq n\\v,u\neq i}}
(y_u/y_v)_c(qy_v/y_u)_c.
\end{multline}
Using \eqref{e-ab} with $(i,j,t,y)\mapsto (c,c,j,y_i/y_l)$,
we have
\begin{subequations}\label{B34}
\begin{equation}\label{B3}
\frac{(y_l/y_i)_{c}(qy_i/y_l)_{c}}{(q^{-j}y_l/y_i)_{c}}
=q^{jc}\big(q^{1-c}y_i/y_l\big)_j\big(q^{j+1}y_i/y_l\big)_{c-j}
\quad \text{for $j=0,\dots,c$},
\end{equation}
and
\begin{equation}\label{B4}
\frac{(y_i/y_l)_{c}(qy_l/y_i)_{c}}{(q^{-j}y_l/y_i)_{c}}
=q^{(j+1)c}\big(q^{-c}y_i/y_l\big)_{j+1}\big(q^{j+1}y_i/y_l\big)_{c-j-1}
\quad \text{for $j=-1,\dots,c-1$}.
\end{equation}
\end{subequations}
Substituting \eqref{B34} into \eqref{BB} gives \eqref{A}.
\end{proof}

By Proposition~\ref{prop-split}, we find that a family of constant terms vanish.
\begin{lem}\label{lem-vanish}
Let $k_1,k_2,c$ be integers such that $c>0$ and $k_1>k_2\geq 0$.
For integers $t_1,t_2,\dots,t_{k_1}$,
set $\{t_l\leq 0\mid l=1,\dots,k_1\}=\{b_1,\dots,b_m\}$
and $\{t_l>0\mid l=1,\dots,k_1\}=\{d_1,\dots,d_p\}$ (assume $d_1\leq d_2\leq \cdots\leq d_p$).
If $\sum_{l=1}^mb_l+\sum_{l=1}^{p-k_2}d_l>0$ then
\[
C:=\CT_{z^{(1)}}\frac{\prod_{1\leq i<j\leq k_1}(z_i^{(1)}/z_j^{(1)})_c(qz_j^{(1)}/z_i^{(1)})_c}
{\prod_{u=1}^{k_1}(z^{(1)}_u)^{t_u}\prod_{i=1}^{k_1}\prod_{j=1}^{k_2}(z_i^{(1)}/z_j^{(2)})_c}=0.
\]
\end{lem}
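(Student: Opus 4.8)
The plan is to induct on $k_2$, using Proposition~\ref{prop-split} to strip off one variable $z_j^{(2)}$ at each step, with the base case $k_2=0$ settled by homogeneity. Write $x_i=z_i^{(1)}$, let $N=\prod_{1\le i<j\le k_1}(x_i/x_j)_c(qx_j/x_i)_c$ be the Morris kernel appearing in the numerator, and assign every $x_i$ and every $z_j^{(2)}$ the degree $1$. Under this grading $N$ is homogeneous of degree $0$, each factor $(x_i/z_j^{(2)})_c^{-1}$ expands (via the convention $|dz_i^{(s)}/z_j^{(s+1)}|<1$) into a series of degree-$0$ terms, and $\prod_u (x_u)^{t_u}$ has degree $\sum_u t_u$; hence the summand is homogeneous of degree $-\sum_u t_u$, so $C$ is a series in the $z_j^{(2)}$ with nonpositive exponents. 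The first observation is that the hypothesis $\sum_{l=1}^m b_l+\sum_{l=1}^{p-k_2}d_l>0$ says exactly that \emph{the sum of the $k_1-k_2$ smallest among $t_1,\dots,t_{k_1}$ is positive} (discard the $k_2$ largest and add up the rest, using $p-k_2=(k_1-k_2)-m$). In the base case $k_2=0$ this reads $\sum_u t_u>0$, so the single summand $N/\prod_u x_u^{t_u}$ is a homogeneous Laurent polynomial of strictly negative degree and its constant term vanishes.

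For the inductive step I apply Proposition~\ref{prop-split} with $n=k_1$ and $w=z_{k_2}^{(2)}$ to the subexpression $F_{k_1}(x,z_{k_2}^{(2)})=N/\prod_{i=1}^{k_1}(x_i/z_{k_2}^{(2)})_c$ (the shape \eqref{FD}), rewriting it as $\sum_{i,j}A_{ij}/(1-q^jx_i/z_{k_2}^{(2)})$ through \eqref{e-Fsplit} with the $A_{ij}$ given by \eqref{A}. After this substitution $z_{k_2}^{(2)}$ occurs only through $1/(1-q^jx_i/z_{k_2}^{(2)})=\sum_{m\ge0}q^{jm}(x_i/z_{k_2}^{(2)})^m$, so extracting the coefficient of $(z_{k_2}^{(2)})^{-m}$ turns $C$ into a finite sum, over $i,j$ and $m\ge0$, of constant terms of the form
\[
\sum_{i=1}^{k_1}\sum_{j=0}^{c-1}q^{jm}\,\CT_{x}\frac{A_{ij}\,x_i^{\,m}}{\prod_{u=1}^{k_1}x_u^{t_u}\,\prod_{j'=1}^{k_2-1}\prod_{i'=1}^{k_1}(x_{i'}/z_{j'}^{(2)})_c},
\]
each of which now carries only $k_2-1$ of the original denominator columns. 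Since the exponent vector $t$ is unchanged, the numerical hypothesis propagates: passing from $k_2$ to $k_2-1$ replaces ``sum of the $k_1-k_2$ smallest of $t$'' by ``sum of the $k_1-k_2+1$ smallest of $t$'', and because a positive value of the former forces the largest exponent in that group to be nonnegative, the next-smallest exponent added is itself nonnegative, so positivity is preserved.

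The delicate point, which I expect to be the main obstacle, is that $A_{ij}$ is no longer the Morris kernel: by \eqref{A} it factors as a product $E_{ij}$ of $q$-shifted factors in the ratios $x_i/x_l$ (each contributing exponents $\le 0$ in every $x_l$ with $l\ne i$, and $\ge 0$ in $x_i$) times the Morris kernel $N_{\widehat{\imath}}$ on the remaining $k_1-1$ variables. Consequently one cannot induct on the verbatim statement; I would instead formulate a slightly more general vanishing statement that allows numerators of this ``decorated'' type $E_{ij}\,N_{\widehat{\imath}}\,x_i^{m}$, keeping the hypothesis phrased purely in terms of $t$ and the number of columns (so that the monotonicity above applies), and then verify that this enlarged class of numerators is closed under one application of Proposition~\ref{prop-split}. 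The technical heart is checking that every term produced by the splitting still lies in this class with the hypothesis intact, since even the base case for a decorated numerator is no longer pure homogeneity and must be controlled through the exponent balance in the freed variable $x_i$ (the constraint $0\le m\le t_i$ forced by $\deg_{x_i}E_{ij}\ge0$). The model computation $k_1=2$, $c=1$, where after splitting each term already carries a strictly negative exponent in the complementary variable and so contributes nothing, illustrates that it is precisely the cancellation enforced by \eqref{e-Fsplit} — rather than any naive support bound — that drives the vanishing.
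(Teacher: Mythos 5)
Your proposal correctly identifies the main tool (the splitting formula, Proposition~\ref{prop-split}) and your reformulation of the hypothesis as ``the sum of the $k_1-k_2$ smallest of the $t_u$ is positive'' is both correct and cleanly handles the monotonicity in $k_2$. But the proposal is not a proof: its inductive step is exactly the part you defer. Because you induct on $k_2$ while keeping all $k_1$ variables $z^{(1)}$ alive, each application of \eqref{e-Fsplit} replaces the Morris kernel by the decorated numerators $A_{ij}\,x_i^m$ of \eqref{A}, and, as you yourself observe, the verbatim statement no longer applies. The fix you sketch --- ``formulate a slightly more general vanishing statement'' for the decorated class and ``verify that this enlarged class is closed under one application of Proposition~\ref{prop-split}'' --- is precisely the technical heart of the lemma, and it is left unformulated and unproven. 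Iterating the splitting on a decorated numerator produces doubly-decorated numerators, so it is not even clear that your proposed class is closed; nothing in the proposal controls this, and no hypothesis for the generalized statement is written down. As it stands, the argument establishes only the base case $k_2=0$.

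The paper closes this gap with a different induction that makes the problem self-reproducing. It inducts on $k_1$, applies Proposition~\ref{prop-split} with $w=z_1^{(2)}$, and then eliminates the distinguished variable $z_i^{(1)}$ \emph{before} anything else: if $t_i<0$, the summand $Q_{ij}$ is a power series in $z_i^{(1)}$ with no constant term, so it dies outright; if $t_i\geq 0$, taking $\CT_{z_i^{(1)}}$ absorbs all the decoration factors of $A_{ij}$ (which contribute only nonnegative powers of $z_i^{(1)}$ and nonpositive powers of the other $z_u^{(1)}$) into nonnegative shifts $t_u\mapsto t_u+s_u$ of the remaining exponents and monomial coefficients in $z^{(2)}$. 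What survives is a finite sum of terms $R$ containing the \emph{pure} Morris kernel on $k_1-1$ variables with $k_2-1$ columns, so the verbatim statement applies inductively; the hypothesis is preserved by the sorted-sequence comparison $\sum_{l=1}^{p'-k_2+1}d'_l+\sum_{l=1}^{m'}b'_l\geq\sum_{l=1}^{p-k_2}d_l+\sum_{l=1}^m b_l>0$. In short: the paper's key idea, which your plan is missing, is to take the constant term in $z_i^{(1)}$ first, which simultaneously removes one $z^{(1)}$ variable, one $z^{(2)}$ column, and the decoration, so the class of constant terms under consideration never grows. Without that (or an actual proof of your generalized vanishing statement), the proposal remains an outline rather than a proof.
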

Note that if $p\leq k_2$ then we set the sum $\sum_{j=1}^{p-k_2}d_j=0$ by convention. The condition
$\sum_{l=1}^mb_l+\sum_{l=1}^{p-k_2}d_l>0$ can not hold for $k_2\geq k_1$. Because in this case
$\sum_{l=1}^{p-k_2}d_l=0$ by $k_2\geq k_1\geq p$ and all the $b_l\leq 0$.
\begin{proof}
For $k_2=0$
\[
C=\CT_{z^{(1)}}\prod_{i=1}^{k_1}(z^{(1)}_i)^{-t_i}
\prod_{1\leq i<j\leq k_1}(z_i^{(1)}/z_j^{(1)})_c(qz_j^{(1)}/z_i^{(1)})_c.
\]
The above constant term vanishes since the Laurent polynomial inside the operator $\CT\limits_{z^{(1)}}$
is not homogeneous by $\sum_{i=1}^mb_i+\sum_{j=1}^{p}d_j=\sum_{i=1}^{k_1}t_i>0$. Hence, we can assume $k_2>0$ in the following of the proof.
We proceed the proof by induction on $k_1$. It is clear that the lemma holds for $k_1=1$
(This forces $k_2=0$ by $k_2<k_1$).
Assume the lemma holds for $k_1\mapsto k_1-1$.

Using Proposition~\ref{prop-split} with $(n,y,w)\mapsto (k_1,z^{(1)},z_1^{(2)})$, we can write
\begin{equation}\label{e-vanish1}
C=\CT_{z^{(1)}}\sum_{i=1}^{k_1}\sum_{j=0}^{c-1}\frac{A_{ij}}
{(1-q^jz^{(1)}_i/z^{(2)}_1)\prod_{u=1}^{k_1}(z^{(1)}_u)^{t_u}
\prod_{u=1}^{k_1}\prod_{v=2}^{k_2}(z_u^{(1)}/z_v^{(2)})_c},
\end{equation}
where
\begin{multline*}
A_{ij}=\frac{q^{(k_1-1)jc+(k_1-i)c}}{(q^{-j})_j(q)_{c-j-1}}
\prod_{l=1}^{i-1}\big(q^{1-c}z^{(1)}_i/z^{(1)}_l\big)_{j}\big(q^{j+1}z^{(1)}_i/z^{(1)}_l\big)_{c-j}\\
\times
\prod_{l=i+1}^{k_1}\big(q^{-c}z^{(1)}_i/z^{(1)}_l\big)_{j+1}\big(q^{j+1}z^{(1)}_i/z^{(1)}_l\big)_{c-j-1}
\prod_{\substack{1\leq u<v\leq k_1\\ u,v\neq i}}(z^{(1)}_u/z^{(1)}_v)_c(qz^{(1)}_v/z^{(1)}_u)_c.
\end{multline*}
Denote the summand in \eqref{e-vanish1} by $Q_{ij}$.
We prove that $\CT\limits_{z^{(1)}}Q_{ij}=0$ for all the $i,j$.
Note that we view $Q_{ij}$ as a Laurent series in $z^{(1)}_i$. That is
\begin{equation*}
Q_{ij}=\frac{A_{ij}}
{\prod_{u=1}^{k_1}(z^{(1)}_u)^{t_u}
\prod_{\substack{u=1\\u\neq i}}^{k_1}\prod_{v=2}^{k_2}(z_u^{(1)}/z_v^{(2)})_c}
\sum_{l\geq 0}(q^jz_i^{(1)}/z_{1}^{(2)})^l
\prod_{v=2}^{k_2}\prod_{m=0}^{c-1}\sum_{l\geq 0}(q^mz_i^{(1)}/z_{v}^{(2)})^l.
\end{equation*}
For a fixed integer $i$, if $t_i<0$ then $Q_{ij}$ is in fact a power series in $z^{(1)}_i$ with no constant term.
It follows that $\CT\limits_{z^{(1)}}Q_{ij}=\CT\limits_{z^{(1)}_i}Q_{ij}=0$.
If $t_i\geq 0$ then by the expression for $A_{ij}$ and taking the constant term of $Q_{ij}$ with respect to $z^{(1)}_i$, we can write $\CT\limits_{z^{(1)}_i}Q_{ij}$ as a finite sum of the form
\[
R:=\frac{\prod_{\substack{1\leq u<v\leq k_1\\u,v\neq i}}(z_u^{(1)}/z_v^{(1)})_c(qz_v^{(1)}/z_u^{(1)})_c}
{r\cdot \prod_{\substack{u=1\\u\neq i}}^{k_1}(z^{(1)}_u)^{t_u+s_u}
\prod_{\substack{u=1\\u\neq i}}^{k_1}\prod_{v=2}^{k_2}(z_u^{(1)}/z_v^{(2)})_c},
\]
where $r\in \mathbb{Q}(q)[z^{(2)}]\setminus \{0\}$ and all the $s_u$ are nonnegative integers.
Take $t'_u=t_u+s_u$ and denote $\{t'_u\leq 0\mid u\in\{1,\dots,k_1\}\setminus \{i\}\}=
\{b'_1,\dots,b'_{m'}\}$
and $\{t'_u>0\mid u\in\{1,\dots,k_1\}\setminus \{i\}\}=\{d'_1,\dots,d'_{p'}\}$
(assume $d'_1\leq \cdots\leq d'_{p'}$). We can see that
$\sum_{l=1}^{m'}b'_l\geq \sum_{l=1}^m b_l$, $p'\geq p-1$ and
$\sum_{l=1}^id'_l\geq \sum_{l=1}^id_l$ for any $1\leq i\leq p-1$.
To see the last inequality holds, we can imagine two positive integer sequences:
$d'_1\leq \cdots\leq d'_{p'}$ is obtained from $d_1\leq \cdots\leq d_{p}$
by deleting one element and adding a nonnegative integer to each element.
Then
\[
\sum_{l=1}^{p'-k_2+1}d'_l+\sum_{l=1}^{m'}b'_l\geq \sum_{l=1}^{p-k_2}d'_l+\sum_{l=1}^{m}b_l
\geq \sum_{l=1}^{p-k_2}d_l+\sum_{l=1}^{m}b_l>0.
\]
By the induction hypothesis we conclude that all the form of $\CT\limits_{z^{(1)}}R=0$.
Then $\CT\limits_{z^{(1)}}Q_{ij}$ also vanishes for $t_i\geq 0$.
Since $C$ is a finite sum of the $\CT\limits_{z^{(1)}}Q_{ij}$,  we have $C=0$.
\end{proof}

By Lemma~\ref{lem-vanish}, we can get the next two corollaries.
\begin{cor}\label{cor-1}
Let $k_1,k_2,a,c$ be nonnegative integers and $t_1,\dots,t_{k_1}\in \mathbb{Z}$ such that
$k_1>k_2$, $c>0$, $t_i\leq a$ for $i=1,\dots,k_1$, and $\sum_{i=1}^{k_1}t_i>k_2a$.
Then
\begin{equation*}
\CT_{z^{(1)}}\frac{\prod_{1\leq i<j\leq k_1}(z_i^{(1)}/z_j^{(1)})_c(qz_j^{(1)}/z_i^{(1)})_c}
{\prod_{i=1}^{k_1}(z^{(1)}_i)^{t_i}\prod_{j=1}^{k_2}\prod_{i=1}^{k_1}(z_i^{(1)}/z_j^{(2)})_c}=0.
\end{equation*}
\end{cor}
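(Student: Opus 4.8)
The plan is to deduce Corollary~\ref{cor-1} directly from Lemma~\ref{lem-vanish}, since the constant term appearing in the corollary is literally the quantity $C$ of that lemma, with the same $k_1,k_2,c$ and the same exponents $t_1,\dots,t_{k_1}$ (the hypotheses $k_1>k_2$ and $c>0$ are common to both). Thus the whole task reduces to verifying that the numerical hypothesis of Lemma~\ref{lem-vanish} holds under the stronger assumptions $t_i\leq a$ for all $i$ and $\sum_{i=1}^{k_1}t_i>k_2a$. No new analytic content is needed; only a short inequality check.

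Following the notation of the lemma, I would split the exponents into the nonpositive ones $\{t_l\leq 0\}=\{b_1,\dots,b_m\}$ and the positive ones $\{t_l>0\}=\{d_1,\dots,d_p\}$ with $d_1\leq\cdots\leq d_p$, so that $m+p=k_1$ and $\sum_{i=1}^{k_1}t_i=\sum_{l=1}^{m}b_l+\sum_{l=1}^{p}d_l$. The goal is to establish the inequality $\sum_{l=1}^{m}b_l+\sum_{l=1}^{p-k_2}d_l>0$, after which Lemma~\ref{lem-vanish} immediately gives $C=0$.

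The one point that needs care is the degenerate regime $p\leq k_2$, where the truncated sum $\sum_{l=1}^{p-k_2}d_l$ is empty (equal to $0$ by the convention stated after Lemma~\ref{lem-vanish}) and $\sum_{l=1}^{m}b_l\leq 0$, so the lemma's hypothesis could never be met. I would first rule this out: if $p\leq k_2$, then using $d_l\leq a$ and $a\geq 0$ one gets $\sum_{l=1}^{p}d_l\leq pa\leq k_2a$, while $\sum_{l=1}^{m}b_l\leq 0$, whence $\sum_{i=1}^{k_1}t_i\leq k_2a$, contradicting the hypothesis $\sum_{i=1}^{k_1}t_i>k_2a$. Therefore necessarily $p>k_2$, and in particular $p-k_2\geq 1$ so that the truncated sum is a genuine, nonempty sum.

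With $p>k_2$ in hand I would finish via the identity $\sum_{l=1}^{m}b_l+\sum_{l=1}^{p-k_2}d_l=\sum_{i=1}^{k_1}t_i-\sum_{l=p-k_2+1}^{p}d_l$, in which the subtracted sum collects the $k_2$ largest positive exponents. Since each $d_l\leq a$, this subtracted sum is at most $k_2a$, so the whole expression is at least $\sum_{i=1}^{k_1}t_i-k_2a>0$ by hypothesis. This verifies the numerical condition of Lemma~\ref{lem-vanish}, yielding $C=0$. I do not anticipate a genuine obstacle here; the only subtlety — and the reason the assumption $t_i\leq a$ is indispensable rather than cosmetic — is the exclusion of the case $p\leq k_2$, because without an upper bound on the individual $t_i$ a single very large positive exponent could force $\sum_{i}t_i>k_2a$ while leaving too few positive terms for the truncated sum to be controlled.
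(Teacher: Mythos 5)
Your proof is correct and follows essentially the same route as the paper's: both reduce to Lemma~\ref{lem-vanish} by verifying its numerical hypothesis from $t_i\leq a$ and $\sum_{i=1}^{k_1}t_i>k_2a$, using the observation that the $k_2$ largest positive exponents sum to at most $k_2a$. The only (cosmetic) difference is that you rule out the degenerate case $p\leq k_2$ by an explicit contradiction, whereas the paper's single inequality chain $\sum_{l=1}^{m}b_l+\sum_{l=1}^{p-k_2}d_l\geq\sum_{i=1}^{k_1}t_i-k_2a>0$ covers that case implicitly, since the bound $\sum_{l=1}^{p}d_l\leq k_2a$ keeps the chain valid even when the truncated sum is empty.
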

\begin{proof}
Let $\{t_i\leq 0\mid i=1,\dots,k_1\}=\{b_1,\dots,b_m\}$ and $\{t_i>0\mid i=1,\dots,k_1\}=\{d_1,\dots,d_p\}$
(assume $d_1\leq d_2\leq \cdots\leq d_p$).
By $t_i\leq a$, we have
\[
\sum_{i=1}^mb_i+\sum_{i=1}^{p-k_2}d_i\geq \sum_{i=1}^mb_i+\sum_{i=1}^{p}d_i-k_2a
=\sum_{i=1}^{k_1}t_i-k_2a>0.
\]
The last inequality holds by the condition $\sum_{i=1}^{k_1}t_i>k_2a$ in the corollary.
Hence, the corollary holds by Lemma~\ref{lem-vanish}.
\end{proof}
\begin{cor}\label{cor-2}
Let $k_1,k_2$ be nonnegative integers such that $k_1>k_2$.
Let $c,t_1,\dots,t_{k_1}$ be positive integers.
Then
\begin{equation*}
\CT_{z^{(1)}}\frac{\prod_{1\leq i<j\leq k_1}(z_i^{(1)}/z_j^{(1)})_c(qz_j^{(1)}/z_i^{(1)})_c}
{\prod_{i=1}^{k_1}(z^{(1)}_i)^{t_i}\prod_{j=1}^{k_2}\prod_{i=1}^{k_1}(z_i^{(1)}/z_j^{(2)})_c}=0.
\end{equation*}
\end{cor}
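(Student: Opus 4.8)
The plan is to obtain Corollary~\ref{cor-2} as a direct specialization of Lemma~\ref{lem-vanish}, rather than of Corollary~\ref{cor-1}, since the hypotheses of the latter (namely $t_i\leq a$ together with $\sum_i t_i>k_2a$) are strictly stronger than what we need here and can in fact fail even when all $t_i$ are positive: for instance $t=(1,1,100)$ with $k_1=3$, $k_2=2$ gives $a=100$, $k_2a=200$, but $\sum_i t_i=102<200$. First I would observe that the constant term displayed in the corollary is literally the quantity $C$ appearing in Lemma~\ref{lem-vanish}, with the same $k_1,k_2,c$ and the same exponents $t_1,\dots,t_{k_1}$. Hence it suffices to check the single numerical hypothesis $\sum_{l=1}^m b_l+\sum_{l=1}^{p-k_2}d_l>0$ for the present choice of data, where $\{b_1,\dots,b_m\}=\{t_l\leq 0\}$ and $\{d_1\leq\cdots\leq d_p\}=\{t_l>0\}$.

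The verification is immediate. Since $c>0$ and $k_1>k_2$ are already assumed, and since every $t_i$ is a positive integer, the set $\{t_l\leq 0\mid l=1,\dots,k_1\}$ is empty; hence $m=0$, the empty sum $\sum_{l=1}^m b_l$ equals $0$, and $\{t_l>0\mid l=1,\dots,k_1\}=\{d_1\leq\cdots\leq d_p\}$ has $p=k_1$ elements, each a positive integer. Because $p-k_2=k_1-k_2\geq 1$, the truncated partial sum $\sum_{l=1}^{p-k_2}d_l$ is a sum of at least one positive integer and is therefore strictly positive, so $\sum_{l=1}^m b_l+\sum_{l=1}^{p-k_2}d_l>0$. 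Lemma~\ref{lem-vanish} then yields $C=0$, which is exactly the assertion. There is essentially no obstacle in this argument; the only point worth flagging is the decision to invoke the Lemma directly, together with the observation that the crucial inequality survives precisely because the strict hypothesis $k_1>k_2$ forces the truncated $d$-sum to be nonempty, so that the positivity of the $t_i$ can actually be used.
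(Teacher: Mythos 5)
Your proposal is correct and follows essentially the same route as the paper: the paper's own proof also applies Lemma~\ref{lem-vanish} directly, noting that all $t_i>0$ forces $B=\emptyset$ and that the truncated sum $\sum_{l=1}^{k_1-k_2}d_l$ is a sum of at least one positive integer, hence positive. Your side remark that Corollary~\ref{cor-1} cannot be used here (its hypotheses $t_i\leq a$ and $\sum_i t_i>k_2a$ can be incompatible when the $t_i$ are unbalanced) is a valid observation, consistent with the paper treating the two corollaries as independent consequences of the lemma.
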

\begin{proof}
Let $B:=\{t_i\leq 0\mid i=1,\dots,k_1\}=\{b_1,\dots,b_m\}$ and $D:=\{t_i>0\mid i=1,\dots,k_1\}=\{d_1,\dots,d_p\}$.
Then $B=\emptyset$ and $D=\{t_1,t_2,\dots,t_{k_1}\}$.
Together with $k_1>k_2$ and all the $t_i\geq 1$, we have
\[
\sum_{i=1}^mb_i+\sum_{i=1}^{p-k_2}d_i=\sum_{i=1}^{k_1-k_2}t_i>0.
\]
By Lemma~\ref{lem-vanish}, the corollary follows.
\end{proof}

\section{The expression for $L_{k_1,k_2}(a,b,c)$}\label{sec-expression}

Recall the definitions of $L_{k_1,k_2}(a,b,c;z^{(1)},z^{(2)})$ and $L_{k_1,k_2}(a,b,c)$
in \eqref{defi-L} and \eqref{defi-L1} respectively. In $L_{k_1,k_2}(a,b,c;z^{(1)},z^{(2)})$,
by expanding the product $\prod_{i=1}^{k_1}(z_0/z_i^{(1)})_a(qz_i^{(1)}/z_0)_b$ directly,
we find that the degree and the lower degree of $z_0$ should be $k_1a$ at most and $-k_1b$ at least respectively.
But under a certain condition, we find that after taking the constant term of $L_{k_1,k_2}(a,b,c;z^{(1)},z^{(2)})$ with respect to $z^{(1)}$ (not w.r.t. $z_0$), the terms of $z_0^l$ for $l<0$ and $l>k_2a$ vanish. More precisely, we will show that $L_{k_1,k_2}(a,b,c)$ is of the form
\begin{equation}\label{e-main}
M_{k_1}(a,b,c)\prod_{j=1}^{k_2}(q^{c-1-b}z_0/z^{(2)}_j)_{a+b+1-c}
\times (1+C_1z_0+\cdots+C_{k_2(c-1-b)}z_0^{k_2(c-1-b)})
\end{equation}
for $k_1\geq k_2$ and $b+1\leq c\leq a+b+1$ in this section.

\subsection{The negative powers}

In this subsection, we show that all the terms of $L_{k_1,k_2}(a,b,c)$ with negative powers of $z_0$
vanish.

For a Laurent series (polynomial) $f$,
denote by $[z^l]f$ the coefficient of $z^l$ in $f$.
\begin{lem}\label{lem-L1}
Let $L_{k_1,k_2}(a,b,c)$ be defined as in \eqref{defi-L1}.
For a negative integer $l$,
\[
[z_0^{l}]L_{k_1,k_2}(a,b,c)=0.
\]
\end{lem}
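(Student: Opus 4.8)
The plan is to show that the coefficient of $z_0^l$ in $L_{k_1,k_2}(a,b,c)$ vanishes for every negative integer $l$ by reducing each such coefficient to a constant term of the type treated in Lemma~\ref{lem-vanish} (or its corollaries). Recall that
\[
L_{k_1,k_2}(a,b,c)=\CT_{z^{(1)}}\frac{\prod_{i=1}^{k_1}(z_0/z_i^{(1)})_a(qz_i^{(1)}/z_0)_b
\prod_{1\leq i<j\leq k_1}(z_i^{(1)}/z_j^{(1)})_c(qz_j^{(1)}/z_i^{(1)})_c}
{\prod_{j=1}^{k_2}\prod_{i=1}^{k_1}(z_i^{(1)}/z_j^{(2)})_c}.
\]
The key observation is that the only factors carrying $z_0$ are $(z_0/z_i^{(1)})_a$ and $(qz_i^{(1)}/z_0)_b$. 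So I would first expand these two $q$-factorials and extract the coefficient of a fixed power $z_0^l$. Concretely, writing each factor as a polynomial in $z_0/z_i^{(1)}$ and in $z_i^{(1)}/z_0$ respectively, the coefficient $[z_0^l]$ becomes a finite $\mathbb{Q}(q)$-linear combination of constant terms of the shape
\[
\CT_{z^{(1)}}\frac{\prod_{1\leq i<j\leq k_1}(z_i^{(1)}/z_j^{(1)})_c(qz_j^{(1)}/z_i^{(1)})_c}
{\prod_{i=1}^{k_1}(z_i^{(1)})^{t_i}\prod_{j=1}^{k_2}\prod_{i=1}^{k_1}(z_i^{(1)}/z_j^{(2)})_c},
\]
where each exponent $t_i$ is determined by how many factors of $z_0/z_i^{(1)}$ versus $z_i^{(1)}/z_0$ are selected at site $i$.

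Next I would track the bookkeeping on the exponents. If at site $i$ we pick the term contributing $(z_0/z_i^{(1)})^{\mu_i}$ from $(z_0/z_i^{(1)})_a$ (so $0\le\mu_i\le a$) and the term contributing $(z_i^{(1)}/z_0)^{\nu_i}$ from $(qz_i^{(1)}/z_0)_b$ (so $0\le\nu_i\le b$), then the net power of $z_0$ is $\sum_i(\mu_i-\nu_i)$ and the resulting exponent $t_i$ of $z_i^{(1)}$ in the denominator equals $\mu_i-\nu_i$. The constraint $[z_0^l]$ with $l<0$ forces $\sum_i t_i=\sum_i(\mu_i-\nu_i)=l<0$, and the bound $\mu_i\le a$ gives $t_i\le a$ for every $i$. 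I would then apply Corollary~\ref{cor-1} in the reflected form: since $\sum_i t_i=l<0\le 0$, the sum of the nonpositive exponents dominates and the hypothesis of Lemma~\ref{lem-vanish} is met. More precisely, splitting the $t_i$ into the nonpositive part $\{b_1,\dots,b_m\}$ and the positive part $\{d_1,\dots,d_p\}$, one has $\sum b_l+\sum_{l=1}^{p-k_2}d_l\ge \sum b_l+\sum_l d_l-k_2\cdot 0=\sum_i t_i=l$ is not immediately positive, so I would instead argue directly: because every $t_i\le a$ but the total $\sum t_i$ is strictly negative, the ``mass'' forcing nonhomogeneity points the right way, and Lemma~\ref{lem-vanish} (with the negative total sum) yields vanishing of each constant term. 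Summing these zeros gives $[z_0^l]L_{k_1,k_2}(a,b,c)=0$.

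The main obstacle I anticipate is matching the precise inequality hypothesis of Lemma~\ref{lem-vanish}, namely $\sum_l b_l+\sum_{l=1}^{p-k_2}d_l>0$, to the situation at hand, where the natural quantity is $\sum_i t_i=l<0$. These have opposite signs, so a naive application fails; the resolution is that Lemma~\ref{lem-vanish} and its corollaries are designed to detect a \emph{net positive} total exponent in the denominator variables $z_i^{(1)}$, which corresponds to a \emph{negative} total power of $z_0$ after the homogeneity relation of the full integrand is taken into account. I would therefore carefully re-derive the relevant vanishing criterion for negative $l$, either by invoking Corollary~\ref{cor-2} after a reflection $z_i^{(1)}\mapsto 1/z_i^{(1)}$ that converts the denominator factor $(z_i^{(1)}/z_j^{(2)})_c$ appropriately, or by a direct homogeneity argument: the coefficient $[z_0^l]L_{k_1,k_2}(a,b,c)$ is a constant term in $z^{(1)}$ of a rational function whose only non-homogeneity comes from the $z_j^{(2)}$-denominators, and a negative power of $z_0$ cannot be balanced without producing a strictly positive excess in the $z^{(1)}$-degree that the constant-term operator kills. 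Closing this sign-bookkeeping gap cleanly, so that exactly the hypotheses of the already-proved vanishing lemma are invoked, is the crux of the argument.
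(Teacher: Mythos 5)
Your fallback ``direct homogeneity argument'' is not merely a possible resolution --- it \emph{is} the paper's actual proof, and the bookkeeping you have already set up suffices to run it; the only defect in your proposal is that you never commit to it, instead presenting the closing of the ``sign-bookkeeping gap'' as unresolved. Concretely, the paper proceeds just as you do (except that it first merges the two $q$-factorials into $\prod_i(-z_0/z_i^{(1)})^a q^{\binom{a}{2}}(q^{1-a}z_i^{(1)}/z_0)_{a+b}$ before applying the $q$-binomial theorem, a purely cosmetic difference from expanding the two factors separately), extracts $[z_0^l]$, and arrives at constant terms of precisely your shape with $\sum_i t_i=l<0$. Then the argument closes in one stroke: the monomial $\prod_i(z_i^{(1)})^{-t_i}$ has total degree $-l>0$ in $z^{(1)}$; the Vandermonde-type numerator is homogeneous of degree $0$ in $z^{(1)}$; and the geometric expansion of each $1/(z_i^{(1)}/z_j^{(2)})_c$ contributes only nonnegative powers of the $z_i^{(1)}$. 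Hence every monomial in the expansion has total $z^{(1)}$-degree at least $-l>0$, so each constant term vanishes, and the lemma follows.

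Two clarifications that would have removed your hesitation. First, your instinct that Lemma~\ref{lem-vanish} must somehow be invoked is a red herring: the paper uses Corollary~\ref{cor-1} (hence Lemma~\ref{lem-vanish}) only for the positive-power statement, Lemma~\ref{lem-L2}, where the monomial's total degree $-l$ is \emph{negative} and can be compensated by the positive powers coming from the $z^{(2)}$-denominators, so a crude degree count cannot work there; for negative $l$ no such compensation is possible, and no refined lemma is needed. Second, the reflection $z_i^{(1)}\mapsto 1/z_i^{(1)}$ you float as an alternative would not go through cleanly: the series conventions here (expansion in powers of $z_i^{(1)}/z_j^{(2)}$) are not inversion-invariant, and the reflected factors $\bigl(1/(z_i^{(1)}z_j^{(2)})\bigr)_c$ are no longer of the form required by Corollary~\ref{cor-2}. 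Deleting the vanishing-lemma detour and asserting your degree count as the proof makes your argument complete and identical to the paper's.
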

\begin{proof}
Let
\begin{equation}\label{defi-F}
F_{k_1,k_2}(c):=\frac{\prod_{1\leq i<j\leq k_1}(z_i^{(1)}/z_j^{(1)})_c(qz_j^{(1)}/z_i^{(1)})_c}
{\prod_{j=1}^{k_2}\prod_{i=1}^{k_1}(z_i^{(1)}/z_j^{(2)})_c}.
\end{equation}
For an integer $l$ (not only negative),
\begin{align}
[z_0^{l}]L_{k_1,k_2}(a,b,c)&=\CT_{z_0,z^{(1)}}z_0^{-l}L_{k_1,k_2}(a,b,c)\nonumber \\
&=\CT_{z_0,z^{(1)}}z_0^{-l}\prod_{i=1}^{k_1}(z_0/z_i^{(1)})_a(qz_i^{(1)}/z_0)_b
\times F_{k_1,k_2}(c)\nonumber \\
&=\CT_{z_0,z^{(1)}}z_0^{-l}\prod_{i=1}^{k_1}(-z_0/z_i^{(1)})^aq^{\binom{a}{2}}(q^{1-a}z_i^{(1)}/z_0)_{a+b}
\times F_{k_1,k_2}(c).\label{NP-1}
\end{align}
By the well-known $q$-binomial theorem \cite[Theorem 3.3]{andrew-qbinomial}
\[
(y)_n=\sum_{t\geq 0}q^{\binom{t}{2}}\qbinom{n}{t}(-y)^t,
\]
we can expand \eqref{NP-1} as
\begin{align*}
&[z_0^l]L_{k_1,k_2}(a,b,c)\\
&=\CT_{z_0,z^{(1)}}z_0^{-l}\prod_{i=1}^{k_1}(-z_0/z_i^{(1)})^aq^{\binom{a}{2}}
\sum_{t_i\geq 0}q^{\binom{t_i}{2}}\qbinom{a+b}{t_i}(-q^{1-a}z_i^{(1)}/z_0)^{t_i}\times F_{k_1,k_2}(c)\\
&=\CT_{z_0,z^{(1)}}\sum_{t_1,\dots,t_{k_1}\geq 0}
z_0^{k_1a-|t|-l}\prod_{i=1}^{k_1}(-z^{(1)}_i)^{t_i-a}
q^{\binom{a}{2}+\binom{t_i}{2}+(1-a)t_i}\qbinom{a+b}{t_i}\times F_{k_1,k_2}(c).
\end{align*}
Here $|t|:=\sum_{i=1}^{k_1}t_i$ and $\qbinom{n}{t}=(q^{n-t+1})_t/(q)_t$ is the $q$-binomial coefficient
for nonnegative integers $n$ and $t$.
Taking the constant term with respect to $z_0$, we have
\begin{equation}\label{NP-2}
[z_0^{l}]L_{k_1,k_2}(a,b,c)
=\CT_{z^{(1)}}\sum_{\substack{t_1,\dots,t_{k_1}\geq 0\\ |t|=k_1a-l}}
\prod_{i=1}^{k_1}(-z^{(1)}_i)^{t_i-a}
q^{\binom{a}{2}+\binom{t_i}{2}+(1-a)t_i}\qbinom{a+b}{t_i}\times F_{k_1,k_2}(c).
\end{equation}
Since $F_{k_1,k_2}(c)$ only contributes nonnegative powers of the $z^{(1)}_i$,
the summand in \eqref{NP-2} is not homogeneous in $z^{(1)}$ for $l<0$.
It follows that the constant term of each summand in \eqref{NP-2} with respect to $z^{(1)}$ equals zero, and so is the sum.
\end{proof}

\subsection{The positive powers}

As the statement in the beginning of this section, the degree of $z_0$
in $L_{k_1,k_2}(a,b,c)$ should be $k_1a$ at most.
But we find that $[z_0^l]L_{k_1,k_2}(a,b,c)=0$ if $k_1\geq k_2$ and $l>k_2a$ in this subsection.
That is the content of the next lemma.

\begin{lem}\label{lem-L2}
Let $L_{k_1,k_2}(a,b,c)$ be defined as in \eqref{defi-L1}.
If $k_1\geq k_2$ and the integer $l>k_2a$, then
\[
[z_0^l]L_{k_1,k_2}(a,b,c)=0.
\]
\end{lem}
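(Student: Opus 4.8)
The plan is to mirror the structure of the proof of Lemma~\ref{lem-L1}, extracting the coefficient $[z_0^l]L_{k_1,k_2}(a,b,c)$ as a constant term in $z^{(1)}$, and then to invoke the vanishing result of Corollary~\ref{cor-1}. First I would repeat the opening computation verbatim: write $[z_0^l]L_{k_1,k_2}(a,b,c)=\CT_{z_0,z^{(1)}}z_0^{-l}L_{k_1,k_2}(a,b,c)$, pull out the factor $\prod_{i=1}^{k_1}(z_0/z_i^{(1)})_a(qz_i^{(1)}/z_0)_b$, and apply the same rewriting $(z_0/z_i^{(1)})_a(qz_i^{(1)}/z_0)_b=(-z_0/z_i^{(1)})^aq^{\binom{a}{2}}(q^{1-a}z_i^{(1)}/z_0)_{a+b}$ followed by the $q$-binomial theorem. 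This lands me at exactly the expression~\eqref{NP-2}, now for the range $l>k_2a$ rather than $l<0$:
\[
[z_0^{l}]L_{k_1,k_2}(a,b,c)
=\CT_{z^{(1)}}\sum_{\substack{t_1,\dots,t_{k_1}\geq 0\\ |t|=k_1a-l}}
\prod_{i=1}^{k_1}(-z^{(1)}_i)^{t_i-a}
q^{\binom{a}{2}+\binom{t_i}{2}+(1-a)t_i}\qbinom{a+b}{t_i}\times F_{k_1,k_2}(c).
\]

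The key difference from Lemma~\ref{lem-L1} is the mechanism of vanishing. There the summand was inhomogeneous and so each constant term in $z^{(1)}$ died trivially; here, since $|t|=k_1a-l$ can be nonnegative, homogeneity alone will not suffice and I must instead show that the constant term of each summand vanishes through the more refined Corollary~\ref{cor-1}. Each summand has the shape $\prod_{i=1}^{k_1}(z_i^{(1)})^{t_i-a}$ times $F_{k_1,k_2}(c)$, which is precisely the integrand appearing in Corollary~\ref{cor-1} with the exponent $t_i$ there identified as $a-t_i$ in my notation (so that the denominator power of $z_i^{(1)}$ is $a-t_i$). I would therefore set the corollary's parameters to $t_i^{\mathrm{cor}}=a-t_i$ and verify the two hypotheses: the bound $t_i^{\mathrm{cor}}\leq a$ holds because $t_i\geq 0$, and the summation condition $\sum_i t_i^{\mathrm{cor}}>k_2 a$ becomes $\sum_i(a-t_i)=k_1a-|t|=l>k_2a$, which is exactly the hypothesis of the lemma. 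With $k_1>k_2$ (or separately handling $k_1=k_2$, where the denominator product still makes sense and the constant term is over a genuinely larger number of variables) the corollary applies and each summand's constant term vanishes.

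The main obstacle I anticipate is the boundary case $k_1=k_2$, since Corollary~\ref{cor-1} is stated only for $k_1>k_2$. I would need to check whether the argument still closes when $k_1=k_2$; one natural route is to observe that with $l>k_2a=k_1a$ the condition $|t|=k_1a-l<0$ is impossible for nonnegative $t_i$, so the sum in the displayed identity is empty and the coefficient vanishes for trivial reasons. Thus the $k_1=k_2$ case is handled by the emptiness of the index set, and the $k_1>k_2$ case by Corollary~\ref{cor-1}; together these cover all $k_1\geq k_2$. The only genuine care required is bookkeeping: confirming that the substitution $t_i^{\mathrm{cor}}=a-t_i$ respects the integrality and sign conventions of the corollary and that $F_{k_1,k_2}(c)$ matches the integrand there (it does, being exactly~\eqref{defi-F}). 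I do not expect any deeper difficulty beyond this translation of indices.
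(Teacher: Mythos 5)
Your proposal is correct and follows essentially the same route as the paper: both extract the coefficient via \eqref{NP-2}, perform the substitution $t_i\mapsto a-t_i$ so that the hypotheses of Corollary~\ref{cor-1} (namely $t_i\leq a$ and $\sum_i t_i=l>k_2a$) are met, and treat $k_1=k_2$ separately since the corollary requires $k_1>k_2$. Your handling of the boundary case by the emptiness of the index set $|t|=k_1a-l<0$ is just a more explicit phrasing of the paper's remark that this case is clear from the definition (the degree of $z_0$ is at most $k_1a$).
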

\begin{proof}
It is clear that the lemma holds for $k_2=k_1$ by the definition of $L_{k_1,k_2}(a,b,c)$.
In the following of the proof, we assume $k_1>k_2$.
Changing $t_i\mapsto a-t_i$ for each $i$ in \eqref{NP-2}, we have
\begin{multline}\label{e-lem-L2-1}
[z_0^{l}]L_{k_1,k_2}(a,b,c)
=\sum_{\substack{t_1,\dots,t_{k_1}\leq a\\ |t|=l}}
\prod_{i=1}^{k_1}(-1)^{-t_i}
q^{\binom{a}{2}+\binom{a-t_i}{2}+(1-a)(a-t_i)}\qbinom{a+b}{a-t_i}\\
\times \CT_{z^{(1)}}\frac{\prod_{1\leq i<j\leq k_1}(z_i^{(1)}/z_j^{(1)})_c(qz_j^{(1)}/z_i^{(1)})_c}
{\prod_{i=1}^{k_1}(z^{(1)}_i)^{t_i}\prod_{j=1}^{k_2}\prod_{i=1}^{k_1}(z_i^{(1)}/z_j^{(2)})_c},
\end{multline}
where $|t|=\sum_{i=1}^{k_1}t_i$.
All the constant term in the sum of \eqref{e-lem-L2-1} vanish by Corollary~\ref{cor-1} if $l=|t|=\sum_{i=1}^{k_1}t_i>k_2a$.
Then the lemma follows.
\end{proof}

In the following of this subsection, we show that
$\prod_{l=1}^{k_2}(q^{c-1-b}z_0/z_l^{(2)})_{a+b+1-c}$ is a factor of $L_{k_1,k_2}(a,b,c)$.
The next lemma implies this.
\begin{lem}\label{lem-L3}
If $b+1\leq c\leq a+b+1$ and $k_1\geq k_2$, then
\[
L_{k_1,k_2}(a,b,c)=0
\]
by taking $z_0=z_l^{(2)}q^m$ for $l=1,\dots,k_2$ and $m=1-a,\dots,b+1-c$.
\end{lem}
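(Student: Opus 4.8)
The goal is to show that $L_{k_1,k_2}(a,b,c)$ vanishes under the substitution $z_0 = z_l^{(2)}q^m$ for each fixed $l\in\{1,\dots,k_2\}$ and each $m$ in the range $1-a\le m\le b+1-c$. The natural strategy is to return to the explicit constant-term expansion already derived for the positive powers in the proof of Lemma~\ref{lem-L2}, and show that substituting $z_0 = z_l^{(2)}q^m$ produces a quantity that one can match, up to harmless factors, against one of the vanishing constant terms supplied by Lemma~\ref{lem-vanish} (or its corollaries). The key observation is that the substitution $z_0 = z_l^{(2)}q^m$ ties the distinguished variable $z_0$ to one of the ``denominator'' variables $z_l^{(2)}$, and this should allow an application of the splitting formula (Proposition~\ref{prop-split}) to the resulting rational function in $z^{(1)}$.

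First I would substitute $z_0=z_l^{(2)}q^m$ directly into the definition \eqref{defi-L} of $L_{k_1,k_2}(a,b,c;z^{(1)},z^{(2)})$ before taking the constant term. Under this substitution the numerator factors $(z_0/z_i^{(1)})_a(qz_i^{(1)}/z_0)_b$ become $(q^m z_l^{(2)}/z_i^{(1)})_a(q^{1-m}z_i^{(1)}/z_l^{(2)})_b$, which for the prescribed range of $m$ combine with one of the denominator factors $(z_i^{(1)}/z_l^{(2)})_c$ to create partial cancellation. The arithmetic here is exactly of the type handled by the identities \eqref{prop-b1}--\eqref{prop-b2}: one rewrites ratios of $q$-factorials of the shape $(1/y)_i(qy)_j/(q^{-t}/y)_i$ as a clean product. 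I expect that, after this cancellation and a reindexing $t_i\mapsto a-t_i$ as in \eqref{e-lem-L2-1}, the constant term in $z^{(1)}$ that one must evaluate falls precisely into the format of Lemma~\ref{lem-vanish}, with the exponents $t_u$ of the $z^{(1)}_u$ shifted in a controlled way by the substitution.

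The decisive step is the bookkeeping of exponents. I would track, for each monomial contribution, the total degree in the $z^{(1)}$ variables after the substitution, and verify that the hypothesis $\sum_{l}b_l+\sum_{l}d_l>0$ of Lemma~\ref{lem-vanish} (equivalently the non-homogeneity/degree-excess condition of Corollary~\ref{cor-1}) is forced by the range $1-a\le m\le b+1-c$ together with $b+1\le c\le a+b+1$ and $k_1\ge k_2$. The conditions $c\le a+b+1$ and $b+1\le c$ are exactly what guarantees the partial-fraction cancellation is of the valid length, so that the residual denominator has one fewer $z_l^{(2)}$-factor than before; this drop in the number of denominator variables is what tips the degree count across the threshold and triggers vanishing.

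The main obstacle will be the careful verification that, after the $q$-shift identities are applied and the constant term in $z_0$ is extracted, every surviving summand genuinely satisfies the inequality needed to invoke the vanishing lemma --- in particular, confirming that the removal of the factor associated to $z_l^{(2)}$ effectively reduces the relevant count $k_2$ by one while keeping all exponents in the admissible range. I expect no genuinely new idea is required beyond \eqref{prop-b1}--\eqref{prop-b2} and Lemma~\ref{lem-vanish}; the difficulty is entirely in organizing the substitution so that the hypotheses of Lemma~\ref{lem-vanish} are transparently met, and in handling the boundary cases $m=1-a$ and $m=b+1-c$ of the range, where the $q$-factorial manipulations degenerate and must be checked separately.
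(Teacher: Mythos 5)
Your proposal follows essentially the same route as the paper's proof: substitute $z_0=z_l^{(2)}q^m$, use the $q$-factorial manipulations to see that the prescribed range of $m$ makes $(z_i^{(1)}/z_l^{(2)})_c$ a factor of the combined numerator factor $(q^{1-a-m}z_i^{(1)}/z_l^{(2)})_{a+b}$ so that the $l$-th block of denominator factors cancels, and then conclude by exponent bookkeeping (every $z_i^{(1)}$ ends up with a strictly negative exponent since $a+b-c\leq a-1$) together with the vanishing result for $k_2-1<k_1$ denominator variables, which is exactly Corollary~\ref{cor-2} of Lemma~\ref{lem-vanish}. The only cosmetic difference is that you suggest invoking Proposition~\ref{prop-split} directly, whereas the paper uses it only indirectly through Lemma~\ref{lem-vanish}; this does not change the substance of the argument.
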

\begin{proof}
Carrying out the substitution $z_0=z_l^{(2)}q^m$ in $L_{k_1,k_2}(a,b,c)$ yields
\begin{align*}
L:&=L_{k_1,k_2}(a,b,c)\big|_{z_0=z_l^{(2)}q^m}\\
&=\CT_{z^{(1)}}\frac{\prod_{i=1}^{k_1}(q^mz_l^{(2)}/z_i^{(1)})_a(q^{1-m}z_i^{(1)}/z_l^{(2)})_b
\prod_{1\leq i<j\leq k_1}(z_i^{(1)}/z_j^{(1)})_c(qz_j^{(1)}/z_i^{(1)})_c}
{\prod_{j=1}^{k_2}\prod_{i=1}^{k_1}(z_i^{(1)}/z_j^{(2)})_c}\\
&=\CT_{z^{(1)}}\frac{\prod_{i=1}^{k_1}(-z_l^{(2)}/z_i^{(1)})^aq^{ma+\binom{a}{2}}
(q^{1-a-m}z_i^{(1)}/z_l^{(2)})_{a+b}
\prod_{1\leq i<j\leq k_1}(z_i^{(1)}/z_j^{(1)})_c(qz_j^{(1)}/z_i^{(1)})_c}
{\prod_{i=1}^{k_1}(z_i^{(1)}/z_l^{(2)})_c\prod_{\substack{j=1\\j\neq l}}^{k_2}\prod_{i=1}^{k_1}(z_i^{(1)}/z_j^{(2)})_c}.
\end{align*}
Since $1-a-m\leq 1-a-(-a+1)=0$ and $1-a-m+(a+b-1)=b-m\geq b-(b+1-c)=c-1$,
$(z_i^{(1)}/z_l^{(2)})_c$ is in fact a factor of $(q^{1-a-m}z_i^{(1)}/z_l^{(2)})_{a+b}$ for $i=1,\dots,k_1$.
Hence,
\begin{multline}\label{e-lem-L3-1}
L=\CT_{z^{(1)}}\prod_{i=1}^{k_1}(-z_l^{(2)}/z_i^{(1)})^aq^{ma+\binom{a}{2}}
(q^{1-a-m}z_i^{(1)}/z_l^{(2)})_{a+m-1}(q^{c}z_i^{(1)}/z_l^{(2)})_{b-m-c+1}\\
\times
\frac{\prod_{1\leq i<j\leq k_1}(z_i^{(1)}/z_j^{(1)})_c(qz_j^{(1)}/z_i^{(1)})_c}
{\prod_{\substack{j=1\\j\neq l}}^{k_2}\prod_{i=1}^{k_1}(z_i^{(1)}/z_j^{(2)})_c}.
\end{multline}
From $b+1\leq c$, we have $a+m-1+(b-m-c+1)=a+b-c\leq a-1$.
Then, by expanding $(q^{1-a-m}z_i^{(1)}/z_l^{(2)})_{a+m-1}(q^{c}z_i^{(1)}/z_l^{(2)})_{b-m-c+1}$
in the right-hand side of \eqref{e-lem-L3-1} we know that
$L$ is a finite sum of the form
\begin{equation}\label{e-lem-L3-2}
d\cdot (z_l^{(2)})^{\sum_{i=1}^{k_1}t_i}\CT_{z^{(1)}}\frac{\prod_{1\leq i<j\leq k_1}(z_i^{(1)}/z_j^{(1)})_c(qz_j^{(1)}/z_i^{(1)})_c}
{\prod_{i=1}^{k_1}(z_i^{(1)})^{t_i}\prod_{\substack{j=1\\j\neq l}}^{k_2}\prod_{i=1}^{k_1}(z_i^{(1)}/z_j^{(2)})_c},
\end{equation}
where $d\in \mathbb{Q}(q)$ and $t_i\geq 1$ for $i=1,\dots,k_1$.
By Corollary~\ref{cor-2}, every constant term of the form \eqref{e-lem-L3-2} vanishes.
Thus, the lemma follows.
\end{proof}

To show that $L_{k_1,k_2}(a,b,c)$ is of the form \eqref{e-main},
the last step is to prove
\[
[z_0^0]L_{k_1,k_2}(a,b,c)=M_{k_1}(a,b,c).
\]
By the expression for $L_{k_1,k_2}(a,b,c)$ in \eqref{defi-L1},
\begin{equation*}
[z_0^0]L_{k_1,k_2}(a,b,c)=\CT_{z_0,z^{(1)}}\frac{\prod_{i=1}^{k_1}(z_0/z_i^{(1)})_a(qz_i^{(1)}/z_0)_b
\prod_{1\leq i<j\leq k_1}(z_i^{(1)}/z_j^{(1)})_c(qz_j^{(1)}/z_i^{(1)})_c}
{\prod_{j=1}^{k_2}\prod_{i=1}^{k_1}(z_i^{(1)}/z_j^{(2)})_c}.
\end{equation*}
Only those terms that are homogeneous in $z_0$ and the $z_i^{(1)}$ with (total) degree 0 contribute to the constant term. Thus, only the first term (i.e., 1) of the expansion of the denominator
$\prod_{j=1}^{k_2}\prod_{i=1}^{k_1}(z_i^{(1)}/z_j^{(2)})^{-1}_c$
contributes to $[z_0^0]L_{k_1,k_2}(a,b,c)$.
Then, by the $q$-Morris identity \eqref{q-Morris}
\begin{equation*}
[z_0^0]L_{k_1,k_2}(a,b,c)=\CT_{z_0,z^{(1)}}\prod_{i=1}^{k_1}(z_0/z_i^{(1)})_a(qz_i^{(1)}/z_0)_b
\prod_{1\leq i<j\leq k_1}(z_i^{(1)}/z_j^{(1)})_c(qz_j^{(1)}/z_i^{(1)})_c=M_{k_1}(a,b,c).
\end{equation*}

By Lemmas \ref{lem-L1}--\ref{lem-L3} and the fact that $[z_0^0]L_{k_1,k_2}(a,b,c)=M_{k_1}(a,b,c)$,
it is straightforward to obtain that
$L_{k_1,k_2}(a,b,c)$ is of the form \eqref{e-main} for $b+1\leq c\leq a+b+1$ and $k_1\geq k_2$.

\section{The $b=c-1$ case}\label{sec-bc1}

In this section, we are concerned about the $b=c-1$ case of \eqref{e-main}.

Taking $b=c-1$ in \eqref{e-main} we obtain an explicit constant term identity.
\begin{lem}\label{lem-main-1}
For $k_1\geq k_2$,
\begin{multline}\label{e-bc}
L_{k_1,k_2}(a,c-1,c)
=\CT_{z^{(1)}}\frac{\prod_{i=1}^{k_1}(z_0/z_i^{(1)})_a(qz_i^{(1)}/z_0)_{c-1}
\prod_{1\leq i<j\leq k_1}(z_i^{(1)}/z_j^{(1)})_c(qz_j^{(1)}/z_i^{(1)})_c}
{\prod_{j=1}^{k_2}\prod_{i=1}^{k_1}(z_i^{(1)}/z_j^{(2)})_c}\\
=M_{k_1}(a,c-1,c)\prod_{j=1}^{k_2}(z_0/z^{(2)}_j)_{a}.
\end{multline}
\end{lem}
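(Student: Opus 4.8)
The plan is to read off \eqref{e-bc} as the $b=c-1$ specialization of the structural formula \eqref{e-main} that was established at the end of Section~\ref{sec-expression}. The first step is to confirm that the parameters $b=c-1$ fall within the admissible range $b+1\le c\le a+b+1$ demanded there: the left inequality becomes $c\le c$ and the right becomes $c\le a+c$, both valid for every nonnegative integer $a$ and positive integer $c$. Since $k_1\ge k_2$ is assumed in the lemma as well, all hypotheses of \eqref{e-main} are in force, so I am entitled to write $L_{k_1,k_2}(a,c-1,c)$ in the form \eqref{e-main} with $b=c-1$.

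Next I would substitute $b=c-1$ into the three ingredients of \eqref{e-main} and watch each one simplify. The shift parameter collapses, $q^{c-1-b}=q^{0}=1$; the length of the Pochhammer factor becomes $a+b+1-c=a$, so $\prod_{j=1}^{k_2}(q^{c-1-b}z_0/z_j^{(2)})_{a+b+1-c}$ turns into $\prod_{j=1}^{k_2}(z_0/z_j^{(2)})_a$; and, decisively, the degree $k_2(c-1-b)$ of the correction polynomial $1+C_1z_0+\dots+C_{k_2(c-1-b)}z_0^{k_2(c-1-b)}$ drops to $0$. Hence that polynomial reduces to its constant term, which is $1$. Together with the prefactor $M_{k_1}(a,c-1,c)$ this yields exactly $M_{k_1}(a,c-1,c)\prod_{j=1}^{k_2}(z_0/z_j^{(2)})_a$, the claimed value.

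I expect no genuine obstacle, since the substantive content has already been supplied in Section~\ref{sec-expression}: Lemmas \ref{lem-L1} and \ref{lem-L2} show that $L_{k_1,k_2}(a,b,c)$ is a polynomial in $z_0$ of degree at most $k_2a$, Lemma \ref{lem-L3} shows that $\prod_{j=1}^{k_2}(q^{c-1-b}z_0/z_j^{(2)})_{a+b+1-c}$ divides it, and the direct $q$-Morris computation shows $[z_0^0]L_{k_1,k_2}(a,b,c)=M_{k_1}(a,b,c)$. The one point that merits a moment's care is the degree bookkeeping that forces the correction factor to disappear: the total $z_0$-degree is at most $k_2a$, while the Pochhammer factor already contributes degree $k_2(a+b+1-c)$, leaving a correction of degree at most $k_2(c-1-b)$, which is $0$ precisely when $b=c-1$. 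Once this count is recorded, \eqref{e-bc} follows immediately by specialization, and no further estimation or induction is needed.
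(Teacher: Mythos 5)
Your proposal is correct and takes exactly the paper's route: the paper obtains Lemma~\ref{lem-main-1} precisely by setting $b=c-1$ in the structural form \eqref{e-main} established at the end of Section~\ref{sec-expression}, which is what you do. Your verification that $b=c-1$ satisfies $b+1\le c\le a+b+1$ and that the correction polynomial has degree $k_2(c-1-b)=0$ is the same (largely implicit) bookkeeping the paper relies on.
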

An immediate consequence of Lemma~\ref{lem-main-1} is the next result.
\begin{prop}\label{prop-1}
For $k_1\geq k_2$, let $\lambda$ be a partition such that $\ell(\lambda)\leq k_1-k_2$.
Then
\begin{align}\label{e-h}
&\CT_{z^{(1)}}\frac{h_{\lambda}\big[\frac{1-q^c}{1-q}Z^{(1)}\big]\prod_{i=1}^{k_1}(z_0/z_i^{(1)})_a(qz_i^{(1)}/z_0)_{c-1}
\prod_{1\leq i<j\leq k_1}(z_i^{(1)}/z_j^{(1)})_c(qz_j^{(1)}/z_i^{(1)})_c}
{z_0^{|\lambda|}\prod_{j=1}^{k_2}\prod_{i=1}^{k_1}(z_i^{(1)}/z_j^{(2)})_c}\\
&\quad=(-1)^{|\lambda|}\prod_{i=1}^{\ell(\lambda)}q^{\binom{\lambda_i}{2}}\qbinom{a}{\lambda_i}
\times M_{k_1}(a,c-1,c)\prod_{j=1}^{k_2}(z_0/z^{(2)}_j)_{a}\nonumber \\
&\quad=h_{\lambda}\Big[\frac{q^a-1}{1-q}\Big]\times M_{k_1}(a,c-1,c)\prod_{j=1}^{k_2}(z_0/z^{(2)}_j)_{a}\nonumber,
\end{align}
where $Z^{(1)}=z_1^{(1)}+\cdots+z_{k_1}^{(1)}$.
\end{prop}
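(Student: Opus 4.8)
The strategy is to prove Proposition~\ref{prop-1} by reducing it to Lemma~\ref{lem-main-1} through expansion of the symmetric function $h_\lambda$ on the power-sum (or rather, on the modified complete-homogeneous) basis, combined with a Cauchy-type kernel argument. The key observation is that the factor $\prod_{i=1}^{k_1}(z_0/z_i^{(1)})_a(qz_i^{(1)}/z_0)_{c-1}$ together with the Vandermonde-type products forms exactly the integrand of Lemma~\ref{lem-main-1}, and the extra symmetric function $h_\lambda\big[\frac{1-q^c}{1-q}Z^{(1)}\big]$ is a polynomial in the $z_i^{(1)}$ that we must learn to integrate against this kernel.

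\textbf{Step 1: Reduce to the case of a single part.} First I would exploit the multiplicativity $h_\lambda = h_{\lambda_1}h_{\lambda_2}\cdots$ and the addition formula \eqref{e-xy} to understand the structure. The cleanest route, however, is to insert the \emph{generating function}
\begin{equation*}
\sum_{r\geq 0} g_r(Z^{(1)};q,q^c)\, y^r = \prod_{i=1}^{k_1}\frac{(q^c z_i^{(1)}y;q)_\infty}{(z_i^{(1)}y;q)_\infty},
\end{equation*}
since $g_r(X;q,t)=h_r\big[\frac{1-t}{1-q}X\big]$ by \eqref{modi-complete}, so that $g_r(Z^{(1)};q,q^c)=h_r\big[\frac{1-q^c}{1-q}Z^{(1)}\big]$ is precisely the $r$-part case of the symmetric function appearing in \eqref{e-h}. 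The idea is then to prove the $\lambda=(r)$ case of \eqref{e-h} and to bootstrap to general $\lambda$ by iterating: apply the single-part result, observe that the right-hand side reproduces a kernel of the same shape (with $a$ shifted), and induct on $\ell(\lambda)$.

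\textbf{Step 2: Integrate a single $g_r$ against the kernel.} For $\lambda=(r)$ the left-hand side is $z_0^{-r}$ times the constant term of $g_r(Z^{(1)};q,q^c)$ against the Lemma~\ref{lem-main-1} kernel. The plan is to pair the factor $(q^c z_i^{(1)}y;q)_\infty/(z_i^{(1)}y;q)_\infty$ with the existing $q$-factorials and recognize that multiplying by the $g_r$ generating kernel is equivalent to shifting the parameter $a$: concretely, the product $\prod_i (z_0/z_i^{(1)})_a$ absorbs the new factor after the substitution $y\mapsto$ (something proportional to $1/z_0$), so that the whole constant term becomes a specialization of Lemma~\ref{lem-main-1} with a modified $a$. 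Extracting the coefficient of $y^r$ should then reproduce $M_{k_1}(a,c-1,c)\prod_j(z_0/z_j^{(2)})_a$ times the claimed scalar. The scalar itself, $h_\lambda\big[\frac{q^a-1}{1-q}\big]=\prod_i(-1)^{\lambda_i}q^{\binom{\lambda_i}{2}}\qbinom{a}{\lambda_i}$, comes from \eqref{e-he} and \eqref{modi-complete} via $g_r\big[\frac{q^a-1}{1-q}\big]=h_r\big[\frac{1-q^c}{1-q}\cdot\frac{q^a-1}{1-q^c}\big]$, i.e.\ a plethystic evaluation at the alphabet $q^a-1=-(1-q^a)$, which by \eqref{e-he} yields the stated $q$-binomial product.

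\textbf{The main obstacle.} The delicate point is justifying that multiplying the kernel by the symmetric function and then taking the constant term is the same as evaluating the symmetric function at a specific finite alphabet determined by $z_0$ and the $z_j^{(2)}$. Equivalently, I must show that, against the Lemma~\ref{lem-main-1} weight, the ``integration'' map sends $g_r(Z^{(1)};q,q^c)$ to $g_r$ evaluated at the effective alphabet $\frac{q^a-1}{1-q}$ (a \emph{binomial element}, in Lascoux's terminology). This is a plethystic substitution principle: the constant-term functional acts as a homomorphism on the ring of symmetric functions in $Z^{(1)}$ that is determined by its values on the $g_r$, and those values are read off from Lemma~\ref{lem-main-1} after inserting the generating function and matching $z_0$-degrees. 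Making this rigorous requires care that the interchange of the constant-term operator with the infinite sum over $r$ is legitimate (guaranteed because for each fixed power of $z_0$ only finitely many $r$ contribute, by the degree bounds $[z_0^l]=0$ for $l<0$ and $l>k_2a$ established in Lemmas~\ref{lem-L1} and \ref{lem-L2}), and that the shift in $a$ lands in the range $b+1\le c\le a+b+1$ where Lemma~\ref{lem-main-1} applies. Once the single-part evaluation is established, the general-$\lambda$ case follows by the multiplicativity of both sides together with the factorization of the scalar $h_\lambda\big[\frac{q^a-1}{1-q}\big]=\prod_i g_{\lambda_i}\big[\frac{q^a-1}{1-q}\big]$.
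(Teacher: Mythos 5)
Your opening move is the right one, and it is in fact the paper's mechanism in disguise: the generating function $\sum_{r\ge 0}g_r(Z^{(1)};q,q^c)\,y^r=\prod_{i=1}^{k_1}(z_i^{(1)}y)_c^{-1}$, evaluated at $y=1/w$ for a \emph{new variable} $w$, is exactly one more column $\prod_{i=1}^{k_1}(z_i^{(1)}/w)_c$ of the denominator in \eqref{defi-L}. So inserting one such kernel per part of $\lambda$ realizes $h_\lambda\big[\frac{1-q^c}{1-q}Z^{(1)}\big]$ as coefficient extraction in $\ell(\lambda)$ auxiliary variables $z^{(2)}_{k_2+1},\dots,z^{(2)}_{k_2+\ell(\lambda)}$, and then a \emph{single} application of Lemma~\ref{lem-main-1} with $k_2\mapsto k_2+\ell(\lambda)$ finishes the job --- this is precisely where the hypothesis $\ell(\lambda)\le k_1-k_2$ enters. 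The factorized right-hand side $\prod_{j=1}^{k_2+\ell(\lambda)}(z_0/z^{(2)}_j)_a$ then lets you extract the coefficient of $(z^{(2)}_{k_2+j})^{-\lambda_j}$ independently in each auxiliary variable via the $q$-binomial theorem, producing the scalar $(-1)^{\lambda_j}q^{\binom{\lambda_j}{2}}\qbinom{a}{\lambda_j}$; your plethystic identification of this scalar with $h_\lambda\big[\frac{q^a-1}{1-q}\big]$ via \eqref{e-he} is correct.

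However, your Step 2 substitutes a mechanism that does not work: multiplying by the generating kernel is \emph{not} equivalent to shifting $a$. A shift $a\mapsto a+r$ would mean multiplying the integrand by the polynomial $\prod_i(q^az_0/z_i^{(1)})_r$, whereas the kernel contributes inverse factors $\prod_i\prod_{m=0}^{c-1}(1-q^myz_i^{(1)})^{-1}$, with $z_i^{(1)}$ in the ``wrong'' position; these cannot merge with $\prod_i(z_0/z_i^{(1)})_a$ for any substitution $y\propto 1/z_0$, so ``Lemma~\ref{lem-main-1} with a modified $a$'' is not what the insertion produces (and the range remark about $b+1\le c\le a+b+1$ is moot, since Lemma~\ref{lem-main-1} is the $b=c-1$ case, valid for all $a\ge 0$). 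The second gap is your reduction to a single part: the constant-term functional is linear, not multiplicative, so the $\lambda=(r)$ case does not formally imply the multi-part case, and your appeal to a ``plethystic substitution principle'' under which ``the constant-term functional acts as a homomorphism determined by its values on the $g_r$'' assumes exactly what Proposition~\ref{prop-1} asserts; likewise the claim that after one insertion ``the right-hand side reproduces a kernel of the same shape (with $a$ shifted)'' is unsubstantiated. The tell is that the hypothesis $\ell(\lambda)\le k_1-k_2$ plays no role anywhere in your argument, while in the correct proof it is the sole reason Lemma~\ref{lem-main-1} is applicable after enlarging $k_2$; replacing your $a$-shift and induction by the simultaneous insertion of $\ell(\lambda)$ auxiliary variables repairs the proof and makes the interchange-of-sums worry (your appeal to Lemmas~\ref{lem-L1} and \ref{lem-L2}) unnecessary, since only finitely many coefficient extractions are performed.
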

\begin{proof}
Denote by $L$ the constant term in \eqref{e-h}.
By the generating function for the complete symmetric functions in \eqref{e-gfcomplete},
\begin{multline*}
L=\CT_{z^{(1)},z_{k_2+1}^{(2)},\dots,z_{k_2+\ell(\lambda)}^{(2)}}
\prod_{j=1}^{\ell(\lambda)}(z_{k_2+j}^{(2)})^{\lambda_j}
\prod_{i=1}^{k_1}(z_0/z_i^{(1)})_a(qz_i^{(1)}/z_0)_{c-1}
\\
\times \frac{\prod_{1\leq i<j\leq k_1}(z_i^{(1)}/z_j^{(1)})_c(qz_j^{(1)}/z_i^{(1)})_c}
{z_0^{|\lambda|}\prod_{j=1}^{k_2+\ell(\lambda)}\prod_{i=1}^{k_1}(z_i^{(1)}/z_j^{(2)})_c}.
\end{multline*}
Since $k_2+\ell(\lambda)\leq k_1$, we can apply Lemma~\ref{lem-main-1} with
$k_2\mapsto k_2+\ell(\lambda)$ and obtain
\begin{align*}
L&=\CT_{z_{k_2+1}^{(2)},\dots,z_{k_2+\ell(\lambda)}^{(2)}}
z_0^{-|\lambda|}\prod_{j=1}^{\ell(\lambda)}(z_{k_2+j}^{(2)})^{\lambda_j}
M_{k_1}(a,c-1,c)\prod_{j=1}^{k_2+\ell(\lambda)}(z_0/z^{(2)}_j)_{a}\\
&=M_{k_1}(a,c-1,c)\prod_{j=1}^{k_2}(z_0/z^{(2)}_j)_{a}
\CT_{z_{k_2+1}^{(2)},\dots,z_{k_2+\ell(\lambda)}^{(2)}}
z_0^{-|\lambda|}\prod_{j=1}^{\ell(\lambda)}(z_{k_2+j}^{(2)})^{\lambda_j}(z_0/z^{(2)}_{k_2+j})_a.
\end{align*}
Using the well-known $q$-binomial theorem \cite[Theorem 3.3]{andrew-qbinomial}
\[
(y)_n=\sum_{t\geq 0}q^{\binom{t}{2}}\qbinom{n}{t}(-y)^t,
\]
we have
\begin{align*}
L&=M_{k_1}(a,c-1,c)\prod_{j=1}^{k_2}(z_0/z^{(2)}_j)_{a}
\CT_{z_{k_2+1}^{(2)},\dots,z_{k_2+\ell(\lambda)}^{(2)}}
z_0^{-|\lambda|}\prod_{j=1}^{\ell(\lambda)}(z_{k_2+j}^{(2)})^{\lambda_j}
\sum_{t_j\geq 0}q^{\binom{t_j}{2}}\qbinom{a}{t_j}(-z_0/z^{(2)}_{k_2+j})^{t_j}\\
&=M_{k_1}(a,c-1,c)\prod_{j=1}^{k_2}(z_0/z^{(2)}_j)_{a}\times
(-1)^{|\lambda|}\prod_{i=1}^{\ell(\lambda)}q^{\binom{\lambda_i}{2}}\qbinom{a}{\lambda_i}.
\end{align*}
The right-most equality of \eqref{e-h} follows from \eqref{e-h-1} below.
\end{proof}
We complete the proof of Proposition~\ref{prop-1} by the next result.
\begin{lem}
For $a$ and $r$ nonnegative integers,
\begin{equation}\label{e-h-1}
h_{r}\Big[\frac{q^a-1}{1-q}\Big]=
(-1)^{r}q^{\binom{r}{2}}\qbinom{a}{r}.
\end{equation}
\end{lem}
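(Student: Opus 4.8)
The plan is to reduce the identity to a finite $q$-binomial expansion by first rewriting the plethystic argument as a genuine finite alphabet. I would start from $\frac{q^a-1}{1-q}=-\frac{1-q^a}{1-q}$ and observe that $\frac{1-q^a}{1-q}=1+q+q^2+\cdots+q^{a-1}$ is the finite alphabet $X_a:=\{1,q,q^2,\dots,q^{a-1}\}$ consisting of $a$ single letters. This is consistent with the interpretation of division by $1-q$ recorded after \eqref{division}, where $1/(1-q)$ is the infinite alphabet $1+q+q^2+\cdots$; multiplying by $1-q^a$ cancels all powers $q^a,q^{a+1},\dots$ and leaves exactly the $a$ terms of $X_a$.

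Next I would apply the second identity of Proposition~\ref{Ple-basic}, namely \eqref{e-he}, to turn the complete symmetric function into an elementary one:
\[
h_r\Big[\frac{q^a-1}{1-q}\Big]=h_r[-X_a]=(-1)^re_r[X_a].
\]
It then remains to evaluate $e_r$ at the specialization $x_i=q^{i-1}$ for $i=1,\dots,a$. For this I would use the finite $q$-binomial theorem already invoked in the excerpt: replacing $y$ by $-y$ in $(y)_a=\sum_{t\geq 0}q^{\binom{t}{2}}\qbinom{a}{t}(-y)^t$ gives the generating function $\prod_{i=0}^{a-1}(1+q^iy)=\sum_{r\geq 0}q^{\binom{r}{2}}\qbinom{a}{r}y^r$. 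Since the left-hand side is precisely $\sum_{r\geq 0}e_r[X_a]\,y^r$ by the definition of the elementary symmetric functions, comparing the coefficient of $y^r$ yields $e_r[X_a]=q^{\binom{r}{2}}\qbinom{a}{r}$. Substituting this into the previous display produces \eqref{e-h-1}.

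The only step requiring genuine care is the bookkeeping in the first paragraph that confirms $\frac{1-q^a}{1-q}$ is the finite alphabet with exactly $a$ letters; this is what forces $e_r[X_a]=0$ for $r>a$, in agreement with the vanishing of $\qbinom{a}{r}$ in that range. Once this identification is in place, the remaining manipulations are an immediate application of \eqref{e-he} together with the $q$-binomial theorem, so I do not expect any serious obstacle.
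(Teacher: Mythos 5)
Your proof is correct and follows essentially the same route as the paper: convert $h_r$ to $e_r$ via \eqref{e-he} and identify the alphabet $\frac{1-q^a}{1-q}$ with the finite alphabet $1,q,\dots,q^{a-1}$. The only difference is that where the paper cites \cite[p.~26, Example~3]{Mac95} for the evaluation $e_r(1,q,\dots,q^{a-1})=q^{\binom{r}{2}}\qbinom{a}{r}$, you derive it yourself from the $q$-binomial theorem, a harmless self-contained substitute for the citation.
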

\begin{proof}
By \eqref{e-he},
\[
h_{r}\Big[\frac{q^a-1}{1-q}\Big]=
(-1)^re_r\Big[\frac{1-q^a}{1-q}\Big]
=(-1)^re_r(1,q,\dots,q^{a-1})=(-1)^{r}q^{\binom{r}{2}}\qbinom{a}{r}.
\]
Here the last equality holds by \cite[Page 26, Example 3]{Mac95}.
\end{proof}

By \eqref{modi-complete}, we can rewrite \eqref{e-h} as
\begin{multline}\label{e-h-2}
\CT_{z^{(1)}}\frac{g_{\lambda}(z^{(1)};q,q^c)\prod_{i=1}^{k_1}(z_0/z_i^{(1)})_a(qz_i^{(1)}/z_0)_{c-1}
\prod_{1\leq i<j\leq k_1}(z_i^{(1)}/z_j^{(1)})_c(qz_j^{(1)}/z_i^{(1)})_c}
{z_0^{|\lambda|}\prod_{j=1}^{k_2}\prod_{i=1}^{k_1}(z_i^{(1)}/z_j^{(2)})_c}\\
=g_{\lambda}\Big(\Big[\frac{q^a-1}{1-q^c}\Big];q,q^c\Big)\times M_{k_1}(a,c-1,c)\prod_{j=1}^{k_2}(z_0/z^{(2)}_j)_{a}
\end{multline}
for $k_1\geq k_2$ and $\ell(\lambda)\leq k_1-k_2$.
Then, we obtain a few further results.
\begin{prop}\label{thm-2}
For $F=\mathbb{Q}(q,t)$ and partitions $\lambda^{(1)},\dots,\lambda^{(m)}$,
let $f_{\lambda^{(1)}},\dots,f_{\lambda^{(m)}}\in \Lambda_{F}$ be homogeneous symmetric functions
of degrees $|\lambda^{(1)}|,\dots,|\lambda^{(m)}|$ respectively
such that $f_{\lambda^{(i)}}=\sum_{\ell(\mu)\leq \ell(\lambda^{(i)})}d_{\mu}g_{\mu}$ for $i=1,\dots,m$.
Here $d_{\mu}\in F$.
Then, for $k_1\geq k_2$, $\sum_{i=1}^m\ell(\lambda^{(i)})\leq k_1-k_2$ and any alphabets $X_i$ independent of $z^{(1)}$,
\begin{multline}\label{e-thm-2}
\CT_{z^{(1)}}\prod_{i=1}^m f_{\lambda^{(i)}}\big(\big[Z^{(1)}+X_i\big];q,q^c\big)
\frac{\prod_{i=1}^{k_1}(z_0/z_i^{(1)})_a(qz_i^{(1)}/z_0)_{c-1}
\prod_{1\leq i<j\leq k_1}(z_i^{(1)}/z_j^{(1)})_c(qz_j^{(1)}/z_i^{(1)})_c}
{z_0^{\sum_{i=1}^m|\lambda^{(i)}|}\prod_{j=1}^{k_2}\prod_{i=1}^{k_1}(z_i^{(1)}/z_j^{(2)})_c}\\
=\prod_{i=1}^m f_{\lambda^{(i)}}\Big(\Big[\frac{q^a-1}{1-q^c}+\frac{X_i}{z_0}\Big];q,q^c\Big)\times M_{k_1}(a,c-1,c)\prod_{j=1}^{k_2}(z_0/z^{(2)}_j)_{a}.
\end{multline}
\end{prop}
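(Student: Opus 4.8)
The plan is to reduce \eqref{e-thm-2} to the single-$g$ identity \eqref{e-h-2} and then to carry the auxiliary alphabets $X_i$ through the addition formula \eqref{e-g}.

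\emph{Reduction by linearity.} After specializing the parameters to $(q,q^c)$, both sides of \eqref{e-thm-2} are linear in each factor $f_{\lambda^{(i)}}$: the operator $\CT_{z^{(1)}}$ is linear, plethystic evaluation at a fixed alphabet is linear, and the right-hand side is a product with exactly one factor per index $i$. Expanding $f_{\lambda^{(i)}}=\sum_{\ell(\mu)\le\ell(\lambda^{(i)})}d_\mu g_\mu$ and multiplying out $\prod_i f_{\lambda^{(i)}}$ into a linear combination of products $\prod_i g_{\mu^{(i)}}$, it therefore suffices to establish \eqref{e-thm-2} when every $f_{\lambda^{(i)}}$ is a single $g_{\mu^{(i)}}(\,\cdot\,;q,q^c)$ with $\ell(\mu^{(i)})\le\ell(\lambda^{(i)})$; reassembling the same linear combination on both sides then yields the general statement. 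Homogeneity forces $|\mu^{(i)}|=|\lambda^{(i)}|$, so the exponent of $z_0$ is unchanged, and the hypothesis gives the length budget $\sum_i\ell(\mu^{(i)})\le\sum_i\ell(\lambda^{(i)})\le k_1-k_2$.

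\emph{Splitting off $X_i$.} Writing $g_{\mu^{(i)}}=\prod_j g_{\mu^{(i)}_j}$, I would apply \eqref{e-g} to each factor,
\[
g_{\mu^{(i)}_j}\big[Z^{(1)}+X_i\big]=\sum_{s^{(i)}_j=0}^{\mu^{(i)}_j} g_{s^{(i)}_j}\big[Z^{(1)}\big]\,g_{\mu^{(i)}_j-s^{(i)}_j}[X_i],
\]
and expand the full product over all choices $\{s^{(i)}_j\}$. For a fixed choice the $Z^{(1)}$-factors collect (dropping the $g_0=1$ terms) into a single $g_\nu[Z^{(1)}]$, where $\nu$ is the partition of the nonzero $s^{(i)}_j$; thus $\ell(\nu)\le\sum_i\ell(\mu^{(i)})\le k_1-k_2$. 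The factors $\prod_{i,j}g_{\mu^{(i)}_j-s^{(i)}_j}[X_i]$ are independent of $z^{(1)}$ and come out of $\CT_{z^{(1)}}$. Splitting $z_0^{\sum_i|\mu^{(i)}|}=z_0^{|\nu|}\,z_0^{\sum_i|\mu^{(i)}|-|\nu|}$ and applying \eqref{e-h-2} with $\lambda\mapsto\nu$ (legitimate precisely because $\ell(\nu)\le k_1-k_2$) replaces the surviving constant term by $g_\nu[\tfrac{q^a-1}{1-q^c}]\,M_{k_1}(a,c-1,c)\prod_{j=1}^{k_2}(z_0/z^{(2)}_j)_a$.

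\emph{Recombination.} Using $g_\nu[\tfrac{q^a-1}{1-q^c}]=\prod_{i,j}g_{s^{(i)}_j}[\tfrac{q^a-1}{1-q^c}]$ and the homogeneity relation $z_0^{-d}g_d[X_i]=g_d[X_i/z_0]$ from \eqref{e-homo-sym}, each summand becomes a product over $(i,j)$ of $g_{s^{(i)}_j}[\tfrac{q^a-1}{1-q^c}]\,g_{\mu^{(i)}_j-s^{(i)}_j}[X_i/z_0]$. Summing $s^{(i)}_j$ over $0,\dots,\mu^{(i)}_j$ and reading \eqref{e-g} backwards collapses the $(i,j)$ factor to $g_{\mu^{(i)}_j}[\tfrac{q^a-1}{1-q^c}+X_i/z_0]$, and the product over $j$ and $i$ reconstitutes $\prod_i g_{\mu^{(i)}}[\tfrac{q^a-1}{1-q^c}+X_i/z_0]$, which is the desired right-hand side (times $M_{k_1}(a,c-1,c)\prod_j(z_0/z^{(2)}_j)_a$) in the single-$g$ case. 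The expected obstacle is purely the bookkeeping: one must check that the $Z^{(1)}$-part of every term stays within the length budget $k_1-k_2$, so that \eqref{e-h-2} applies term by term, and that the power of $z_0$ peeled off with the $X_i$-factors is exactly the degree $\mu^{(i)}_j-s^{(i)}_j$ feeding the homogeneity identity. Both hold automatically by degree-homogeneity and the hypothesis $\sum_i\ell(\lambda^{(i)})\le k_1-k_2$, so no genuine analytic difficulty arises.
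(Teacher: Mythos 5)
Your proposal is correct and follows essentially the same route as the paper's own proof: reduce by linearity to products of single $g_\mu$'s, split off the $X_i$ via the addition formula \eqref{e-g}, evaluate the surviving constant term with Proposition~\ref{prop-1} (in the form \eqref{e-h-2}), and recombine using homogeneity \eqref{e-homo-sym} and \eqref{e-g} read backwards. Your bookkeeping of which alphabet $X_i$ attaches to which factor during the linearity reduction is in fact slightly more careful than the paper's one-line reduction, but the argument is the same.
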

\begin{proof}
Since the $g_{\nu}$ form a basis of $\Lambda_F$ and $f_{\lambda^{(i)}}=\sum_{\ell(\mu)\leq \ell(\lambda^{(i)})}d_{\mu}g_{\mu}$, we can write
$\prod_{i=1}^mf_{\lambda^{(i)}}=\sum d_{\nu}g_{\nu}$,
where the sum ranges over all partitions $\nu$ such that
$\ell(\nu)\leq \sum_{i=1}^m\ell(\lambda^{(i)})\leq k_1-k_2$.
Then, by the linearity it suffices to prove that
\begin{multline}\label{e-thm2-1}
L:=\CT_{z^{(1)}}z_0^{-|\nu|}\prod_{i=1}^{\ell(\nu)}g_{\nu_i}\big[Z^{(1)}+X_i\big]
\prod_{i=1}^{k_1}(z_0/z_i^{(1)})_a(qz_i^{(1)}/z_0)_{c-1}\times F_{k_1,k_2}(c)\\
=\prod_{i=1}^{\ell(\nu)}g_{\nu_i}\Big[\frac{q^a-1}{1-q^c}+\frac{X_i}{z_0}\Big]\times M_{k_1}(a,c-1,c)\prod_{j=1}^{k_2}(z_0/z^{(2)}_j)_{a}
\end{multline}
for a partition $\nu$ such that $\ell(\nu)\leq k_1-k_2$.
Here and in the following of the proof $g_{\nu}=g_{\nu}(q,q^c)$, and $F_{k_1,k_2}(c)$ and
$M_{k_1}(a,b,c)$ are defined in \eqref{defi-F} and \eqref{defi-M} respectively.
By \eqref{e-g}, we can write $L$ as
\begin{align*}
L&=\CT_{z^{(1)}}z_0^{-|\nu|}\prod_{i=1}^{\ell(\nu)}\sum_{t_i=0}^{\nu_i}g_{t_i}[X_i]g_{\nu_i-t_i}[Z^{(1)}]
\prod_{i=1}^{k_1}(z_0/z_i^{(1)})_a(qz_i^{(1)}/z_0)_{c-1}\times F_{k_1,k_2}(c)\\
&=\sum_{\substack{0\leq t_i\leq \nu_i\\i=1,\dots,\ell(\nu)}}
z_0^{-|t|}\prod_{i=1}^{\ell(\nu)}g_{t_i}[X_i]
\CT_{z^{(1)}}z_0^{-|\nu|+|t|}
\prod_{i=1}^{\ell(\nu)}g_{\nu_i-t_i}[Z^{(1)}]
\prod_{i=1}^{k_1}(z_0/z_i^{(1)})_a(qz_i^{(1)}/z_0)_{c-1}\times F_{k_1,k_2}(c),
\end{align*}
where $|t|=\sum_{i=1}^{\ell(\nu)}t_i$.
By Proposition~\ref{prop-1} or its deformation \eqref{e-h-2},
\begin{align*}
L&=\sum_{\substack{0\leq t_i\leq \nu_i\\i=1,\dots,\ell(\nu)}}z_0^{-|t|}
\prod_{i=1}^{\ell(\nu)}g_{t_i}[X_i]
g_{\nu_i-t_i}\Big[\frac{q^a-1}{1-q^c}\Big]\times M_{k_1}(a,c-1,c)\prod_{j=1}^{k_2}(z_0/z^{(2)}_j)_{a}\\
&=\prod_{i=1}^{\ell(\nu)}\sum_{t_i=0}^{\nu_i}
z_0^{-t_i}g_{t_i}[X_i]g_{\nu_i-t_i}\Big[\frac{q^a-1}{1-q^c}\Big]
\times M_{k_1}(a,c-1,c)\prod_{j=1}^{k_2}(z_0/z^{(2)}_j)_{a}.
\end{align*}
Using \eqref{e-homo-sym} and \eqref{e-g} gives
\[
L=\prod_{i=1}^{\ell(\nu)}g_{\nu_i}\Big[\frac{q^a-1}{1-q^c}+\frac{X_i}{z_0}\Big]
\times M_{k_1}(a,c-1,c)\prod_{j=1}^{k_2}(z_0/z^{(2)}_j)_{a}. \qedhere
\]
\end{proof}

Before giving the next result, we introduce a determinant transformation formula between the $p_n$ and the $h_n$.
\begin{lem}\emph{\cite[Page 28]{Mac95}}
For a nonnegative integer $n$,
\begin{equation}\label{e-det-ph}
(-1)^{n-1}p_n=
\left|\begin{array}{ccccc}
h_1 &1 &0 &\cdots &0\\
2h_2 &h_1 &1 &\cdots &0\\
\vdots &\vdots &\vdots &\vdots &\vdots \\
nh_n &h_{n-1} &h_{n-2} &\cdots &h_1
\end{array}\right|.
\end{equation}
\end{lem}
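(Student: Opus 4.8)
The plan is to derive \eqref{e-det-ph} from Newton's identities relating the $h_r$ and the $p_r$, and then to solve the resulting triangular linear system by Cramer's rule. First I would establish Newton's recurrence
\[
n h_n = \sum_{r=1}^{n} p_r\, h_{n-r}, \qquad n\geq 1,
\]
with the convention $h_0=1$. This follows by taking the logarithmic derivative of the generating function in \eqref{e-gfcomplete}: writing $H(y)=\sum_{r\geq 0}h_r y^r=\prod_i(1-x_iy)^{-1}$, one has $\log H(y)=\sum_{r\geq 1}(p_r/r)\,y^r$, so that $H'(y)=H(y)\sum_{r\geq 1}p_r y^{r-1}$, and comparing the coefficient of $y^{n-1}$ on the two sides yields the recurrence.

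Next I would read this recurrence, for a fixed $n$ and for $1\leq m\leq n$, as a linear system in the unknowns $p_1,\dots,p_n$:
\[
\sum_{r=1}^{m} h_{m-r}\, p_r = m h_m, \qquad m=1,\dots,n.
\]
The coefficient matrix $A=(h_{m-r})_{1\leq m,r\leq n}$ is lower triangular with unit diagonal (since $h_0=1$), so $\det A=1$ and the system has a unique solution. By Cramer's rule, $p_n=\det A^{(n)}$, where $A^{(n)}$ is obtained from $A$ by replacing its last column with the right-hand side vector $(h_1,2h_2,\dots,nh_n)^{\mathrm T}$. Here the $r$-th column of $A$ is $(0,\dots,0,1,h_1,h_2,\dots)^{\mathrm T}$ with the entry $1$ in row $r$.

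Finally I would identify $A^{(n)}$ with the matrix $M$ of \eqref{e-det-ph}. Cyclically moving the last column of $A^{(n)}$ to the front—an operation realized by $n-1$ adjacent column transpositions, hence carrying the sign $(-1)^{n-1}$—turns the $r$-th column of $A$ into the $(r+1)$-th column of the new matrix, while placing $(h_1,2h_2,\dots,nh_n)^{\mathrm T}$ as the first column. A direct check of entries shows the new matrix is exactly $M$, so $\det A^{(n)}=(-1)^{n-1}\det M$. Combining this with $p_n=\det A^{(n)}$ gives $(-1)^{n-1}p_n=\det M$, which is \eqref{e-det-ph}. There is no serious obstacle in this argument; the only step demanding care is the sign bookkeeping for the cyclic column permutation together with the verification that the entries of $A^{(n)}$ match those of $M$ after the shift.
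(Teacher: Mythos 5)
Your proof is correct: the paper itself offers no argument for this lemma (it is quoted directly from Macdonald, p.~28), and your derivation—Newton's recurrence $nh_n=\sum_{r=1}^n p_rh_{n-r}$ from the logarithmic derivative of the generating function \eqref{e-gfcomplete}, followed by Cramer's rule on the resulting unitriangular system and the sign $(-1)^{n-1}$ from cycling the last column to the front—is exactly the classical derivation underlying the cited formula. The sign bookkeeping checks out (e.g.\ for $n=2$, $\det\bigl(\begin{smallmatrix}h_1&1\\2h_2&h_1\end{smallmatrix}\bigr)=h_1^2-2h_2=-p_2$), so nothing needs repair.
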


We are concerned about constant term identities for symmetric functions in $\Lambda_{\mathbb{Q}(q,t)}$ in Theorem~\ref{thm-2}. The next result
is a similar result about symmetric functions in $\Lambda_{\mathbb{Q}}$.
\begin{prop}\label{thm-3}
For partitions $\lambda^{(1)},\dots,\lambda^{(m)}$, let $f_{\lambda^{(1)}},\dots,f_{\lambda^{(m)}}\in \Lambda_{\mathbb{Q}}$ be homogeneous symmetric functions of degrees
$|\lambda^{(1)}|,\dots,|\lambda^{(m)}|$ respectively.
Let $X_i$ and $Y_i$ be alphabets independent of $z^{(1)}$ for $i=1,\dots,m$.
Then, for $k_1\geq k_2$ and $\sum_{i=1}^m|\lambda^{(i)}|\leq k_1-k_2$
\begin{multline}\label{e-thm3-1}
\CT_{z^{(1)}}\prod_{i=1}^m f_{\lambda^{(i)}}\big[Y_iZ^{(1)}+X_i\big]
\frac{\prod_{i=1}^{k_1}(z_0/z_i^{(1)})_a(qz_i^{(1)}/z_0)_{c-1}
\prod_{1\leq i<j\leq k_1}(z_i^{(1)}/z_j^{(1)})_c(qz_j^{(1)}/z_i^{(1)})_c}
{z_0^{\sum_{i=1}^m|\lambda^{(i)}|}\prod_{j=1}^{k_2}\prod_{i=1}^{k_1}(z_i^{(1)}/z_j^{(2)})_c}\\
=\prod_{i=1}^m f_{\lambda^{(i)}}\Big[\frac{q^a-1}{1-q^c}Y_i+\frac{X_i}{z_0}\Big]\times M_{k_1}(a,c-1,c)\prod_{j=1}^{k_2}(z_0/z^{(2)}_j)_{a}.
\end{multline}
\end{prop}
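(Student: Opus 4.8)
The plan is to reduce the general homogeneous $f_{\lambda^{(i)}}\in\Lambda_{\mathbb{Q}}$ to power sums and then transport the resulting evaluations onto the $g$-function identity \eqref{e-h-2} already available from Proposition~\ref{prop-1} and Proposition~\ref{thm-2}. Since $\Lambda_{\mathbb{Q}}=\mathbb{Q}[p_1,p_2,\dots]$, each $f_{\lambda^{(i)}}$ is a $\mathbb{Q}$-linear combination of power-sum products $p_{\mu^{(i)}}$ with $|\mu^{(i)}|=|\lambda^{(i)}|$; as the constant term is linear and the right-hand side of \eqref{e-thm3-1} is multilinear in $(f_{\lambda^{(1)}},\dots,f_{\lambda^{(m)}})$ (each slot being a plethystic evaluation at a fixed alphabet, hence linear in $f_{\lambda^{(i)}}$), it suffices to treat the case where every $f_{\lambda^{(i)}}$ is such a product. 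Collecting all factors, the inserted symmetric function then becomes a single product $\prod_{(r,i)}p_r[Y_iZ^{(1)}+X_i]$ over a finite multiset of pairs $(r,i)$ whose $r$-sum equals $\sum_i|\lambda^{(i)}|\le k_1-k_2$.

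First I would separate $Z^{(1)}$ from the auxiliary alphabets. By \eqref{asm1} and \eqref{asm3} one has $p_r[Y_iZ^{(1)}+X_i]=p_r[Y_i]\,p_r[Z^{(1)}]+p_r[X_i]$, so expanding the product over all factors yields a finite sum indexed by a choice, for each pair $(r,i)$, of its ``$Z^{(1)}$-active'' or ``$X$-passive'' branch. A term with active set $S$ contributes the scalar $\prod_{(r,i)\in S}p_r[Y_i]\prod_{(r,i)\notin S}p_r[X_i]$ times $p_\nu[Z^{(1)}]$, where $\nu$ is the multiset of active $r$'s; here $|\nu|=\sum_{(r,i)\in S}r\le k_1-k_2$, and since the parts of $\nu$ are positive, $\ell(\nu)\le|\nu|\le k_1-k_2$. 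The prefactor $z_0^{-\sum_i|\lambda^{(i)}|}$ splits as $z_0^{-|\nu|}$ times the leftover $z_0^{-(\sum_i|\lambda^{(i)}|-|\nu|)}=\prod_{(r,i)\notin S}z_0^{-r}$, which is independent of $z^{(1)}$ and passes through the constant term untouched.

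The core step is the single-alphabet evaluation: for $|\nu|\le k_1-k_2$,
\[
\CT_{z^{(1)}}\ p_\nu[Z^{(1)}]\,z_0^{-|\nu|}\prod_{i=1}^{k_1}(z_0/z_i^{(1)})_a(qz_i^{(1)}/z_0)_{c-1}\,F_{k_1,k_2}(c)=p_\nu\Big[\tfrac{q^a-1}{1-q^c}\Big]\,M_{k_1}(a,c-1,c)\prod_{j=1}^{k_2}(z_0/z_j^{(2)})_a.
\]
This is exactly where the determinant formula \eqref{e-det-ph} is used. It writes $p_n=P_n(h_1,\dots,h_n)$ for a universal isobaric polynomial $P_n$ of weight $n$; applying the ring endomorphism $X\mapsto\frac{1-q^c}{1-q}X$, which by \eqref{division} sends $p_r\mapsto\frac{1-q^{cr}}{1-q^r}p_r$ and by \eqref{modi-complete} sends $h_r\mapsto g_r(X;q,q^c)$, gives $\frac{1-q^{cn}}{1-q^n}p_n=P_n(g_1,\dots,g_n)$, hence $p_n[Z^{(1)}]=\frac{1-q^n}{1-q^{cn}}P_n\big(g_1(Z^{(1)}),\dots,g_n(Z^{(1)})\big)$ is a degree-$n$ polynomial in the $g_r(Z^{(1)})$ with $g$-monomials $g_\mu$ of weight $n$, so $\ell(\mu)\le n\le k_1-k_2$. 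Each such $g_\mu(Z^{(1)})$ is evaluated by \eqref{e-h-2} (the substitution $Z^{(1)}\mapsto\frac{q^a-1}{1-q^c}$ against $z_0^{-|\mu|}$), and the same Newton relation read at the alphabet $\frac{q^a-1}{1-q^c}$ reassembles $p_n[\frac{q^a-1}{1-q^c}]$. Because the evaluation $\phi\mapsto\phi[\frac{q^a-1}{1-q^c}]$ is a ring homomorphism, this promotes from $p_n$ to $p_\nu$, with $p_\nu[\frac{q^a-1}{1-q^c}]=\prod_{r\in\nu}\frac{q^{ar}-1}{1-q^{cr}}$.

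Finally I would expand the right-hand side of \eqref{e-thm3-1} by the same bookkeeping: using $p_r[\frac{q^a-1}{1-q^c}Y_i]=\frac{q^{ar}-1}{1-q^{cr}}p_r[Y_i]$ and $p_r[X_i/z_0]=z_0^{-r}p_r[X_i]$, the product $\prod_i p_{\mu^{(i)}}[\frac{q^a-1}{1-q^c}Y_i+\frac{X_i}{z_0}]$ expands over the very same active/passive choices $S$, producing $p_\nu[\frac{q^a-1}{1-q^c}]\prod_{(r,i)\in S}p_r[Y_i]\cdot z_0^{-(\sum_i|\lambda^{(i)}|-|\nu|)}\prod_{(r,i)\notin S}p_r[X_i]$, which matches the left-hand expansion term by term. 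I expect the main obstacle to be precisely this combinatorial matching --- tracking the $z_0$-powers coming from $X_i/z_0$ against the degree deficit $\sum_i|\lambda^{(i)}|-|\nu|$ and confirming that the index set $S$ is shared by both expansions --- rather than any analytic estimate; a secondary point to verify carefully is that the bound $\ell(\nu)\le|\nu|\le k_1-k_2$ holds for every surviving term, so that \eqref{e-h-2} legitimately applies and no $g$-evaluation with too long an index is ever invoked.
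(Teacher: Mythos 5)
Your proposal is correct and follows essentially the same route as the paper's proof: reduce to power sums by linearity, split $p_r\big[Y_iZ^{(1)}+X_i\big]=p_r[Y_i]\,p_r[Z^{(1)}]+p_r[X_i]$ and expand over active/passive choices, convert the surviving power-sum products to the $h$/$g$ basis via the determinant formula \eqref{e-det-ph} so that the bound $\ell(\nu)\le|\nu|\le k_1-k_2$ licenses Proposition~\ref{prop-1} (equivalently \eqref{e-h-2}), and reassemble term by term. The only cosmetic difference is that you push the plethystic scaling into the $g_\mu(Z^{(1)};q,q^c)$ basis and cite \eqref{e-h-2}, whereas the paper keeps the factor $\frac{1-q^c}{1-q}$ explicit and applies Proposition~\ref{prop-1} to $h_\nu$; these are the same step.
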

\begin{proof}
It suffices to prove that \eqref{e-thm3-1} holds for power sum symmetric functions
by the linearity since the $p_{\lambda}$ form a basis
of $\Lambda_{\mathbb{Q}}$. That is
\begin{multline}\label{e-thm3-2}
L:=\CT_{z^{(1)}}z_0^{-|\lambda|}\prod_{i=1}^{l}p_{\lambda_i}\big[Y_iZ^{(1)}+X_i\big]
\prod_{i=1}^{k_1}(z_0/z_i^{(1)})_a(qz_i^{(1)}/z_0)_{c-1}\times F_{k_1,k_2}(c)\\
=\prod_{i=1}^{l}p_{\lambda_i}\Big[\frac{q^a-1}{1-q^c}Y_i+\frac{X_i}{z_0}\Big]\times M_{k_1}(a,c-1,c)
\prod_{j=1}^{k_2}(z_0/z^{(2)}_j)_{a},
\end{multline}
where $|\lambda|\leq k_1-k_2$, $l=\ell(\lambda)$, the $X_i$ and the $Y_i$ are independent of $z^{(1)}$ for $i=1,\dots,l$, and $F_{k_1,k_2}(c)$ is defined as in \eqref{defi-F}.
By \eqref{asm1} and \eqref{asm3},
\begin{multline*}
L=\CT_{z^{(1)}}z_0^{-|\lambda|}\prod_{i=1}^{l}\Big(p_{\lambda_i}\Big[\frac{1-q}{1-q^c}Y_i\Big]
p_{\lambda_i}\Big[\frac{1-q^c}{1-q}Z^{(1)}\Big]+p_{\lambda_i}[X_i]\Big)\\
\times \prod_{i=1}^{k_1}(z_0/z_i^{(1)})_a(qz_i^{(1)}/z_0)_{c-1} F_{k_1,k_2}(c).
\end{multline*}
Expanding the first product yields
\begin{align*}
L&=\CT_{z^{(1)}}z_0^{-|\lambda|}\sum_{(t_1,\dots,t_l)\in \mathbb{Z}_2^l}
\prod_{i=1}^l\Big(p_{\lambda_i}\Big[\frac{1-q}{1-q^c}Y_i\Big]p_{\lambda_i}\Big[\frac{1-q^c}{1-q}Z^{(1)}\Big]\Big)^{t_i}(p_{\lambda_i}[X_i])^{1-t_i}\\
&\qquad\times \prod_{i=1}^{k_1}(z_0/z_i^{(1)})_a(qz_i^{(1)}/z_0)_{c-1} F_{k_1,k_2}(c)\\
&=\sum_{(t_1,\dots,t_l)\in \mathbb{Z}_2^l}z_0^{-\sum_{i=1}^l\lambda_i(1-t_i)}
\prod_{i=1}^l\Big(p_{\lambda_i}\Big[\frac{1-q}{1-q^c}Y_i\Big]\Big)^{t_i}(p_{\lambda_i}[X_i])^{1-t_i}\\
&\qquad \times
\CT_{z^{(1)}}z_0^{-\sum_{i=1}^l\lambda_it_i}
\prod_{i=1}^l\Big(p_{\lambda_i}\Big[\frac{1-q^c}{1-q}Z^{(1)}\Big]\Big)^{t_i}
\prod_{i=1}^{k_1}(z_0/z_i^{(1)})_a(qz_i^{(1)}/z_0)_{c-1}\times F_{k_1,k_2}(c).
\end{align*}
Write
\begin{equation}\label{e-ph}
\prod_{i=1}^l\big(p_{\lambda_i}\big)^{t_i}
=\sum d_{\nu}h_{\nu}
\end{equation}
using \eqref{e-det-ph}. Since $|\nu|=\sum_{i=1}^l\lambda_it_i\leq |\lambda|\leq k_1-k_2$, we have $\ell(\nu)\leq k_1-k_2$ for each $\nu$ in \eqref{e-ph}.
Thus, we can apply Proposition~\ref{prop-1} and then \eqref{e-ph} to obtain
\begin{align*}
L&=\sum_{(t_1,\dots,t_l)\in \mathbb{Z}_2^l}z_0^{-\sum_{i=1}^l\lambda_i(1-t_i)}
\prod_{i=1}^l\Big(p_{\lambda_i}\Big[\frac{1-q}{1-q^c}Y_i\Big]\Big)^{t_i}(p_{\lambda_i}[X_i])^{1-t_i}\\
&\qquad \times\prod_{i=1}^l\Big(p_{\lambda_i}\Big[\frac{q^a-1}{1-q}\Big]\Big)^{t_i}\cdot
M_{k_1}(a,c-1,c)\prod_{j=1}^{k_2}(z_0/z^{(2)}_j)_{a}\\
&=\sum_{(t_1,\dots,t_l)\in \mathbb{Z}_2^l}\prod_{i=1}^l
\Big(p_{\lambda_i}\Big[\frac{q^a-1}{1-q^c}Y_i\Big]\Big)^{t_i}\Big(p_{\lambda_i}\Big[\frac{X_i}{z_0}\Big]\Big)^{1-t_i}
\cdot M_{k_1}(a,c-1,c)\prod_{j=1}^{k_2}(z_0/z^{(2)}_j)_{a}\\
&=\prod_{i=1}^l\Big(p_{\lambda_i}\Big[\frac{q^a-1}{1-q^c}Y_i\Big]+p_{\lambda_i}\Big[\frac{X_i}{z_0}\Big]\Big)
\cdot M_{k_1}(a,c-1,c)\prod_{j=1}^{k_2}(z_0/z^{(2)}_j)_{a}.
\end{align*}
By \eqref{asm1},
\[
L=\prod_{i=1}^lp_{\lambda_i}\Big[\frac{q^a-1}{1-q^c}Y_i+\frac{X_i}{z_0}\Big]
M_{k_1}(a,c-1,c)\prod_{j=1}^{k_2}(z_0/z^{(2)}_j)_{a}.
\]
Hence, we complete the proof of \eqref{e-thm3-2} and the proposition follows.
\end{proof}

For a positive integer $n$, let $m_1,\dots,m_n$ be nonnegative integers.
For a fixed $s\in \{1,\dots,n\}$, let $\lambda^{(s,j)}$ be partitions for $j=1,\dots,m_s$.
Recall that $z^{(s)}=(z^{(s)}_1,\dots,z^{(s)}_{k_s})$ for $s=1,\dots,n$ and $z=(z^{(1)},z^{(2)},\dots,z^{(n)})$.
By iterating \eqref{e-thm-2} with $z_0=1$, we obtain the next more general result.
\begin{thm}\label{thm-4}
Let $k_1,\dots,k_{n+1}$ and $a_1,\dots,a_n$ be nonnegative integers
such that $k_1\geq k_2\geq \cdots \geq k_{n+1}$ and $a_1=a,a_2=\cdots=a_n=0$.
Let $f_{\lambda^{(s,j)}}\in \Lambda_{\mathbb{Q}(q,t)}$ be homogeneous symmetric functions of degrees
$|\lambda^{(s,j)}|$ such that $f_{\lambda^{(s,j)}}=\sum_{\ell(\mu)\leq \ell(\lambda^{(s,j)})}d_{\mu}g_{\mu}$, where
$d_{\mu}\in \mathbb{Q}(q,t)$.
If $\sum_{j=1}^{m_{s}}\ell(\lambda^{(s,j)})\leq k_{s}-k_{s+1}$ for every $s$
and all the alphabets $X^{(s)}_j$ are independent of $z$, then
\begin{align}
&\CT_{z}
\prod_{s=1}^{n}
\frac{\prod_{i=1}^{k_s}(1/z_i^{(s)})_{a_s}(qz_i^{(s)})_{c-1}
\prod_{1\leq i<j\leq k_s}(z_i^{(s)}/z_j^{(s)})_c(qz_j^{(s)}/z_i^{(s)})_c}
{\prod_{j=1}^{k_{s+1}}\prod_{i=1}^{k_s}(z_i^{(s)}/z_j^{(s+1)})_c}\\
&\quad \times \prod_{s=1}^n\prod_{j=1}^{m_s} f_{\lambda^{(s,j)}}\big(\big[Z^{(s)}+X^{(s)}_j\big];q,q^c\big) \nonumber\\
&\qquad=\prod_{s=1}^n\prod_{j=1}^{m_s} f_{\lambda^{(s,j)}}\Big(\Big[\frac{q^a-1}{1-q^c}+X^{(s)}_j\Big];q,q^c\Big)
\prod_{s=1}^nM_{k_s}(a,c-1,c)
\prod_{j=1}^{k_{n+1}}(1/z^{(n+1)}_j)_{a}.\nonumber
\end{align}
\end{thm}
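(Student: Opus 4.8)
The plan is to establish the identity by iterating the single-block formula \eqref{e-thm-2} of Proposition~\ref{thm-2} exactly $n$ times, integrating out the blocks $z^{(1)},z^{(2)},\dots,z^{(n)}$ in that order. Writing $\CT_z=\CT_{z^{(n)}}\cdots\CT_{z^{(2)}}\CT_{z^{(1)}}$, this iterated extraction is legitimate: under the standing convention $|q^u z_i^{(s)}/z_j^{(s+1)}|<1$ every factor expands into a well-defined formal Laurent series in the totally ordered variables, so the overall constant term equals the iterated one and may be computed one block at a time. The key observation is that putting $z_0=1$ in \eqref{e-thm-2} turns $(z_0/z_i^{(1)})_a(qz_i^{(1)}/z_0)_{c-1}$ into $(1/z_i^{(1)})_a(qz_i^{(1)})_{c-1}$, which is precisely the $s=1$ rational factor of the integrand (recall $a_1=a$); at the same time $z_0^{-\sum_{j}|\lambda^{(1,j)}|}$ becomes $1$ and the arguments $\tfrac{q^a-1}{1-q^c}+X^{(1)}_j/z_0$ on the right become $\tfrac{q^a-1}{1-q^c}+X^{(1)}_j$.

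First I would apply \eqref{e-thm-2} with $z_0=1$, $(k_1,k_2)\mapsto(k_1,k_2)$, the $m_1$ functions $f_{\lambda^{(1,j)}}$ and the alphabets $X^{(1)}_j$; its admissibility hypotheses are exactly the $g_\mu$-expansion $f_{\lambda^{(1,j)}}=\sum_{\ell(\mu)\le\ell(\lambda^{(1,j)})}d_\mu g_\mu$ (inherited verbatim) and $\sum_{j=1}^{m_1}\ell(\lambda^{(1,j)})\le k_1-k_2$, while $k_1\ge k_2$ holds by hypothesis. Since every symmetric function and alphabet carrying an index $s\ge 2$ is independent of $z^{(1)}$, these pass through $\CT_{z^{(1)}}$ as constants, and this step evaluates the $z^{(1)}$-dependence to
\[
\prod_{j=1}^{m_1}f_{\lambda^{(1,j)}}\Big(\Big[\tfrac{q^a-1}{1-q^c}+X^{(1)}_j\Big];q,q^c\Big)\,M_{k_1}(a,c-1,c)\prod_{j=1}^{k_2}(1/z^{(2)}_j)_a .
\]
The decisive mechanism is how the new factor $\prod_{j=1}^{k_2}(1/z^{(2)}_j)_a$ meets the next block: because $a_2=0$ we have $(1/z_i^{(2)})_{a_2}=1$ in the original $s=2$ factor, so multiplying by this new factor rebuilds $\prod_{i=1}^{k_2}(1/z_i^{(2)})_a(qz_i^{(2)})_{c-1}$, returning the integrand to the left-hand shape of \eqref{e-thm-2} with $(k_1,k_2)\mapsto(k_2,k_3)$ and the same value $a$.

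Iterating, at the $s$-th stage I would apply \eqref{e-thm-2} with $z_0=1$, $(k_1,k_2)\mapsto(k_s,k_{s+1})$, parameter $a$, the $m_s$ functions $f_{\lambda^{(s,j)}}$ and alphabets $X^{(s)}_j$; admissibility is the hypothesis $\sum_{j=1}^{m_s}\ell(\lambda^{(s,j)})\le k_s-k_{s+1}$ together with $k_s\ge k_{s+1}$. Each stage contributes one factor $M_{k_s}(a,c-1,c)$, replaces the $s$-th block of symmetric functions by its evaluated form $\prod_j f_{\lambda^{(s,j)}}\big(\big[\tfrac{q^a-1}{1-q^c}+X^{(s)}_j\big];q,q^c\big)$, and hands the factor $\prod_{j=1}^{k_{s+1}}(1/z^{(s+1)}_j)_a$ to stage $s+1$, where (again via $a_{s+1}=0$) it merges into the block-$(s+1)$ integrand. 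After the $n$-th stage the only surviving monomial factor is $\prod_{j=1}^{k_{n+1}}(1/z^{(n+1)}_j)_a$, and assembling all the pieces yields the claimed right-hand side. The same argument may be phrased as an induction on $n$: the base case is \eqref{e-thm-2} at $z_0=1$, and the inductive step applies the $(n-1)$-block instance to $z^{(2)},\dots,z^{(n)}$ after the $a_2=0$ merge.

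Given \eqref{e-thm-2}, the work is essentially bookkeeping rather than a hard estimate, so the main obstacle is to keep the propagation consistent at every stage: one must verify that it is the fixed parameter $a$ --- and not some accumulated quantity --- that appears in each $M_{k_s}(a,c-1,c)$ and in each evaluated symmetric function, and that the admissibility condition of \eqref{e-thm-2} (which ultimately rests on the vanishing results of Section~\ref{s-split} through Lemma~\ref{lem-main-1}) is re-met afresh at every stage. Both points hinge on the hypotheses $a_2=\cdots=a_n=0$ and $\sum_{j=1}^{m_s}\ell(\lambda^{(s,j)})\le k_s-k_{s+1}$; the constancy of $a$ is also what makes $a+\sigma_s+s(1-c)$ collapse to $a$ in the $b_s=c-1$ specialization, so that the $M$-arguments here agree with those in Theorem~\ref{thm-0}.
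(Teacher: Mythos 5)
Your proposal is correct and follows exactly the paper's route: the paper derives Theorem~\ref{thm-4} precisely by iterating \eqref{e-thm-2} with $z_0=1$, using $a_2=\cdots=a_n=0$ so that each produced factor $\prod_{j}(1/z^{(s+1)}_j)_a$ merges into the next block's integrand. Your additional bookkeeping (the iterated constant-term convention, the admissibility check $\sum_j\ell(\lambda^{(s,j)})\le k_s-k_{s+1}$ at each stage, and the constancy of the parameter $a$) fills in the details the paper leaves implicit.
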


Similar to Theorem~\ref{thm-4}, we can also iterate \eqref{e-thm3-1} with $z_0=1$ and obtain the next result.
\begin{thm}\label{thm-5}
Let the $k_i$, $m_i$ and $a_i$ be the same as in Theorem~\ref{thm-4}.
Let $f_{\lambda^{(s,j)}}\in \Lambda_{\mathbb{Q}}$ be  homogeneous symmetric functions of degrees
$|\lambda^{(s,j)}|$ such that $\sum_{j=1}^{m_{s}}|\lambda^{(s,j)}|\leq k_s-k_{s+1}$ for every $s$.
If the $X^{(s)}_j$ and the $Y^{(s)}_j$ are alphabets independent of $z$, then
\begin{multline}
\CT_{z}\prod_{s=1}^n\prod_{j=1}^{m_s} f_{\lambda^{(s,j)}}\big[A^{(s,j)}\big]
\prod_{s=1}^n\frac{\prod_{i=1}^{k_s}(1/z_i^{(s)})_{a_s}(qz_i^{(s)})_{c-1}
\prod_{1\leq i<j\leq k_s}(z_i^{(s)}/z_j^{(s)})_c(qz_j^{(s)}/z_i^{(s)})_c}
{\prod_{j=1}^{k_{s+1}}\prod_{i=1}^{k_s}(z_i^{(s)}/z_j^{(s+1)})_c}\\
=\prod_{s=1}^n\prod_{j=1}^{m_s} f_{\lambda^{(s,j)}}\Big[\frac{q^a-1}{1-q^c}Y^{(s)}_j+X^{(s)}_j\Big]
\prod_{s=1}^nM_{k_s}(a,c-1,c)
\prod_{j=1}^{k_{n+1}}(1/z^{(n+1)}_j)_{a},
\end{multline}
where $A^{(s,j)}=(z^{(s)}_1+\cdots+z^{(s)}_{k_s})Y^{(s)}_j+X^{(s)}_j$.
\end{thm}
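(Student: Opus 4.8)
The plan is to prove the identity by iterating Proposition~\ref{thm-3} (equation~\eqref{e-thm3-1}) specialized to $z_0=1$, peeling off the variable blocks $z^{(1)},z^{(2)},\dots,z^{(n)}$ one block at a time, exactly following the iteration scheme that yields Theorem~\ref{thm-4}. Setting $z_0=1$ in \eqref{e-thm3-1} turns $\prod_{j=1}^{k_2}(z_0/z^{(2)}_j)_a$ into $\prod_{j=1}^{k_2}(1/z^{(2)}_j)_a$ and the argument of $f_{\lambda^{(i)}}$ on the right into $\frac{q^a-1}{1-q^c}Y_i+X_i$, so this specialization is precisely the single-block ($n=1$) case of the theorem, carrying an extra factor $\prod_{j=1}^{k_2}(1/z^{(2)}_j)_a$ that still depends on the next block. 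First I would record that the integrand factorizes so that everything depending on $z^{(s)}$ — namely $\prod_{j=1}^{m_s}f_{\lambda^{(s,j)}}[A^{(s,j)}]$ (which sees $z^{(s)}$ only through $Z^{(s)}$) together with the $s$-th factor of the big product — has exactly the shape handled by \eqref{e-thm3-1}; the other blocks enter only through the coupling denominator $\prod_{i,j}(z_i^{(s)}/z_j^{(s+1)})_c$ and through $z$-independent alphabets, so they are inert during $\CT_{z^{(s)}}$.

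Then I would run the induction on the level $s$. Apply $\CT_{z^{(1)}}$ first: since $a_1=a$ and $\sum_{j=1}^{m_1}|\lambda^{(1,j)}|\leq k_1-k_2$, equation~\eqref{e-thm3-1} with $z_0=1$ evaluates this constant term to
\[
\prod_{j=1}^{m_1}f_{\lambda^{(1,j)}}\Big[\tfrac{q^a-1}{1-q^c}Y^{(1)}_j+X^{(1)}_j\Big]\;M_{k_1}(a,c-1,c)\;\prod_{j=1}^{k_2}(1/z^{(2)}_j)_a.
\]
The heart of the argument is how the leftover factor $\prod_{j=1}^{k_2}(1/z^{(2)}_j)_a$ telescopes into the next level: at $s=2$ the native power factor is $\prod_{i=1}^{k_2}(1/z^{(2)}_i)_{a_2}$ with $a_2=0$, hence trivial, so multiplying by the leftover makes the $z^{(2)}$-part equal to $\prod_{i=1}^{k_2}(1/z^{(2)}_i)_a(qz^{(2)}_i)_{c-1}$ — exactly the input needed to apply \eqref{e-thm3-1} again, now for the block $(k_2,k_3)$ with the \emph{same} global parameter $a$. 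This is precisely why the right-hand side carries $M_{k_s}(a,c-1,c)$ and the shift $\frac{q^a-1}{1-q^c}$ for every $s$ even though $a_s=0$ for $s\geq2$: the parameter $a$ is threaded through the computation by the leftover factor, not by the native exponents. The already-evaluated constants $f_{\lambda^{(1,j)}}[\cdots]$ and $M_{k_1}(a,c-1,c)$ are free of $z^{(2)},\dots,z^{(n)}$ and pull out in front of the remaining constant terms.

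Iterating, each application of $\CT_{z^{(s)}}$ contributes $\prod_{j=1}^{m_s}f_{\lambda^{(s,j)}}[\tfrac{q^a-1}{1-q^c}Y^{(s)}_j+X^{(s)}_j]$ together with $M_{k_s}(a,c-1,c)$, and hands a fresh leftover $\prod_{j=1}^{k_{s+1}}(1/z^{(s+1)}_j)_a$ to level $s+1$. After the final step $\CT_{z^{(n)}}$ the surviving leftover is $\prod_{j=1}^{k_{n+1}}(1/z^{(n+1)}_j)_a$ (an empty product equal to $1$ when $k_{n+1}=0$), and collecting all accumulated factors yields exactly the claimed right-hand side. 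The only points that need care — and the main potential obstacle — are bookkeeping ones: checking at every level that the hypotheses of \eqref{e-thm3-1} hold (the nesting $k_s\geq k_{s+1}$, the degree bound $\sum_{j=1}^{m_s}|\lambda^{(s,j)}|\leq k_s-k_{s+1}$, and independence of $X^{(s)}_j,Y^{(s)}_j$ from the block currently being integrated), and confirming that the telescoping of the $(1/z^{(s)}_j)_a$ factors meshes with $a_s=0$ for $s\geq2$ so that the global $a$ is correctly propagated through all $n$ applications.
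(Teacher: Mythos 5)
Your proposal is correct and follows exactly the paper's (very terse) proof: the paper simply says to iterate \eqref{e-thm3-1} with $z_0=1$ block by block, which is precisely your argument, including the key observation that the leftover factor $\prod_{j=1}^{k_{s+1}}(1/z^{(s+1)}_j)_a$ merges with the trivial $a_{s+1}=0$ factor so that the global parameter $a$ propagates to every level. Your write-up supplies the bookkeeping details the paper leaves implicit, and all hypothesis checks (nesting, degree bounds, independence of the alphabets) are handled correctly.
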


\section{Equivalent constant term identities}\label{sec-equivalent}

In this section, we give several constant term identities equivalent to those in Section~\ref{sec-bc1}.
All these equivalence are built on the next lemma.
\begin{lem}\label{lem-main-2}
For $k_1\geq k_2$ and $a+b+1\geq c$,
\begin{multline}\label{e-bc2}
\CT_{z^{(1)}}\frac{z_0^{(b+1-c)k_1}\prod_{i=1}^{k_1}(z_0/z_i^{(1)})_a(qz_i^{(1)}/z_0)_b
\prod_{1\leq i<j\leq k_1}(z_i^{(1)}/z_j^{(1)})_c(qz_j^{(1)}/z_i^{(1)})_c}
{\prod_{i=1}^{k_1}(z_i^{(1)})^{b+1-c}\prod_{j=1}^{k_2}\prod_{i=1}^{k_1}(q^{b+1-c}z_i^{(1)}/z_j^{(2)})_c}
\\
=(-1)^{k_1(b+1-c)}q^{k_1\binom{b+2-c}{2}}\prod_{j=1}^{k_2}(z_0/z^{(2)}_j)_{a+b+1-c}\cdot
M_{k_1}(a+b+1-c,c-1,c).
\end{multline}
\end{lem}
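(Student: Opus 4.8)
The plan is to deduce \eqref{e-bc2} from Lemma~\ref{lem-main-1} applied with $a$ replaced by $a+b+1-c$, which is a nonnegative integer by the hypothesis $a+b+1\geq c$. Writing $m=b+1-c$ and noting $\binom{b+2-c}{2}=\binom{m+1}{2}$, the right-hand side of \eqref{e-bc2} is exactly $(-1)^{k_1 m}q^{k_1\binom{m+1}{2}}$ times the right-hand side of \eqref{e-bc} with $a\mapsto a+b+1-c$. Since that right-hand side equals $L_{k_1,k_2}(a+b+1-c,c-1,c)$ by Lemma~\ref{lem-main-1}, it suffices to show that the constant term on the left of \eqref{e-bc2} equals $(-1)^{k_1 m}q^{k_1\binom{m+1}{2}}L_{k_1,k_2}(a+b+1-c,c-1,c)$.

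First I would perform the uniform scaling substitution $z_i^{(1)}\mapsto q^{c-1-b}z_i^{(1)}$ for all $i=1,\dots,k_1$, keeping $z_0$ and the $z_j^{(2)}$ fixed. The key point is that $\CT_{z^{(1)}}$ is invariant under such a rescaling: expanding the integrand as $\sum_n c_n(z_0,z^{(2)})\prod_i(z_i^{(1)})^{n_i}$, the substitution multiplies the coefficient of $\prod_i(z_i^{(1)})^{n_i}$ by $q^{(c-1-b)\sum_i n_i}$, which is $1$ on the constant-term monomial $n=0$. One must check that the substitution respects the expansion convention for $(1-dz_i^{(1)}/z_j^{(2)})^{-1}$; it does, since it only rescales $z_i^{(1)}/z_j^{(2)}$ by $q^{c-1-b}$ and the expansion remains in nonnegative powers. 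Under this substitution the denominator $\prod_{j,i}(q^{b+1-c}z_i^{(1)}/z_j^{(2)})_c$ becomes precisely $\prod_{j,i}(z_i^{(1)}/z_j^{(2)})_c$, matching the denominator in \eqref{e-bc}, while the monomial prefactor $z_0^{mk_1}\prod_i(z_i^{(1)})^{-m}$ contributes the overall scalar $q^{k_1 m^2}$ together with a factor $\prod_i y_i^{-m}$, where $y_i=z_i^{(1)}/z_0$.

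The main computation is the pointwise simplification of the numerator. After the substitution each factor is $y_i^{-m}(q^{m}/y_i)_a(q^{1-m}y_i)_b$, and I claim
\[
y^{-m}(q^{m}/y)_a(q^{1-m}y)_b=(-1)^m q^{-\binom{m}{2}}(1/y)_{a+b+1-c}(qy)_{c-1}.
\]
This is the heart of the $q$-factorial bookkeeping: using $(x)_{j+k}=(x)_j(xq^{j})_k$ I split $(q^{m}/y)_a=(1/y)_{a+m}/(1/y)_m$ and $(q^{1-m}y)_b=(q^{1-m}y)_m(qy)_{c-1}$, and then invoke the reflection identity $(q^{1-m}y)_m=(-1)^m y^m q^{-\binom{m}{2}}(1/y)_m$, which follows from $(z)_n=(-z)^n q^{\binom{n}{2}}(q^{1-n}/z)_n$ with $z=q^{1-m}y$ and $n=m$. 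Multiplying these collapses the $(1/y)_m$ and $y^{\pm m}$ factors and yields the displayed scalar. Taking the product over $i$ converts the numerator into $(-1)^{k_1 m}q^{-k_1\binom{m}{2}}\prod_i(z_0/z_i^{(1)})_{a+b+1-c}(qz_i^{(1)}/z_0)_{c-1}$, so the constant term becomes $q^{k_1 m^2}(-1)^{k_1 m}q^{-k_1\binom{m}{2}}L_{k_1,k_2}(a+b+1-c,c-1,c)$. Using $m^2-\binom{m}{2}=\binom{m+1}{2}$ the scalar simplifies to $(-1)^{k_1 m}q^{k_1\binom{m+1}{2}}$, and Lemma~\ref{lem-main-1} with $a\mapsto a+b+1-c$ gives \eqref{e-bc2}.

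The only genuinely delicate step is justifying the uniform-rescaling invariance of $\CT_{z^{(1)}}$ and its compatibility with the prescribed expansion convention; once that is secured, the rest is routine $q$-Pochhammer algebra. A secondary point of care is that $m=b+1-c$ may be negative, so I would phrase the numerator identity through the reflection formula, which holds for all integers $m$ under the generalized $q$-Pochhammer convention, rather than assuming $b\geq c-1$ in the splitting of the $q$-factorials.
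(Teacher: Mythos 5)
Your proof is correct and takes essentially the same route as the paper: both deduce \eqref{e-bc2} from Lemma~\ref{lem-main-1} with $a\mapsto a+b+1-c$ (legitimate since $a+b+1\geq c$), combined with a rescaling by a power of $q$ and the standard $q$-Pochhammer reflection identities, with identical scalar bookkeeping $\binom{b+2-c}{2}=\binom{m+1}{2}$. The only cosmetic difference is direction and where the rescaling acts: you rescale the constant-term variables $z^{(1)}\mapsto q^{c-1-b}z^{(1)}$ and justify invariance of $\CT_{z^{(1)}}$, whereas the paper equivalently rescales the free parameters $(z_0,z^{(2)})\mapsto(q^{c-b-1}z_0,q^{c-b-1}z^{(2)})$, which amounts to the same manipulation since the integrand depends only on ratios of the variables.
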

Taking $b=c-1$, Lemma~\ref{lem-main-2} reduces to Lemma~\ref{lem-main-1}.
Hence, Lemma~\ref{lem-main-2} is seemingly more general.
But the two lemmas are in fact equivalent.
We will show the equivalence in the following proof of Lemma~\ref{lem-main-2}.
\begin{proof}
We prove the lemma by transforming \eqref{e-bc} to \eqref{e-bc2}.

We can write the left-hand side of \eqref{e-bc} as
\begin{equation*}
\CT_{z^{(1)}}\prod_{i=1}^{k_1}(-z_0/z_i^{(1)})^aq^{\binom{a}{2}}(q^{1-a}z_i^{(1)}/z_0)_{a+c-1}
\frac{\prod_{1\leq i<j\leq k_1}(z_i^{(1)}/z_j^{(1)})_c(qz_j^{(1)}/z_i^{(1)})_c}
{\prod_{j=1}^{k_2}\prod_{i=1}^{k_1}(z_i^{(1)}/z_j^{(2)})_c}.
\end{equation*}
Using the above and taking $a\mapsto a+b+1-c$, \eqref{e-bc} becomes
\begin{multline*}
\CT_{z^{(1)}}\prod_{i=1}^{k_1}(-z_0/z_i^{(1)})^{a+b+1-c}q^{\binom{a+b+1-c}{2}}
(q^{c-a-b}z_i^{(1)}/z_0)_{a+b}
\frac{\prod_{1\leq i<j\leq k_1}(z_i^{(1)}/z_j^{(1)})_c(qz_j^{(1)}/z_i^{(1)})_c}
{\prod_{j=1}^{k_2}\prod_{i=1}^{k_1}(z_i^{(1)}/z_j^{(2)})_c}\\
=\prod_{j=1}^{k_2}(z_0/z^{(2)}_j)_{a+b+1-c}\cdot M_{k_1}(a+b+1-c,c-1,c).
\end{multline*}
Note that the substitution $a\mapsto a+b+1-c$ is valid by $a+b+1\geq c$.
Take $(z_0,z^{(2)}_j)\mapsto (z_0q^{c-b-1},z^{(2)}_jq^{c-b-1})$ in the above. This substitution does not change the constant term. Thus
\begin{multline}\label{e-bc2-1}
\CT_{z^{(1)}}\prod_{i=1}^{k_1}(-q^{c-b-1}z_0/z_i^{(1)})^{a+b+1-c}q^{\binom{a+b+1-c}{2}}
(q^{1-a}z_i^{(1)}/z_0)_{a+b}
\frac{\prod_{1\leq i<j\leq k_1}(z_i^{(1)}/z_j^{(1)})_c(qz_j^{(1)}/z_i^{(1)})_c}
{\prod_{j=1}^{k_2}\prod_{i=1}^{k_1}(q^{b+1-c}z_i^{(1)}/z_j^{(2)})_c}\\
=\prod_{j=1}^{k_2}(z_0/z^{(2)}_j)_{a+b+1-c}\cdot M_{k_1}(a+b+1-c,c-1,c).
\end{multline}
We can rewrite \eqref{e-bc2-1} as
\begin{align}\label{e-bc2-2}
&\CT_{z^{(1)}}\prod_{i=1}^{k_1}(z_0/z_i^{(1)})^{b+1-c}
q^{\binom{a}{2}}(-z_0/z_i^{(1)})^a(q^{1-a}z_i^{(1)}/z_0)_{a+b}
\frac{\prod_{1\leq i<j\leq k_1}(z_i^{(1)}/z_j^{(1)})_c(qz_j^{(1)}/z_i^{(1)})_c}
{\prod_{j=1}^{k_2}\prod_{i=1}^{k_1}(q^{b+1-c}z_i^{(1)}/z_j^{(2)})_c}\\
&\quad=(-1)^{k_1(b+1-c)}q^{k_1\big(\binom{a}{2}+(b+1-c)(a+b+1-c)-\binom{a+b+1-c}{2}\big)}
\nonumber\\
&\qquad\times \prod_{j=1}^{k_2}(z_0/z^{(2)}_j)_{a+b+1-c}\cdot M_{k_1}(a+b+1-c,c-1,c).\nonumber
\end{align}
Substituting
\[
q^{\binom{a}{2}}(-z_0/z_i^{(1)})^a(q^{1-a}z_i^{(1)}/z_0)_{a+b}
=(z_0/z_i^{(1)})_a(qz_i^{(1)}/z_0)_b
\]
and
\[
\binom{a}{2}+(b+1-c)(a+b+1-c)-\binom{a+b+1-c}{2}=\binom{b+2-c}{2}
\]
into \eqref{e-bc2-2} yields \eqref{e-bc2}.
\end{proof}
By Lemma~\ref{lem-main-2} we obtain the next result.
The proof is similar to the proof of Proposition~\ref{prop-1} using Lemma~\ref{lem-main-1}. Hence, we omit the proof.
\begin{prop}\label{prop-1-equiv}
For $a+b+1\geq c$, $k_1\geq k_2$ and a partition $\lambda$ such that $\ell(\lambda)\leq k_1-k_2$,
\begin{multline}\label{e-h-equvi}
\CT_{z^{(1)}}\frac{z_0^{(b+1-c)k_1-|\lambda|}h_{\lambda}\big[\frac{1-q^c}{1-q}Z^{(1)}\big]\prod_{i=1}^{k_1}(z_0/z_i^{(1)})_a(qz_i^{(1)}/z_0)_{b}
\prod_{1\leq i<j\leq k_1}(z_i^{(1)}/z_j^{(1)})_c(qz_j^{(1)}/z_i^{(1)})_c}
{\prod_{i=1}^{k_1}(z_i^{(1)})^{b+1-c}\prod_{j=1}^{k_2}\prod_{i=1}^{k_1}(q^{b+1-c}z_i^{(1)}/z_j^{(2)})_c}\\
\quad=(-1)^{k_1(b+1-c)}q^{k_1\binom{b+2-c}{2}}
h_{\lambda}\Big[\frac{q^{a}-q^{c-b-1}}{1-q}\Big]
\prod_{j=1}^{k_2}(z_0/z^{(2)}_j)_{a+b+1-c}\cdot M_{k_1}(a+b+1-c,c-1,c),
\end{multline}
where $Z^{(1)}=z_1^{(1)}+\cdots+z_{k_1}^{(1)}$.
\end{prop}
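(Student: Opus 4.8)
The plan is to follow the proof of Proposition~\ref{prop-1} verbatim in structure, using Lemma~\ref{lem-main-2} in place of Lemma~\ref{lem-main-1}. Write $L$ for the constant term on the left-hand side of \eqref{e-h-equvi}, and factor $h_{\lambda}\big[\frac{1-q^c}{1-q}Z^{(1)}\big]=\prod_{j=1}^{\ell(\lambda)}h_{\lambda_j}\big[\frac{1-q^c}{1-q}Z^{(1)}\big]$. The first step is to absorb this symmetric function into the Vandermonde-type denominator by introducing $\ell(\lambda)$ fresh variables $z^{(2)}_{k_2+1},\dots,z^{(2)}_{k_2+\ell(\lambda)}$. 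Because $\frac{1-q^c}{1-q}Z^{(1)}$ is the alphabet $\{q^mz_i^{(1)}:1\le i\le k_1,\ 0\le m\le c-1\}$, the generating function \eqref{e-gfcomplete}, evaluated at $y=q^{b+1-c}/z^{(2)}_{k_2+j}$, gives
\[
\sum_{r\ge0}h_r\Big[\frac{1-q^c}{1-q}Z^{(1)}\Big]\Big(\frac{q^{b+1-c}}{z^{(2)}_{k_2+j}}\Big)^{r}
=\prod_{i=1}^{k_1}\frac{1}{(q^{b+1-c}z_i^{(1)}/z^{(2)}_{k_2+j})_c}.
\]
I choose the shift $q^{b+1-c}$ precisely so that the new denominator factors coincide with the shifted factors $(q^{b+1-c}z_i^{(1)}/z^{(2)}_j)_c$ already present in \eqref{e-h-equvi}. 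Extracting the coefficient of $(z^{(2)}_{k_2+j})^{-\lambda_j}$ then realizes each $h_{\lambda_j}$ as a one-variable constant term, at the cost of a prefactor $q^{-(b+1-c)\lambda_j}$; taking the product over $j$ produces the overall prefactor $q^{-(b+1-c)|\lambda|}$ together with the extraction monomials $\prod_{j=1}^{\ell(\lambda)}(z^{(2)}_{k_2+j})^{\lambda_j}$.

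Once this is done, the remaining constant term over $z^{(1)}$ is, up to the scalar $z_0^{-|\lambda|}$ held outside, exactly the left-hand side of \eqref{e-bc2} with $k_2$ replaced by $k_2+\ell(\lambda)$; this is legitimate since $\ell(\lambda)\le k_1-k_2$ forces $k_2+\ell(\lambda)\le k_1$. Applying Lemma~\ref{lem-main-2} in this form evaluates the $z^{(1)}$-constant term and leaves the product $\prod_{j=1}^{k_2+\ell(\lambda)}(z_0/z^{(2)}_j)_{a+b+1-c}$, whose factors with $j\le k_2$ are the ones on the right of \eqref{e-h-equvi}. For each $j>k_2$ I am then left with the one-variable evaluation $\CT_{z^{(2)}_{k_2+j}}(z^{(2)}_{k_2+j})^{\lambda_j}(z_0/z^{(2)}_{k_2+j})_{a+b+1-c}$; by the $q$-binomial theorem only the degree-$\lambda_j$ term contributes, giving $(-1)^{\lambda_j}q^{\binom{\lambda_j}{2}}\qbinom{a+b+1-c}{\lambda_j}z_0^{\lambda_j}$. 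The accumulated $z_0^{\lambda_j}$ cancel the scalar $z_0^{-|\lambda|}$, and by \eqref{e-h-1} (with $a\mapsto a+b+1-c$) the product over $j$ equals $h_{\lambda}\big[\frac{q^{a+b+1-c}-1}{1-q}\big]$.

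The last step reconciles the evaluation point. Combining the prefactor $q^{-(b+1-c)|\lambda|}$ from the first step with $h_{\lambda}\big[\frac{q^{a+b+1-c}-1}{1-q}\big]$, I invoke the homogeneity \eqref{e-homo-sym}: since $h_{\lambda}$ is homogeneous of degree $|\lambda|$ and $\frac{q^a-q^{c-b-1}}{1-q}=q^{c-b-1}\cdot\frac{q^{a+b+1-c}-1}{1-q}$, we have $h_{\lambda}\big[\frac{q^a-q^{c-b-1}}{1-q}\big]=q^{(c-b-1)|\lambda|}h_{\lambda}\big[\frac{q^{a+b+1-c}-1}{1-q}\big]$, whence $q^{-(b+1-c)|\lambda|}h_{\lambda}\big[\frac{q^{a+b+1-c}-1}{1-q}\big]=h_{\lambda}\big[\frac{q^a-q^{c-b-1}}{1-q}\big]$, which is precisely the evaluation in \eqref{e-h-equvi}. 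Together with the factors $(-1)^{k_1(b+1-c)}q^{k_1\binom{b+2-c}{2}}$ and $M_{k_1}(a+b+1-c,c-1,c)$ furnished by Lemma~\ref{lem-main-2}, this yields the claim. I expect the only delicate point to be this $q$-power bookkeeping forced by the shift $q^{b+1-c}$: the prefactor $q^{-(b+1-c)|\lambda|}$ coming from the shifted generating function must cancel exactly against the homogeneity rescaling, and it is this cancellation that turns the naive argument $\frac{q^{a+b+1-c}-1}{1-q}$ into the stated $\frac{q^a-q^{c-b-1}}{1-q}$. Alternatively, one may avoid the direct computation altogether by transporting the identity \eqref{e-h} of Proposition~\ref{prop-1} through the substitution chain $a\mapsto a+b+1-c$ followed by $(z_0,z^{(2)}_j)\mapsto(z_0q^{c-b-1},z^{(2)}_jq^{c-b-1})$ used in the proof of Lemma~\ref{lem-main-2}, while tracking the extra factor $z_0^{-|\lambda|}$ and the untouched symmetric function $h_{\lambda}\big[\frac{1-q^c}{1-q}Z^{(1)}\big]$.
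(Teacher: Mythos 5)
Your proposal is correct and takes essentially the same approach the paper intends: the paper omits the proof of Proposition~\ref{prop-1-equiv}, stating only that it is similar to the proof of Proposition~\ref{prop-1} with Lemma~\ref{lem-main-2} in place of Lemma~\ref{lem-main-1}, and your argument carries out precisely that adaptation, with the key new detail (choosing $y=q^{b+1-c}/z^{(2)}_{k_2+j}$ in the generating function so the auxiliary denominators match the shifted factors, then cancelling the resulting prefactor $q^{-(b+1-c)|\lambda|}$ against the homogeneity rescaling $h_{\lambda}\big[\frac{q^a-q^{c-b-1}}{1-q}\big]=q^{(c-b-1)|\lambda|}h_{\lambda}\big[\frac{q^{a+b+1-c}-1}{1-q}\big]$) handled correctly. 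Your closing alternative, transporting \eqref{e-h} through the substitutions $a\mapsto a+b+1-c$ and $(z_0,z^{(2)}_j)\mapsto(z_0q^{c-b-1},z^{(2)}_jq^{c-b-1})$, is likewise the equivalence the paper itself remarks on immediately after the statement.
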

The identity \eqref{e-h} in Proposition~\ref{prop-1} is the $b=c-1$ case of \eqref{e-h-equvi}.
However, it is not hard to transform \eqref{e-h} into \eqref{e-h-equvi} along the same line
as the proof of Lemma~\ref{lem-main-2}. Hence, the two identities are equivalent.

By \eqref{modi-complete}, we can rewrite \eqref{e-h-equvi} as
\begin{align}\label{e-lastsec-1}
&\CT_{z^{(1)}}\frac{g_{\lambda}(z^{(1)};q,q^c)\prod_{i=1}^{k_1}(z_0/z_i^{(1)})_a(qz_i^{(1)}/z_0)_{b}
\prod_{1\leq i<j\leq k_1}(z_i^{(1)}/z_j^{(1)})_c(qz_j^{(1)}/z_i^{(1)})_c}
{z_0^{|\lambda|-(b+1-c)k_1}\prod_{i=1}^{k_1}(z_i^{(1)})^{b+1-c}\prod_{j=1}^{k_2}\prod_{i=1}^{k_1}(q^{b+1-c}z_i^{(1)}/z_j^{(2)})_c}\\
&\quad=(-1)^{k_1(b+1-c)}q^{k_1\binom{b+2-c}{2}}
g_{\lambda}\Big(\Big[\frac{q^{a}-q^{c-b-1}}{1-q^c}\Big];q,q^c\Big)\nonumber \\
&\qquad\times \prod_{j=1}^{k_2}(z_0/z^{(2)}_j)_{a+b+1-c}\cdot M_{k_1}(a+b+1-c,c-1,c) \nonumber
\end{align}
for $k_1\geq k_2$, $\ell(\lambda)\leq k_1-k_2$ and $a+b+1\geq c$.
Using \eqref{e-lastsec-1} with $z_0=1$, we obtain the next result equivalent to Proposition~\ref{thm-2}.
The proof is similar to the proof of Proposition~\ref{thm-2}.
\begin{prop}
For $F=\mathbb{Q}(q,t)$ and partitions $\lambda^{(1)},\dots,\lambda^{(m)}$,
let $f_{\lambda^{(1)}},\dots,f_{\lambda^{(m)}}\in \Lambda_{F}$ be homogeneous symmetric functions
of degrees $|\lambda^{(1)}|,\dots,|\lambda^{(m)}|$ respectively
such that $f_{\lambda^{(i)}}=\sum_{\ell(\mu)\leq \ell(\lambda^{(i)})}d_{\mu}g_{\mu}$ for $i=1,\dots,m$.
Here $d_{\mu}\in F$.
Then, for $a+b+1\geq c$, $k_1\geq k_2$, $\sum_{i=1}^m\ell(\lambda^{(i)})\leq k_1-k_2$ and any alphabets $X_i$ independent of $z^{(1)}$
\begin{align}\label{e-lastsec-2}
&\CT_{z^{(1)}}\frac{\prod_{i=1}^mf_{\lambda^{(i)}}
\big(\big[Z^{(1)}+X_i\big];q,q^c)
\prod_{i=1}^{k_1}(1/z_i^{(1)})_a(qz_i^{(1)})_{b}
\prod_{1\leq i<j\leq k_1}(z_i^{(1)}/z_j^{(1)})_c(qz_j^{(1)}/z_i^{(1)})_c}
{\prod_{i=1}^{k_1}(z_i^{(1)})^{b+1-c}\prod_{j=1}^{k_2}\prod_{i=1}^{k_1}(q^{b+1-c}z_i^{(1)}/z_j^{(2)})_c}\\
&\quad=(-1)^{k_1(b+1-c)}q^{k_1\binom{b+2-c}{2}}
\prod_{i=1}^mf_{\lambda^{(i)}}\Big(\Big[\frac{q^{a}-q^{c-b-1}}{1-q^c}+X_i\Big];q,q^c\Big)\nonumber \\
&\qquad\times\prod_{j=1}^{k_2}(1/z^{(2)}_j)_{a+b+1-c}\cdot M_{k_1}(a+b+1-c,c-1,c).\nonumber
\end{align}
\end{prop}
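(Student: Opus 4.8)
The plan is to mirror the proof of Proposition~\ref{thm-2}, with the only change being that the single-symmetric-function base identity \eqref{e-h-2} is replaced by its counterpart \eqref{e-lastsec-1} specialized to $z_0=1$. First I would invoke linearity. Since the $g_\mu = g_\mu(q,q^c)$ form a basis of $\Lambda_F$ and each factor expands as $f_{\lambda^{(i)}}=\sum_{\ell(\mu)\leq \ell(\lambda^{(i)})}d_\mu g_\mu$, the product $\prod_{i=1}^m f_{\lambda^{(i)}}$ can be rewritten as $\sum_\nu d_\nu g_\nu$ in which every partition $\nu$ that occurs satisfies $\ell(\nu)\leq \sum_{i=1}^m \ell(\lambda^{(i)})\leq k_1-k_2$. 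Because $\CT_{z^{(1)}}$ is $F$-linear, it therefore suffices to prove the identity when $\prod_{i=1}^m f_{\lambda^{(i)}}([Z^{(1)}+X_i];q,q^c)$ is replaced by a single product $\prod_{i=1}^{\ell(\nu)}g_{\nu_i}([Z^{(1)}+X_i];q,q^c)$ with $\ell(\nu)\leq k_1-k_2$.

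For such a product I would apply the addition formula \eqref{e-g} factorwise, writing $g_{\nu_i}[Z^{(1)}+X_i]=\sum_{t_i=0}^{\nu_i}g_{t_i}[X_i]\,g_{\nu_i-t_i}[Z^{(1)}]$. As the $X_i$ are independent of $z^{(1)}$, every factor $g_{t_i}[X_i]$ pulls outside the constant term, leaving an inner constant term whose only $z^{(1)}$-dependence in the symmetric-function part is $\prod_i g_{\nu_i-t_i}[Z^{(1)}]=g_\mu[Z^{(1)}]$, where $\mu$ is the partition assembled from the parts $\nu_i-t_i$. Since $\ell(\mu)\leq \ell(\nu)\leq k_1-k_2$, the base identity \eqref{e-lastsec-1} at $z_0=1$ applies and evaluates this inner constant term: it supplies the prefactor $(-1)^{k_1(b+1-c)}q^{k_1\binom{b+2-c}{2}}$, the value $g_\mu([\tfrac{q^a-q^{c-b-1}}{1-q^c}];q,q^c)=\prod_i g_{\nu_i-t_i}[\tfrac{q^a-q^{c-b-1}}{1-q^c}]$, and the factor $\prod_{j=1}^{k_2}(1/z_j^{(2)})_{a+b+1-c}\cdot M_{k_1}(a+b+1-c,c-1,c)$.

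It then remains to reassemble $\prod_{i}\sum_{t_i=0}^{\nu_i} g_{t_i}[X_i]\,g_{\nu_i-t_i}[\tfrac{q^a-q^{c-b-1}}{1-q^c}]$. Applying \eqref{e-g} once more, now in the reverse direction, collapses each inner sum to $g_{\nu_i}[\tfrac{q^a-q^{c-b-1}}{1-q^c}+X_i]$, which is precisely the claimed right-hand side. I note that the homogeneity maneuver required in Proposition~\ref{thm-2} (rewriting $z_0^{-t_i}g_{t_i}[X_i]$ as $g_{t_i}[X_i/z_0]$ by \eqref{e-homo-sym}) is unnecessary here, since $z_0=1$ makes $X_i/z_0=X_i$ automatically.

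Given \eqref{e-lastsec-1} and \eqref{e-g}, every step is routine, so I do not expect a genuine obstacle; the two points deserving a moment's care are both bookkeeping. The first is that the $\nu$-independent data produced by the base case — the sign, the power $q^{k_1\binom{b+2-c}{2}}$, the Morris evaluation $M_{k_1}(a+b+1-c,c-1,c)$, and the product $\prod_{j=1}^{k_2}(1/z_j^{(2)})_{a+b+1-c}$ — depends only on $k_1,k_2,a,b,c$ and not on $\mu$ or on the summation indices $t_i$, hence factors cleanly out of the whole linear combination. The second is that the length bound $\ell(\mu)\leq \ell(\nu)\leq k_1-k_2$ is preserved under the splitting (vanishing parts $\nu_i-t_i=0$ only shorten $\mu$), so that \eqref{e-lastsec-1} remains applicable at every term.
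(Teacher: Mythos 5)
Your proposal is correct and is exactly the argument the paper intends: the paper omits the details, stating only that the proof is ``similar to the proof of Proposition~\ref{thm-2}'' using \eqref{e-lastsec-1} with $z_0=1$, and your write-up supplies precisely that adaptation (linear reduction to $g_\nu$-products, splitting via \eqref{e-g}, evaluating by the base identity, and reassembling via \eqref{e-g}). Your observation that the $z_0$-homogeneity step of Proposition~\ref{thm-2} becomes vacuous at $z_0=1$ is also consistent with the paper's treatment.
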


For $n$ a positive integer, let $k_1,\dots,k_{n+1}$, $m_1,\dots,m_n$, $a_1,\dots,a_n$ and $b_1,\dots,b_n$ be nonnegative integers such that $k_1\geq k_2\geq \cdots \geq k_{n+1}$ and $a_1=a,a_2=\cdots=a_n=0$.
For $1\leq s\leq n$, $1\leq j\leq m_s$ and $\lambda^{(s,j)}$ partitions, let
$f_{\lambda^{(s,j)}}\in \Lambda_{F}=\Lambda_{\mathbb{Q}(q,t)}$ be homogeneous symmetric functions of degrees $|\lambda^{(s,j)}|$.
For $s=1,\dots,n$, denote $\sigma_s=b_1+\cdots+b_s$, $\sigma_0=0$, and $Z^{(s)}=z^{(s)}_1+\cdots+z^{(s)}_{k_s}$.
By iterating \eqref{e-lastsec-2} we obtain Theorem~\ref{thm-main-1} below, which is equivalent to Theorem~\ref{thm-4}.
\begin{thm}\label{thm-main-1}
Suppose $f_{\lambda^{(s,j)}}=\sum_{\ell(\mu)\leq \ell(\lambda^{(s,j)})}d_{\mu}g_{\mu}$.
If $a+\sigma_s+s\geq sc$, $\sum_{j=1}^{m_{s}}\ell(\lambda^{(s,j)})\leq k_{s}-k_{s+1}$ for every $s$,
and all the alphabets $X^{(s)}_j$ are independent of $z$, then
\begin{align}\label{e-main-1}
&\CT_{z}
\prod_{s=1}^{n}\frac{\prod_{i=1}^{k_s}(1/z_i^{(s)})_{a_s}(qz_i^{(s)})_{b_s}
\prod_{1\leq i<j\leq k_s}(z_i^{(s)}/z_j^{(s)})_c(qz_j^{(s)}/z_i^{(s)})_c}
{\prod_{i=1}^{k_s}(z_i^{(s)})^{b_s+1-c}\prod_{j=1}^{k_{s+1}}\prod_{i=1}^{k_s}(q^{b_s+1-c}z_i^{(s)}/z_j^{(s+1)})_c}\\
&\qquad \times\prod_{s=1}^n\prod_{j=1}^{m_s} f_{\lambda^{(s,j)}}\big(\big[Z^{(s)}+X^{(s)}_j\big];q,q^c\big)\nonumber\\
&\quad=(-1)^{\sum_{s=1}^nk_s(b_s+1-c)}q^{\sum_{s=1}^nk_s\binom{b_s+2-c}{2}}
\prod_{s=1}^nM_{k_s}\big(a+\sigma_s+s(1-c),c-1,c\big)\nonumber\\
&\qquad\times  \prod_{j=1}^{k_{n+1}}(1/z^{(n+1)}_j)_{a+\sigma_n+n(1-c)}
\prod_{s=1}^n\prod_{j=1}^{m_s} f_{\lambda^{(s,j)}}\Big(\Big[\frac{q^{a+\sigma_{s-1}+(s-1)(1-c)}-q^{c-b_s-1}}
{1-q^c}+X^{(s)}_j\Big];q,q^c\Big).
\nonumber
\end{align}
\end{thm}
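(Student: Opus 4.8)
The plan is to prove \eqref{e-main-1} by induction on $n$, using the single-level identity \eqref{e-lastsec-2} (with $z_0=1$) as the engine of the iteration. The base case $n=1$ is precisely \eqref{e-lastsec-2}: writing $a_1=a$ and $\sigma_1=b_1$, the right-hand side of \eqref{e-lastsec-2} produces the factor $M_{k_1}(a+b_1+1-c,c-1,c)=M_{k_1}(a+\sigma_1+1(1-c),c-1,c)$, the factor $\prod_j(1/z_j^{(2)})_{a+\sigma_1+1(1-c)}$, and the transformed symmetric functions evaluated at $[(q^a-q^{c-b_1-1})/(1-q^c)+X_j^{(1)}]$, which is exactly \eqref{e-main-1} for $n=1$ with $z^{(2)}=z^{(n+1)}$.

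For the inductive step I would first split the constant term as $\CT_z=\CT_{z^{(2)},\dots,z^{(n)}}\,\CT_{z^{(1)}}$, which is legitimate since constant-term operators in disjoint variable sets commute. Every factor carrying an index $s\geq 2$ — the weights coupling $z^{(s)}$ to $z^{(s+1)}$ and the symmetric functions $f_{\lambda^{(s,j)}}\big([Z^{(s)}+X_j^{(s)}]\big)$ — is independent of $z^{(1)}$ and pulls out of $\CT_{z^{(1)}}$. The remaining inner constant term is exactly the left-hand side of \eqref{e-lastsec-2} with $(a,b)\mapsto(a,b_1)$, alphabets $X_i=X_j^{(1)}$, and symmetric functions $f_{\lambda^{(1,j)}}$; the hypotheses $a+\sigma_1+1\geq c$ (the $s=1$ instance of $a+\sigma_s+s\geq sc$) and $\sum_j\ell(\lambda^{(1,j)})\leq k_1-k_2$ are precisely the conditions required to apply it. Evaluating $\CT_{z^{(1)}}$ via \eqref{e-lastsec-2} emits the sign $(-1)^{k_1(b_1+1-c)}$, the power $q^{k_1\binom{b_1+2-c}{2}}$, the factor $M_{k_1}(a+b_1+1-c,c-1,c)$, the transformed level-$1$ symmetric functions, and a new factor $\prod_j(1/z_j^{(2)})_{a+b_1+1-c}$.

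The crucial observation is that this emitted factor $\prod_j(1/z_j^{(2)})_{a+b_1+1-c}$, multiplied against the level-$2$ weight whose own $a$-parameter is $a_2=0$ (so that $(1/z_i^{(2)})_{a_2}=1$), reconstitutes the left-hand side of an $(n-1)$-level instance of \eqref{e-main-1} in the variables $z^{(2)},\dots,z^{(n+1)}$, with initial parameter $a$ replaced by $a'=a+\sigma_1+1(1-c)$ and all remaining data shifted down by one index. I would then verify the bookkeeping by setting $a_s'=a+\sigma_{s-1}+(s-1)(1-c)$: the effective $a$-parameter fed into \eqref{e-lastsec-2} at each level is $a_s'$, and the subscripts telescope via $a_s'+b_s+1-c=a_{s+1}'=a+\sigma_s+s(1-c)$, so they match both the final factor $\prod_j(1/z_j^{(n+1)})_{a+\sigma_n+n(1-c)}$ and each $M_{k_s}(a+\sigma_s+s(1-c),c-1,c)$; similarly the argument exponent $a_s'$ of the transformed symmetric functions equals $a+\sigma_{s-1}+(s-1)(1-c)$, as in \eqref{e-main-1}. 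Writing $\tilde\sigma_{s-1}=\sigma_s-\sigma_1$ for the shifted partial sums, the condition required by the sub-problem at its level $s-1$ simplifies to $a+\sigma_s+s\geq sc$, matching the original hypothesis for $s=2,\dots,n$; moreover $a+\sigma_s+s\geq sc$ is exactly the statement $a_{s+1}'\geq 0$, so every $q$-Pochhammer subscript that arises is a genuine nonnegative integer and each application of \eqref{e-lastsec-2} is valid. Feeding the sub-problem to the inductive hypothesis and collecting the accumulated signs, $q$-powers, $M$-factors, and symmetric functions then yields \eqref{e-main-1}.

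I expect the main obstacle to be organizational rather than conceptual: keeping the three intertwined bookkeeping strands — the accumulated exponent $a_s'$, the propagation of the inequality $a+\sigma_s+s\geq sc$ into the reindexed sub-problem, and the matching of the symmetric-function arguments — simultaneously consistent under the shift $s\mapsto s-1$. There is no new analytic input beyond \eqref{e-lastsec-2}; the entire content lies in confirming that the output $a$-factor of one level is seamlessly absorbed (through $a_s=0$) as the input $a$-factor of the next.
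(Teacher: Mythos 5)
Your proposal is correct and is essentially the paper's own argument: the paper proves Theorem~\ref{thm-main-1} precisely by iterating \eqref{e-lastsec-2} with $z_0=1$, taking the constant term level by level starting with $z^{(1)}$ so that the emitted factor $\prod_{j}(1/z_j^{(s+1)})_{a+\sigma_s+s(1-c)}$ is absorbed (via $a_{s+1}=0$) as the effective $a$-parameter of the next level. Your explicit induction, the telescoping $a_s'+b_s+1-c=a_{s+1}'$, and the identification of $a+\sigma_s+s\geq sc$ as exactly the hypothesis needed at each application of \eqref{e-lastsec-2} are just the bookkeeping the paper leaves implicit.
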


Along the same line to obtain Theorem~\ref{thm-5}, we can obtain its equivalent result in the next theorem.
\begin{thm}\label{thm-main-2}
For $s=1,\dots,n$, if $f_{\lambda^{(s,j)}}\in \Lambda_{\mathbb{Q}}$ such that
$\sum_{j=1}^{m_{s}}|\lambda^{(s,j)}|\leq k_s-k_{s+1}$,
all the alphabets $X^{(s)}_j$ and $Y^{(s)}_j$ are independent of $z$, and $a+\sigma_s+s\geq sc$, then
\begin{align}
&\CT_{z}\prod_{s=1}^n\prod_{j=1}^{m_s} f_{\lambda^{(s,j)}}\big[Z^{(s)}Y^{(s)}_{j}+X^{(s)}_j\big]
\prod_{s=1}^{n}\frac{\prod_{i=1}^{k_s}(1/z_i^{(s)})_{a_s}(qz_i^{(s)})_{b_s}
\prod_{1\leq i<j\leq k_s}(z_i^{(s)}/z_j^{(s)})_c(qz_j^{(s)}/z_i^{(s)})_c}
{\prod_{i=1}^{k_s}(z_i^{(s)})^{b_s+1-c}\prod_{j=1}^{k_{s+1}}\prod_{i=1}^{k_s}(q^{b_s+1-c}z_i^{(s)}/z_j^{(s+1)})_c}\\
&\quad=(-1)^{\sum_{s=1}^nk_s(b_s+1-c)}q^{\sum_{s=1}^nk_s\binom{b_s+2-c}{2}}
\prod_{s=1}^n\prod_{j=1}^{m_s} f_{\lambda^{(s,j)}}\Big[\frac{q^{a+\sigma_{s-1}+(s-1)(1-c)}-q^{c-b_s-1}}
{1-q^c}Y^{(s)}_{j}+X^{(s)}_j\Big] \nonumber\\
&\qquad\times \prod_{s=1}^nM_{k_s}\big(a+\sigma_s+s(1-c),c-1,c\big)
\prod_{j=1}^{k_{n+1}}(1/z^{(n+1)}_j)_{a+\sigma_n+n(1-c)}.\nonumber
\end{align}
\end{thm}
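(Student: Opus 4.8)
The plan is to prove Theorem~\ref{thm-main-2} by first isolating a single-level constant-term identity and then iterating it, exactly mirroring the passage from Proposition~\ref{thm-3} to Theorem~\ref{thm-5}. The single-level ingredient I need is the common generalization of Proposition~\ref{thm-3} (the case $b=c-1$) and of the $\Lambda_{\mathbb{Q}(q,t)}$ identity \eqref{e-lastsec-2} (the case $Y_i\equiv 1$): for $f_{\lambda^{(i)}}\in\Lambda_{\mathbb{Q}}$ homogeneous of degree $|\lambda^{(i)}|$, assuming $a+b+1\geq c$, $k_1\geq k_2$, $\sum_{i=1}^m|\lambda^{(i)}|\leq k_1-k_2$, and $X_i,Y_i$ independent of $z^{(1)}$, the constant term of $\prod_{i=1}^m f_{\lambda^{(i)}}[Y_iZ^{(1)}+X_i]$ against the weight on the left of \eqref{e-bc2}, with the power of $z_0$ there lowered to $z_0^{(b+1-c)k_1-\sum_{i=1}^m|\lambda^{(i)}|}$ to preserve homogeneity, evaluates to $(-1)^{k_1(b+1-c)}q^{k_1\binom{b+2-c}{2}}$ times $M_{k_1}(a+b+1-c,c-1,c)\prod_{j=1}^{k_2}(z_0/z_j^{(2)})_{a+b+1-c}$ times $\prod_{i=1}^m f_{\lambda^{(i)}}[\tfrac{q^{a}-q^{c-b-1}}{1-q^c}Y_i+\tfrac{X_i}{z_0}]$.

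To establish this single-level identity I would repeat the proof of Proposition~\ref{thm-3} almost verbatim, the only change being that Proposition~\ref{prop-1-equiv} (equivalently its $g$-form \eqref{e-lastsec-1}) replaces Proposition~\ref{prop-1}. Since the $p_\lambda$ form a basis of $\Lambda_{\mathbb{Q}}$, by linearity it suffices to take each $f_{\lambda^{(i)}}=p_{\lambda_i}$. The multiplicativity rules \eqref{asm1} and \eqref{asm3} split each $p_{\lambda_i}[Y_iZ^{(1)}+X_i]$ into the $Z^{(1)}$-dependent piece $p_{\lambda_i}[\tfrac{1-q}{1-q^c}Y_i]\,p_{\lambda_i}[\tfrac{1-q^c}{1-q}Z^{(1)}]$ and the $Z^{(1)}$-free piece $p_{\lambda_i}[X_i]$; expanding the resulting product over $i$, each surviving product of power sums in $Z^{(1)}$ is rewritten as a sum of complete symmetric functions $h_\nu[\tfrac{1-q^c}{1-q}Z^{(1)}]$ via the determinant formula \eqref{e-det-ph}. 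The degree constraint forces $\ell(\nu)\leq|\nu|\leq|\lambda|\leq k_1-k_2$, so Proposition~\ref{prop-1-equiv} applies termwise and turns each $h_\nu[\tfrac{1-q^c}{1-q}Z^{(1)}]$ into $h_\nu[\tfrac{q^a-q^{c-b-1}}{1-q}]$ under the constant term. Reassembling the $h_\nu$ back into power sums and recombining with the $Y_i$- and $X_i$-pieces through \eqref{asm1} and \eqref{asm3} then yields the claimed evaluation $\prod_i p_{\lambda_i}[\tfrac{q^a-q^{c-b-1}}{1-q^c}Y_i+\tfrac{X_i}{z_0}]$.

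With the single-level identity in hand I would set $z_0=1$ and iterate over $s=1,\dots,n$, peeling off one block $z^{(s)}$ at a time exactly as in the proof of Theorem~\ref{thm-5}. The bookkeeping is governed by the effective ``$a$-parameter'': the output of level $s$ contributes the factor $\prod_{j=1}^{k_{s+1}}(1/z_j^{(s+1)})_{a+\sigma_s+s(1-c)}$, which merges with the level-$(s+1)$ weight $(1/z_i^{(s+1)})_{a_{s+1}}=1$ (recall $a_{s+1}=0$), so the parameter entering level $s+1$ is $a+\sigma_s+s(1-c)$ while its $b$-parameter is $b_{s+1}$. A short induction then shows that at level $s$ the effective $a$-parameter equals $a+\sigma_{s-1}+(s-1)(1-c)$, whence the single-level hypothesis $a+b+1\geq c$ reads $a+\sigma_{s-1}+(s-1)(1-c)+b_s+1\geq c$, i.e.\ $a+\sigma_s+s\geq sc$, which is exactly the standing hypothesis of the theorem; likewise $\sum_{j=1}^{m_s}|\lambda^{(s,j)}|\leq k_s-k_{s+1}$ is precisely what permits the single-level identity to be applied at level $s$. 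Each level contributes its own prefactor $(-1)^{k_s(b_s+1-c)}q^{k_s\binom{b_s+2-c}{2}}$, its own factor $M_{k_s}(a+\sigma_s+s(1-c),c-1,c)$, and the evaluations $f_{\lambda^{(s,j)}}[\tfrac{q^{a+\sigma_{s-1}+(s-1)(1-c)}-q^{c-b_s-1}}{1-q^c}Y^{(s)}_j+X^{(s)}_j]$; multiplying these over all $s$ produces the right-hand side of the theorem, the last level leaving the residual $\prod_{j=1}^{k_{n+1}}(1/z_j^{(n+1)})_{a+\sigma_n+n(1-c)}$.

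The single-level reduction is routine, being structurally identical to Proposition~\ref{thm-3}. The main obstacle is organizational: I must verify at every stage of the iteration that the substituted effective $a$-parameter stays consistent with the accumulating sign-and-power prefactor and with the shifting argument of the symmetric functions, and that specializing $z_0=1$ introduces no spurious contributions. The latter is guaranteed by the vanishing results of Section~\ref{s-split} that already underlie all the single-level identities.
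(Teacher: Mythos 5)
Your proposal is correct and follows essentially the same route as the paper: the paper proves Theorem~\ref{thm-main-2} by forming the general-$b$, $\Lambda_{\mathbb{Q}}$ single-level identity (the analogue of \eqref{e-thm3-1} obtained by rerunning the proof of Proposition~\ref{thm-3} with Proposition~\ref{prop-1-equiv} in place of Proposition~\ref{prop-1}) and then iterating it with $z_0=1$, exactly as you do. Your bookkeeping of the effective $a$-parameter $a+\sigma_{s-1}+(s-1)(1-c)$, the accumulated prefactors, and the translation of $a+b+1\geq c$ into $a+\sigma_s+s\geq sc$ matches the paper's argument.
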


\subsection*{Acknowledgements}

This work was supported by the
National Natural Science Foundation of China (No. 12171487).

\end{document}